\spnewtheorem{assumption}{Assumption}[section]{\it}{\rm}
\spnewtheorem{algorithm}{Algorithm}{\bf}{\rm}
\spnewtheorem{condition}{Condition}[section]{\it}{\rm}
\journalname{}
\begin{document}

\title{Error Analysis of an Approximate Optimal Policy for a Non-stationary Inventory System with Setup Costs }

%\subtitle{Do you have a subtitle?\\ If so, write it here}

%\titlerunning{Error Analysis for an Inventory System}        % if too long for running head
%\titlerunning{} 

\author{Jianyong Liu         \and
        Wei Geng \and
        Xiaobo Zhao.
}

%\authorrunning{Short form of author list} % if too long for running head

\institute{Jianyong Liu (Corresponding author) \at
              Institute of Applied Mathematics, Academy of Mathematics and System Science,
              Chinese Academy of Sciences, Beijing, 100080, China. \\
              % Tel.: +123-45-678910\\
              % Fax: +123-45-678910\\
              \email{liujy@amss.ac.cn}           %  \\
%             \emph{Present address:} of F. Author  %  if needed
           \and
           Wei Geng \at
              School of Economics and Management, Southwest Jiaotong University, Chengdu, 610031, China.
           \and
           Xiaobo Zhao \at
              Department of Industrial Engineering, Tsinghua University, Beijing, 100084, China.
}

%\date{Received: date / Accepted: date}
\date{}
% The correct dates will be entered by the editor

\maketitle

\begin{abstract}
In this paper, we consider a finite horizon non-stationary inventory system with setup costs. We detail an algorithm to find an approximate optimal policy and the basic idea is to use numerical procedure for computing integrals involved in the standard method. We provide analytical error bounds, which converge to zero, between the costs of an approximate optimal policy found in this paper and the optimal policy, which are the main contributions of this paper.  To the best of our knowledge, the error bound results are not found in the literature on finite horizon inventory systems with setup costs. A convergence result for the approximate optimal policy is also provided. In this paper, algorithms to find the above error bounds are also provided. Several numerical examples show that the performance of Algorithms in this paper is satisfactory.

\keywords{Inventory \and Stochastic and continuous demand \and Setup cost \and Approximate optimal policy \and Error bound}
% \PACS{PACS code1 \and PACS code2 \and more}
%\subclass{90B05 \and 90C40 \and 49M25}
\subclass{90B05}
\end{abstract}

\section{Introduction}
\label{SecIntro}
In the previous paper(see Geng et al. \cite{GengLiu12}), we discussed an inventory system without setup costs. In this paper, we consider the inventory system with setup costs and obtain analytical error bound results.

The problem of periodically replenishing inventories with stochastic demands is pervasive in retail, wholesale, industry, and service, among others. In many situations, the demand is heavily seasonal or has a significant trend, which requires a non-stationary inventory model. There has been much work on non-stationary inventory models, e.g., Bollapragada and Morton \cite{BollaMorton99}, Morton and Pentico \cite{MortonPenti95}, Sethi and Cheng \cite{SethiCheng97}, Sobel and Zhang \cite{SobelZhang01}, Veinott \cite{Veinott66a,Veinott66b} and Zipkin \cite{Zipkin00}.

In a periodic review inventory system, the demand in each period may be discrete or continuous. If the demand for a particular product is relatively large but the unit of that product is relatively small, then the demand and the stock levels may be described by continuous variables. For example, the sales volume(more than 1000 grams) of gold in a period of time in a store, as a random variable, may be described by a continuous random variable. In addition, it is common that the demand of a product closely follows the normal distribution. Federgruen and Zipkin \cite{FederZipkin85} indicate that inventory control problems are often best described using continuous demands and continuous inventory levels. Work on certain multi-echelon and multi-item systems has heightened interest in continuous-demand models, e.g., Federgruen and Zipkin \cite{FederZipkin84a,FederZipkin84b,FederZipkin84c}, Eppen and Schrage \cite{EppenSchrage81}, etc. Work on inventory systems with continuous demands or general demands includes that by Axs\"{a}ter \cite{Axsater06}, Ehrhardt \cite{Ehrhardt84}, Levi et al. \cite{LeviPal07}, Morton and Pentico \cite{MortonPenti95}, Sahin \cite{Sahin82}, Tsitsiklis \cite{Tsitsik84}, Veinott \cite{Veinott66a,Veinott66b}, and Zipkin \cite{Zipkin00}. In brief, inventory systems with continuous demands are worth studying.

It is well known that, under the appropriate conditions, there exists an optimal (s, S) policy for a finite horizon inventory system(the result also holds for an infinite horizon inventory system with discounted or average cost criterion). However, it is usually difficult to compute the exact optimal (s, S) policy when demands are continuous variables (see Remark \ref{Rmk2.1} in Section \ref{SecModel}). In this paper, we consider a finite horizon non-stationary inventory system with setup costs and independent stochastic demands that may be continuous. We find an approximate optimal (s, S) policy and provide analytical (mathematical) error bounds between the costs of the approximate optimal policy and the optimal policy. The latter constitutes the main contribution in this paper.

For infinite horizon inventory systems with discounted or average cost criterion, the literature involving methods for computing approximate optimal policies includes Ehrhardt \cite{Ehrhardt84}, Federgruen and Zipkin \cite{FederZipkin84c}, Freeland and Porteus \cite{FreelandPorteus80}, Lovejoy \cite{Lovejoy92}, Naddor \cite{Naddor75}, Porteus \cite{Porteus85}, Sahin and Sinha \cite{SahinSinha87} and Schneider and Ringuest \cite{SchneiderRinguest90}.

For finite horizon inventory systems without setup costs, some work has been done on methods for computing approximate optimal policies. Morton and Pentico \cite{MortonPenti95} show that the myopic policy and two near-myopic policies are the upper bounds of the optimal base-stock policy and a near-myopic policy is the lower bound of the optimal policy, and they propose heuristics for computing these bounds as approximate values of the optimal policy. Lovejoy \cite{Lovejoy92} gives an error bound between the costs of the myopic policy and the optimal policy. There is an example (Levi et al. \cite{LeviPal07}) in which the performance of the myopic policy is arbitrarily bad. Lu et al. \cite{LuSong06} consider an inventory system with demand forecasting updates. They provide error bounds between the cost of a given heuristic policy and the optimal cost, but the error bounds do not converge to zero. Levi et al. \cite{LeviPal07} propose an algorithm to compute a so-called dual-balancing policy as an approximate optimal policy using the marginal holding costs and penalty costs. Geng et al. \cite{GengLiu12} provide an analytical error bound between the cost of an approximate optimal policy found in their paper and that of the optimal policy and obtain a convergence result for the approximate optimal policy. However, the method in Geng et al. \cite{GengLiu12} cannot be applied to the inventory system in this paper (see Remark \ref{Rmk2.2} in Section \ref{SecModel} for details).

For the dynamic programming with continuous state space, continuous control space, and a finite set of the disturbance, Bertsekas \cite{Bertsekas75} discretizes the state and control spaces and obtains a sequence of discrete dynamic programming. Then, under certain continuous conditions, it is proved that the solutions of the programmings converge to the solution of the original problem. See also Fox \cite{Fox73} for similar results. It should be pointed out that in Bertsekas \cite{Bertsekas75} and Fox \cite{Fox73}, the cost functions must satisfy the continuity conditions which play the key role in their methods. But, for the inventory system with setup costs, as discussed in this paper, it is obvious that the cost functions do not satisfy the continuity conditions in Bertsekas \cite{Bertsekas75} and Fox \cite{Fox73}. Hence, their methods cannot be applied directly to the inventory system in this paper.

It is more difficult to find an approximate optimal policy for a finite horizon inventory system with setup costs than one without setup costs. There is not much literature to find approximate optimal policies for finite horizon inventory systems with setup costs. Levi et al. \cite{LeviPal07} propose an algorithm to compute a so-called triple-balancing policy as an approximate optimal policy using the marginal holding costs. They provide an error bound between the costs of the triple-balancing policy and the optimal policy, but the error bound does not converge to zero. Bollapragada and Morton \cite{BollaMorton99} propose a myopic heuristic for computing an approximate optimal policy, but they do not provide an analytical error bound between the costs of the approximate optimal policy and the optimal policy. Silver  \cite{Silver78} and Askin  \cite{Askin81} provide a heuristic approach, which  answers  the following questions: i)  is  it  time  to order ? ii)  how  long a period  should  the  replenishment  be expected  to cover ? iii)  how  large should  the  replenishment  be ? But they neither evaluate the performance of the approach nor discuss the error analysis of the approach in their papers.

It is noted that in Bollapragada and Morton \cite{BollaMorton99} and Morton and Pentico \cite{MortonPenti95}, some numerical examples illustrate the errors among the costs of the approximate optimal policies and the optimal cost, but there are no analytical (mathematical) error estimations. There are no convergence results for the approximate optimal policies in Bollapragada and Morton \cite{BollaMorton99}, Lu et al. \cite{LuSong06}, Levi et al. \cite{LeviPal07}, Lovejoy \cite{Lovejoy92} and Morton and Pentico \cite{MortonPenti95}, Silver  \cite{Silver78} and Askin  \cite{Askin81}. Our paper differs from these papers. We provide analytical error bounds, that converge to zero, between the costs of the approximate optimal policy and the optimal policy, which is the main contribution in this paper. To the best of our knowledge, the results on error bounds are not found in the literature on finite horizon inventory systems with setup costs. Then, we obtain the convergence result for the approximate optimal policy from the error analysis results.

In Section \ref{SecModel}, the model description is given. In Section \ref{SecAlgorithm}, we detail an algorithm to find an approximate optimal (s, S) policy. In Section \ref{SecErr}, analysis of the errors between the costs of the approximate optimal policy and the optimal policy is discussed, and the main results are obtained. Section \ref{SecErr} is the main contribution in this paper. The paper concludes with Section \ref{SecConc}.

\section{Model Description}
\label{SecModel}

We consider a finite-horizon non-stationary inventory system with setup costs and independent stochastic demands that may be continuous. Unsatisfied demands are fully backlogged. At the beginning of each period, on observing the inventory level which equals inventory on-hand minus backlogs, a decision about ordering quantity is made. Setup cost is incurred when an order is placed. The ordered goods are received immediately after the decision, i.e., the order delivery lead time is zero. (A model with a stochastic lead time can be transformed into the model with zero lead time. See Ehrhardt \cite{Ehrhardt84}). With the replenished inventory level, the system satisfies the demand during the period. Assume that all parameters can differ from period to period, which means a non-stationary situation. The objective is to minimize the total expected discounted cost.

\textbf{Notation}\\
$t$: Period $t$ in the planning horizon, $t = 0,1,2,\cdots,T$; \\
$\alpha$: Discount factor, $0<\alpha \leq 1$;\\
$c_t$: Unit ordering cost in period $t$, $t = 0,1,2,\cdots,T-1$; \\
$D_t$: Stochastic demand in period $t$
(Assume that the demands $D_0, D_1, D_2, \cdots, D_{T-1}$ are
independent non-negative random variables.);\\
$F_t$: Cumulative distribution function of $D_t$;\\
$K_t$: Setup cost in period $t$ and $K_t \geq 0$.\\

At the beginning of period $t$, the inventory level is $x_t$. With the order quantity decided, the inventory level is shifted to $y_t$. Therefore, $y_t$ equals $x_t$ if the order quantity is zero or is the sum of $x_t$ and the order quantity. With the inventory level $y_t$, the system satisfies demand during period $t$. At the beginning of period $T$, there is no order placed for the observed inventory level $x_T$.

Let $G_t(y)$ denote the expected cost in period $t$, including holding and penalty costs, where $y$ is the inventory level after receipt of order. Note that, in some literature,
\begin{equation*}
    G_t(y) \coloneqq E \left[ h_t \left( \left( y-D_t \right)^+ \right) + p_t \left( \left( D_t-y \right) ^+ \right) \right],
\end{equation*}
where $h_t(\cdot)$ and $p_t(\cdot)$ are the holding cost function and the penalty cost function, respectively, cf. Zipkin \cite{Zipkin00}, Lovejoy \cite{Lovejoy92} and Morton and Pentico \cite{MortonPenti95}.

Let $v_t(x)$ represent the minimal total expected discounted cost from period $t$ to $T$ with an initial inventory level $x$ at the beginning of period $t$. Let $v_T(x) \coloneqq -c_T x$ for any $x$. The factor $c_T$ is called the salvage value, cf. p.373 in Zipkin \cite{Zipkin00}.

%Let $\mathbb{R} \coloneqq ]-\infty,\infty [$. 
Let $\mathbb{R} \coloneqq (-\infty,\infty)$. For $x \in \mathbb{R}$ and $y \in \mathbb{R}$, $v_t(\cdot)$ satisfies the following functional equations for $t = 0,1,2,\cdots,T-1$:
\begin{align}
    v_t(x) & = \min_{y\geq x} \left\{ K_t \delta (y-x) + c_t (y-x) + G_t(y) + \alpha E \left[ v_{t+1} (y-D_t) \right] \right\}, \label{Equ2.1}\\
    v_T(x) & \coloneqq -c_T x,\label{Equ2.2}
\end{align}
where $\delta(z) \coloneqq 1$ if $z>0$ and $\delta(0) \coloneqq 0$.

Let for $x\in \mathbb{R}$ and $t=0,1,2,\cdots,T$,
\begin{equation}\label{Equ2.3}
    V^*_t(x) \coloneqq v_t(x) + c_t x,
\end{equation}
and for $y\in \mathbb{R}$ and $t=0,1,2,\cdots,T-1$,
\begin{equation}\label{Equ2.4}
    C_t(y) \coloneqq (c_t - \alpha c_{t+1})y + G_t(y) + \alpha c_{t+1} E[D_t].
\end{equation}

With the newly defined functions (\ref{Equ2.3}) and (\ref{Equ2.4}), the functional equations (\ref{Equ2.1}) and (\ref{Equ2.2}) are converted into the following equations for $x\in \mathbb{R}$ and $t=0,1,2,\cdots,T-1$:
\begin{align*}
    V^*_t(x) &= \min_{y \geq x} \left\{ K_t \delta(y-x) + C_t(y) + \alpha E \left[ V^*_{t+1} (y-D_t) \right] \right\},\\
    V^*_T(x) &= 0.
\end{align*}

For $y\in \mathbb{R}$ and $t=0,1,2,\cdots,T-1$, define
\begin{equation*}
    H^*_t(y) \coloneqq C_t(y) + \alpha E[V^*_{t+1}(y-D_t)].
\end{equation*}

%We can find the optimal policy $ \{ (s^*_t, S^*_t) \}$(see Theorem 2.1) by  %$H^*_t(x)$ .

\begin{assumption}\label{Asmp2.1}
\begin{description}
  \item[(a).] For $t=0,1,2,\cdots,T-1$, $C_t(y)$ is convex in $y$ with $C_t(y) \rightarrow +\infty$, as $|y| \rightarrow \infty$.
  \item[(b).] For $t=0,1,2,\cdots,T-2$, $K_t \geq \alpha K_{t+1}$.
\end{description}
\end{assumption}

Assumption \ref{Asmp2.1} is common in the literature, cf. Scarf \cite{Scarf60}, Veinott and Wagner \cite{VeinottWagner65}, Veinott \cite{Veinott66a}, Zipkin \cite{Zipkin00} and Sobel and Zhang \cite{SobelZhang01}. Assumption \ref{Asmp2.1} holds throughout this paper. Under Assumption 2.1, there exists the optimal policy  $ \{ (s^*_t, S^*_t) \}$   for the inventory system discussed in this paper(see Theorem 2.1). Then, we detail an algorithm to find the approximate optimal policy $ \{ (s_t, S_t) \}$(see Section 3), and discuss the error analysis and the convergence of the approximate optimal policy(see Section 4).     

For $K\geq 0$, a function $g: \mathbb{R} \rightarrow \mathbb{R}$ is said to be a $K$-convex function, if for any $x, y, z$ satisfying $x\leq y \leq z$,
\begin{equation*}
    g(y) \leq \tau g(x) + (1-\tau)(g(z)+K),
\end{equation*}
where $y=\tau x + (1-\tau) z$, $0\leq \tau \leq 1$.

From Scarf \cite{Scarf60} and Veinott \cite{Veinott66a}, we have the following known theorem.

\begin{theorem}\label{Thm2.1}
    For $t=0,1,2,\cdots,T-1$,
    \begin{description}
     \item[(a)] $H^*_t(x)$ and $V^*_t(x)$ are $K_t$-convex in $x$;
      \item[(b)] $ V^*_t(x)= \begin{cases} H^*_t(S^*_t)+K_t,& x< s^*_t,\\ H^*_t(x),& x\geq s^*_t, \end{cases} $\\
      where
      \begin{align}
          S^*_t & \coloneqq \min\{x|H^*_t(x)=\min_{y\in \mathbb{R}} H^*_t(y) \} \label{Equ2.5} \\
          s^*_t & \coloneqq \max\{x|H^*_t(x)=H^*_t(S^*_t)+K_t, x\leq S^*_t \}; \label{Equ2.6}
      \end{align}
      \item[(c)] the ordering policy $\pi^* \coloneqq \{ (s^*_t, S^*_t)|t=0,1,2,,\cdots,T-1 \} $ is optimal. The ordering policy $\pi^*$ means that for $0\leq t \leq T-1$, if $x_t = x < s^*_t$, order to $S^*_t$ and if $x \geq s^*_t$, do not order.
    \end{description}
\end{theorem}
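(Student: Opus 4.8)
The plan is to proceed by backward induction on $t$, using the algebra of $K$-convex functions. The structural facts I would establish first (or invoke as standard) are: (i) an ordinary convex function is $0$-convex, hence $K$-convex for every $K\ge 0$; (ii) if $\lambda\ge 0$ and $g$ is $K$-convex, then $\lambda g$ is $\lambda K$-convex, and if $g$ is $K_1$-convex and $h$ is $K_2$-convex, then $g+h$ is $(K_1+K_2)$-convex; (iii) if $g$ is $K$-convex, then $g(\cdot-d)$ is $K$-convex for each constant $d$, and expectations (mixtures) of $K$-convex functions are $K$-convex, so $y\mapsto E[g(y-D_t)]$ is $K$-convex; and (iv) if $K\le K'$, then a $K$-convex function is also $K'$-convex. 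These are the only tools needed.

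For part (a), the base case is $V^*_T\equiv 0$, which is $0$-convex and therefore $K$-convex for any $K\ge 0$. For the inductive step, suppose $V^*_{t+1}$ is $K_{t+1}$-convex. By (iii) the map $y\mapsto E[V^*_{t+1}(y-D_t)]$ is $K_{t+1}$-convex, so by (ii) the term $\alpha E[V^*_{t+1}(y-D_t)]$ is $\alpha K_{t+1}$-convex. Since $C_t$ is convex by Assumption~\ref{Asmp2.1}(a), hence $0$-convex, property (ii) gives that
\[
H^*_t(y)=C_t(y)+\alpha E[V^*_{t+1}(y-D_t)]
\]
is $\alpha K_{t+1}$-convex. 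By Assumption~\ref{Asmp2.1}(b) we have $\alpha K_{t+1}\le K_t$, so (iv) yields that $H^*_t$ is $K_t$-convex. It remains to propagate $K_t$-convexity to $V^*_t$, which I address together with part (b).

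For part (b), I would first note that the coercivity $C_t(y)\to+\infty$ as $|y|\to\infty$ forces $H^*_t(y)\to+\infty$ as well, so the minimum defining $S^*_t$ in (\ref{Equ2.5}) is attained and $S^*_t$ is well defined; $K_t$-convexity together with coercivity then guarantees that $s^*_t$ in (\ref{Equ2.6}) exists and satisfies $s^*_t\le S^*_t$. The heart of the argument is the standard consequence of $K_t$-convexity: for $x\ge s^*_t$ one has $H^*_t(x)\le H^*_t(y)+K_t$ for every $y\ge x$, so not ordering is optimal and $V^*_t(x)=H^*_t(x)$; whereas for $x<s^*_t$ one has $H^*_t(x)>H^*_t(S^*_t)+K_t$, so ordering up to $S^*_t$ is strictly optimal and $V^*_t(x)=H^*_t(S^*_t)+K_t$. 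This is exactly the claimed form. Finally, I would verify that the resulting $V^*_t$, equal to the constant $H^*_t(S^*_t)+K_t$ on $(-\infty,s^*_t)$ and to the $K_t$-convex function $H^*_t$ on $[s^*_t,\infty)$, is itself $K_t$-convex, which closes the induction for part (a).

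Part (c) is then immediate: by construction the policy $\pi^*$ attains the minimum in the functional equation for $V^*_t$ at every stage and every initial inventory level, so a routine backward verification shows that $\pi^*$ achieves $v_0$ and is therefore optimal. I expect the main obstacle to be the piecewise verification in part (b) that $V^*_t$ inherits $K_t$-convexity from $H^*_t$: checking the defining $K$-convexity inequality across the ordering threshold $s^*_t$, in particular when the three test points $x\le y\le z$ straddle $s^*_t$, is the delicate case, and it is precisely there that the positive constant $K_t$ (rather than $0$) and the definitions of $s^*_t$ and $S^*_t$ enter. All remaining steps are routine applications of properties (i)--(iv).
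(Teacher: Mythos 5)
Your proposal is correct, and it is precisely the classical Scarf--Veinott backward-induction argument: propagate $K_t$-convexity through the recursion using Assumption 2.1 ($C_t$ convex and $K_t \geq \alpha K_{t+1}$, so $H^*_t = C_t + \alpha E[V^*_{t+1}(\cdot - D_t)]$ is $\alpha K_{t+1}$-convex, hence $K_t$-convex), extract the $(s^*_t,S^*_t)$ structure from the standard properties of $K$-convex functions, and check that $V^*_t$ inherits $K_t$-convexity across the threshold $s^*_t$. The paper gives no proof of its own --- it states Theorem 2.1 as known and cites Scarf and Veinott, whose proofs run exactly as you outline --- so your argument matches the intended one; the only quibbles are that for $x < s^*_t$ ordering is weakly (not necessarily strictly) optimal, since $H^*_t(x) = H^*_t(S^*_t)+K_t$ may hold at points below $s^*_t$, and that coercivity of $H^*_t$ uses the inductive fact that $V^*_{t+1}$ is bounded below by $H^*_{t+1}(S^*_{t+1})$; neither affects the conclusion.
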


The standard method to find the optimal policy  $ \{ (s^*_t, S^*_t) \}$  is given in Theorem 2.1.

\begin{remark}\label{Rmk2.1}
    When the demands are continuous, it is usually quite difficult to compute the exact optimal policy $\pi^*=\{ (s^*_t, S^*_t)|t=0,1,2,,\cdots,T-1 \} $ using the standard method described by (\ref{Equ2.5}) and (\ref{Equ2.6}) because the numerical method for computing integrals involved in the standard method may have to be used, which will lead to errors (see Example \ref{Exm3.1}).
\end{remark}

\begin{remark}\label{Rmk2.2}
    Geng et al. \cite{GengLiu12} discuss the case without setup costs. In that case, $V^*_t(x)$ is increasing in $x$. Based on the monotonicity of $V^*_t(x)$, they construct the upper and lower bounds of the optimal cost $v_t(x)$ and obtain the analytical error bound between the costs of the approximate optimal policy and the optimal policy. Then, the convergence result for the approximate optimal policy is given. Note that the monotonicity of $V^*_t(x)$ plays a key role in their method.

    However, in our paper, it is easy to see that $V^*_t(x)$ does not possess the monotonicity when $K_t > 0$. Hence, the method in Geng et al. \cite{GengLiu12} cannot be applied to our inventory system in this paper.
\end{remark}

\section{An Algorithm to Find an Approximate Optimal Policy}
\label{SecAlgorithm}

In this section, we detail an algorithm to find the approximate values of $s^*_t$ and $S^*_t$ (see Theorem \ref{Thm2.1}), namely $s_t$ and $S_t$. The basic idea is to use numerical procedure for computing integrals involved in the standard method described by Theorem \ref{Thm2.1}.

Let $Z_{\theta} \coloneqq \{ z_m | z_m = m\theta, m=0,\pm1,\pm2,\cdots \}$, where $\theta >0$.

\begin{definition}\label{Def3.1}
    For $t=0,1,2,\cdots,T-1$, define
    \begin{align}
        C^m_t & \coloneqq \min \{ y| C_t(y)=\min_{x\in \mathbb{R}} C_t(x) \}; \label{Equ3.1}\\
        S^U_t & \coloneqq \min \{ z_m| C_t(z_m) > C_t(z_{n_0})+K_t,\ \  z_m \geq C^m_t, z_m \in Z_{\theta} \}, \label{Equ3.2}
    \end{align}
    where $z_{n_0} < C^m_t \leq z_{n_0+1}$.

    Let $\bar{S}_0 \coloneqq S^U_0 $.  For $t=1,2,\cdots,T-2$, define
    \begin{equation}\label{Equ3.3}
     \bar{S}_t \coloneqq \max (S^U_t, \bar{S}_{t-1}+\theta).
    \end{equation}
\end{definition}

\begin{definition}\label{Def3.2}
   Take $\bar{Y}_{T-1} $  satisfying
     \begin{align}
        C_{T-1}(\bar{Y}_{T-1})=C_{T-1}(C^m_{T-1})+K_{T-1}, \; \bar{Y}_{T-1}\leq C^m_{T-1} ,\label{Equ3.4}
    \end{align}
define $s_{T-1} \coloneqq  \bar{Y}_{T-1} $.
Define
    \begin{align}
        \bar{I}_{T-1} \coloneqq s_{T-1}.\nonumber
    \end{align}

    For $t=0,1,2,\cdots,T-2$, define
    \begin{align}
        I_t & \coloneqq \max \{ z_m| z_m < \min(\bar{I}_{t+1}-\theta,\;C^m_t), \; z_m \in Z_{\theta} \},\label{Equ3.5}\\
        \bar{I}_t & \coloneqq \max \{ z_m| C_t(z_m)>C_t(I_t)+K_t,\; z_m\leq I_t, z_m\in Z_{\theta} \} + \theta. \label{Equ3.6}
    \end{align}
\end{definition}

With the above definitions, we can define $I_{T-2}$, $\bar{I}_{T-2}$, $I_{T-3}$, $\bar{I}_{T-3}$, $\cdots$, $I_{0}$ and $\bar{I}_{0}$ in order. Obviously, $I_t < C^m_t \leq S^U_t$ and $\bar{I}_t \leq I_t$,  $0\leq t \leq T-2$. 

Via the following  $V_t(y)$  and $ H_t(y)$(see Definition 3.3), we can obtain 
approximate values $ s_t$ and $ S_t $ of  $s^*_t$ and $S^*_t$(see Theorem 2.1).

\begin{definition}\label{Def3.3}
    For $y\in \mathbb{R}$, define $H_{T-1}(y) \coloneqq C_{T-1}(y)$. Define $S_{T-1} \coloneqq C^m_{T-1}$, and
    \begin{equation*}
        V_{T-1}(y) \coloneqq \begin{cases} H_{T-1}(S_{T-1}) + K_{T-1},& y<s_{T-1},\\
         H_{T-1}(y),& y\geq s_{T-1}. \end{cases}
    \end{equation*}
    For $t=0,1,2,\cdots,T-2$ and $y\in \mathbb{R}$, define
    \begin{align}
        H_t(y) & \coloneqq C_t(y)+\alpha \sum_{n=-1}^{\infty} V_{t+1}(y-z_n)f_t(n),\nonumber\\
        V_t(y) & \coloneqq \begin{cases} H_t(S_t) + K_t,& y<s_t,\\ H_t(y),& y\geq s_t, \end{cases} \label{Equ3.7}
    \end{align}
    where $f_t(n) \coloneqq F_t(z_{n+1}) - F_t(z_n)$,
    \begin{equation}\label{Equ3.8}
        S_t \coloneqq \max\left\{ z_m\Bigg| H_t(z_m) = \min_{I_t\leq z_n\leq S^U_t} H_t(z_n),\; I_t\leq z_m\leq S^U_t,\; z_m\in Z_{\theta} \right\},
    \end{equation}
    and
    \begin{equation}\label{Equ3.9}
        s_t \coloneqq \begin{cases} S_t,& K_t=0,\\\min \{ z_m|H_t(z_m)\leq H_t(S_t)+K_t,\; \bar{I}_t\leq z_m\leq S_t,\; z_m\in Z_{\theta} \},& K_t>0. \end{cases}
    \end{equation}
\end{definition}

With the above definitions, we can define $H_{T-2}(y)$, $S_{T-2}$, $s_{T-2}$, $V_{T-2}(y)$, $H_{T-3}(y)$, $S_{T-3}$, $s_{T-3}$, $V_{T-3}(y)$, $\cdots$, $H_{0}(y)$, $S_{0}$, $s_{0}$, and $V_{0}(y)$ in order.

\begin{definition}\label{Def3.4}
     For $K\geq 0$, a function $g:\mathbb{R} \rightarrow \mathbb{R}$ is said to be a sub-$K$-convex function, if for any $x$, $y$, and $z_m \in Z_{\theta}$
satisfying $x\leq y\leq z_m$,
   \begin{equation*}
        g(y)\leq \tau g(x) + (1-\tau)(g(z_m)+K),
    \end{equation*}
    where $y=\tau x + (1-\tau)z_m$, $0\leq \tau \leq 1$.
\end{definition}

\begin{lemma}\label{Lem3.1}
    For $z_m \in Z_{\theta}$, $z_m \leq I_t$, and $t=0,1,2,\cdots,T-2$,
    \begin{equation*}
        H_t(z_m) \geq H_t(I_t).
    \end{equation*}
\end{lemma}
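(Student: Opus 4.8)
The plan is to split $H_t$ according to Definition \ref{Def3.3}, writing $H_t(y) = C_t(y) + \alpha \sum_{n=-1}^{\infty} V_{t+1}(y-z_n) f_t(n)$, and to control the two pieces separately on the set of grid points $z_m \leq I_t$. For the deterministic piece I would invoke Assumption \ref{Asmp2.1}(a): since $C_t$ is convex and $C^m_t$ is its smallest minimizer (see (\ref{Equ3.1})), $C_t$ is non-increasing on $(-\infty, C^m_t]$; because $I_t < C^m_t$ (noted immediately after Definition \ref{Def3.2}), every $z_m \leq I_t$ satisfies $C_t(z_m) \geq C_t(I_t)$. The crux is then to show that the expectation piece $\sum_{n=-1}^{\infty} V_{t+1}(y-z_n) f_t(n)$ takes one and the same value at $y=z_m$ (for every $z_m \leq I_t$) and at $y=I_t$.

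The key observation I would establish is that all the shifted arguments fall strictly inside the flat part of $V_{t+1}$. From (\ref{Equ3.5}) we have $I_t < \bar{I}_{t+1} - \theta$, hence $I_t + \theta < \bar{I}_{t+1}$; and $\bar{I}_{t+1} \leq s_{t+1}$ (for $t+1 = T-1$ this is the equality $\bar{I}_{T-1} = s_{T-1}$ from Definition \ref{Def3.2}, while for $t+1 \leq T-2$ it follows from the constraint $\bar{I}_{t+1} \leq z_m$ in (\ref{Equ3.9}), together with $s_{t+1} = S_{t+1} \geq I_{t+1} \geq \bar{I}_{t+1}$ in the case $K_{t+1}=0$). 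Therefore $I_t + \theta < s_{t+1}$. Since $z_n = n\theta \geq -\theta$ for $n \geq -1$, every argument obeys $z_m - z_n \leq I_t + \theta < s_{t+1}$ whenever $z_m \leq I_t$, so by (\ref{Equ3.7}) each summand equals the constant $H_{t+1}(S_{t+1}) + K_{t+1}$. Using $\sum_{n=-1}^{\infty} f_t(n) = 1 - F_t(-\theta) = 1$ (the demand being non-negative), the whole expectation piece equals $H_{t+1}(S_{t+1}) + K_{t+1}$ for every $z_m \leq I_t$, and in particular agrees with its value at $I_t$.

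Combining the two steps yields $H_t(z_m) = C_t(z_m) + \alpha \left( H_{t+1}(S_{t+1}) + K_{t+1} \right) \geq C_t(I_t) + \alpha \left( H_{t+1}(S_{t+1}) + K_{t+1} \right) = H_t(I_t)$, which is the claim; no induction on $t$ is actually needed, only the already recorded facts $I_t < C^m_t$ and $\bar{I}_t \leq I_t$ together with the definitions. The delicate step, and the one I would verify most carefully, is the chain $I_t + \theta < \bar{I}_{t+1} \leq s_{t+1}$: it is precisely what guarantees that no shifted argument $z_m - z_n$ ever reaches $s_{t+1}$, so that the kink of $V_{t+1}$ never enters the sum and the expectation term stays constant. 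I would also separately confirm the boundary case $t+1 = T-1$, where $V_{T-1}$ and $\bar{I}_{T-1} = s_{T-1}$ are supplied by Definition \ref{Def3.2} rather than by the recursive formulas, to make sure the same inequality holds there.
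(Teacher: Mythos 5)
Your proof is correct and follows essentially the same route as the paper's: both rest on the chain $I_t + \theta < \bar{I}_{t+1} \leq s_{t+1}$ to conclude that all arguments $z_m - z_n$ lie below $s_{t+1}$, so the expectation term collapses to the constant $\alpha\left[H_{t+1}(S_{t+1}) + K_{t+1}\right]$, after which convexity of $C_t$ and $I_t < C^m_t$ finish the argument. The only difference is that you verify explicitly (and correctly) the steps the paper asserts ``by the definition of $I_t$,'' including the boundary case $\bar{I}_{T-1} = s_{T-1}$ and the case $K_{t+1}=0$ in (\ref{Equ3.9}).
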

\begin{proof}
    By the definition of $I_t$,
    \begin{equation*}
        I_t + \theta < \bar{I}_{t+1} \leq s_{t+1}.
    \end{equation*}

    Therefore, for $z_m \leq I_t$,
    \begin{equation*}
        H_t(z_m) = C_t(z_m) + \alpha [ H_{t+1}(S_{t+1}) + K_{t+1} ].
    \end{equation*}

    By the definition of $I_t$, $I_t<C^m_t$. Because $C_t(y)$ is convex in $y$, it holds that for $z_m \leq I_t$
    \begin{equation*}
        C_t(z_m) \geq C_t(I_t).
    \end{equation*}

    Therefore, for $z_m \leq I_t$ and $z_m \in Z_{\theta}$,
    \begin{equation*}
        H_t(z_m) \geq C_t(I_t) + \alpha [ H_{t+1}(S_{t+1}) + K_{t+1} ] = H_t(I_t).
    \end{equation*}
    \qed
\end{proof}

The following theorem characterizes the properties of functions $H_t(x)$ and $V_t(x)$.

\begin{theorem}\label{Thm3.1}
    For $t=0,1,2,\cdots,T-1$,
    \begin{description}
      \item[(a)] $H_t(x)$ and $V_t(x)$ are sub-$K_t$-convex functions in $x$;
       \item[(b)] $H_t(S_t) \leq \inf\limits_{z_j \in Z_{\theta}} H_t(z_j)$.
    \end{description}
\end{theorem}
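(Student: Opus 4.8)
The plan is to prove both parts simultaneously by backward induction on $t$, from $t=T-1$ down to $t=0$, and within each step to establish the statements in the order: first $H_t$ sub-$K_t$-convex, then part (b), then $V_t$ sub-$K_t$-convex. This ordering is essential, because the sub-$K_t$-convexity of $V_t$ relies on $S_t$ being a \emph{global} grid-minimizer of $H_t$, which is precisely part (b), while part (b) in turn uses the sub-$K_t$-convexity of $H_t$ together with part (b) already known at stage $t+1$. The base case $t=T-1$ is immediate: $H_{T-1}=C_{T-1}$ is convex by Assumption \ref{Asmp2.1}(a), hence $K$-convex and therefore sub-$K_{T-1}$-convex; part (b) is trivial since $S_{T-1}=C^m_{T-1}$ is by definition the global minimizer of $C_{T-1}=H_{T-1}$; and the sub-$K_{T-1}$-convexity of $V_{T-1}$ follows from the standard $(s,S)$ construction.

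For part (a) at a general $t$, I would first record that translation by a grid point preserves sub-$K$-convexity: if $g$ is sub-$K_{t+1}$-convex then so is $y\mapsto g(y-z_n)$, since for $z_m\in Z_\theta$ the shifted triple $x-z_n\le y-z_n\le z_m-z_n$ again has right endpoint $z_m-z_n\in Z_\theta$ and the same $\tau$. Nonnegative combinations add the convexity constants, so $\sum_n V_{t+1}(\,\cdot\,-z_n)f_t(n)$ is sub-$(K_{t+1}\sum_n f_t(n))$-convex, which equals sub-$K_{t+1}$-convex because $\sum_{n\ge -1}f_t(n)=1$. Adding the convex (that is, sub-$0$-convex) term $C_t$ and the factor $\alpha$ shows $H_t$ is sub-$(\alpha K_{t+1})$-convex, and since $\alpha K_{t+1}\le K_t$ by Assumption \ref{Asmp2.1}(b), $H_t$ is sub-$K_t$-convex. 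Once part (b) is in hand, $V_t$ is then treated by the same $(s,S)$-construction argument as in the base case.

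The crux is part (b): that $H_t(S_t)=\inf_{z_j\in Z_\theta}H_t(z_j)$, i.e. the grid-infimum is attained inside $[I_t,S^U_t]$. For grid points $z_j\le I_t$ this is exactly Lemma \ref{Lem3.1}, since $I_t\in[I_t,S^U_t]$ yields $H_t(z_j)\ge H_t(I_t)\ge H_t(S_t)$. The genuinely hard direction is $z_j>S^U_t$, and this is where I expect the main obstacle to lie. My approach is a two-sided estimate on the grid. Since every argument $z_j-z_n$ is itself a grid point, part (b) at $t+1$ gives $V_{t+1}(z_j-z_n)\ge H_{t+1}(S_{t+1})$, whence $H_t(z_j)\ge C_t(z_j)+\alpha H_{t+1}(S_{t+1})$ for all grid $z_j$; and for $z_j>S^U_t\ge C^m_t$ the convexity of $C_t$ gives $C_t(z_j)\ge C_t(S^U_t)$.

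For the matching upper bound I would evaluate $H_t$ at the grid point $z_{n_0}$, which lies in $[I_t,S^U_t]$, so that $H_t(S_t)\le H_t(z_{n_0})$. Here every argument entering $H_t(z_{n_0})$ is a grid point no larger than $z_{n_0+1}$, on which $V_{t+1}\le H_{t+1}(S_{t+1})+K_{t+1}$, giving $H_t(z_{n_0})\le C_t(z_{n_0})+\alpha H_{t+1}(S_{t+1})+\alpha K_{t+1}$. Subtracting the two bounds, $H_t(z_j)-H_t(S_t)\ge C_t(S^U_t)-C_t(z_{n_0})-\alpha K_{t+1}>K_t-\alpha K_{t+1}\ge 0$, where the strict inequality is the defining property of $S^U_t$ in (\ref{Equ3.2}) and the final step is Assumption \ref{Asmp2.1}(b). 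The delicate point, and the reason for the auxiliary quantities $I_t$, $\bar I_t$, and $\bar S_t$, is to guarantee that the demand-shifted arguments appearing in $H_t(z_{n_0})$ remain within the range on which the bound $V_{t+1}\le H_{t+1}(S_{t+1})+K_{t+1}$ is valid; verifying this alignment across successive periods is the part I would expect to demand the most care.
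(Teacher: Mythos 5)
Your induction architecture (prove sub-$K_t$-convexity of $H_t$, then part (b), then sub-$K_t$-convexity of $V_t$), your expansion of part (a), and your handling of the region $z_j\le I_t$ via Lemma \ref{Lem3.1} all coincide with the paper's proof. The gap is in the region $z_j\ge S^U_t$, and it is a real one, not merely a detail needing ``care.'' Your upper bound on $H_t(z_{n_0})$ requires $V_{t+1}(y)\le H_{t+1}(S_{t+1})+K_{t+1}$ for \emph{every} grid point $y=z_{n_0}-z_n$, $n\ge -1$, i.e.\ for every grid $y\le z_{n_0+1}$. That inequality holds for $y<s_{t+1}$ (with equality) and, via sub-$K_{t+1}$-convexity and (\ref{Equ3.9}), for grid $y$ with $s_{t+1}\le y\le S_{t+1}$; but for grid $y>S_{t+1}$ it asserts $H_{t+1}(y)\le H_{t+1}(S_{t+1})+K_{t+1}$, which is false in general, since $H_{t+1}(y)\ge C_{t+1}(y)+\alpha H_{t+2}(S_{t+2})\rightarrow\infty$ as $y\rightarrow\infty$ (and $H_{T-1}=C_{T-1}\rightarrow\infty$). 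Whenever $z_{n_0+1}>S_{t+1}$ --- e.g.\ large demand in period $t$ and small demand in period $t+1$, so that $C^m_t\gg S_{t+1}$ --- your bound fails at arguments near $z_{n_0+1}$. Moreover, the alignment you hope the quantities $I_t$, $\bar I_t$, $\bar S_t$ provide does not exist: the construction only guarantees $I_t+\theta<\bar I_{t+1}\le s_{t+1}$, which controls the region $z_m\le I_t$ where $H_t(z_m)=C_t(z_m)+\alpha[H_{t+1}(S_{t+1})+K_{t+1}]$ (this is exactly Lemma \ref{Lem3.1}); nothing in the definitions relates $z_{n_0}$, which is determined by $C^m_t$ alone, to $S_{t+1}$.

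The paper avoids this by never separating the two evaluations of $V_{t+1}$: it first establishes the \emph{pairwise} bound (\ref{Equ3.11}), namely $V_{t+1}(z_j)-V_{t+1}(z_n)\ge -K_{t+1}$ for grid $z_j\ge z_n$ (proved from the sub-$K_{t+1}$-convexity of $H_{t+1}$, part (b) at $t+1$, and the definitions (\ref{Equ3.9}) and (\ref{Equ3.4})), and then applies it inside the sum to the pairs $(z_j-z_n,\,z_{n_0}-z_n)$, which share the same shift $z_n$. This yields $H_t(z_j)-H_t(z_{n_0})\ge C_t(z_j)-C_t(z_{n_0})-\alpha K_{t+1}$, after which (\ref{Equ3.2}) and Assumption \ref{Asmp2.1}(b) finish exactly as in your arithmetic. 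What your decomposition loses is a correlation: when $H_{t+1}(z_{n_0}-z_n)$ is large (destroying your absolute upper bound), $H_{t+1}(z_j-z_n)$ at the larger argument $z_j-z_n\ge z_{n_0}-z_n$ is also large to within $K_{t+1}$, so the paired difference survives even though the absolute bound does not. Replacing your two one-sided absolute bounds by (\ref{Equ3.11}) repairs the proof; as written, the step $H_t(z_{n_0})\le C_t(z_{n_0})+\alpha H_{t+1}(S_{t+1})+\alpha K_{t+1}$ is unjustified and in general false.
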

\begin{proof}
    We use the induction method. It is easy to verify that the proposition holds for $t=T-1$.  Assume the proposition holds for $t + 1 (0\leq t \leq T-2)$. From the induction hypothesis and Assumption \ref{Asmp2.1}, it is easy to verify that $H_t(x)$ is a sub-$K_t$-convex function in $x$.

    Because $C_t(x)$ is convex and $S^U_t \geq C^m_t$,
    \begin{equation}\label{Equ3.10}
        C_t(z_j) \geq C_t(S^U_t), \ \ \  z_j \geq S^U_t .
    \end{equation}

    From Formula (\ref{Equ3.7}), the induction hypothesis, Formulas (\ref{Equ3.9}) and (\ref{Equ3.4}), it is easy to verify that
    \begin{equation}\label{Equ3.11}
        V_{t+1}(z_j) - V_{t+1}(z_n) \geq -K_{t+1}, \ \ \  z_j \geq z_n .
    \end{equation}

    Let $z_{n_0} < C^m_t \leq z_{n_0+1}$. From Formula (\ref{Equ3.11}), Assumption \ref{Asmp2.1}, (\ref{Equ3.10}) and (\ref{Equ3.2}), for $z_j \geq S^U_t$,
    \begin{align*}
        H_{t}(z_j) - H_{t}(z_{n_0}) & = C_{t}(z_j) - C_{t}(z_{n_0}) \\
        & \quad + \alpha \sum_{n=-1}^{\infty} \left[ V_{t+1}(z_j-z_n) - V_{t+1}(z_{n_0}-z_n) \right] f_t(n) \\
        & \geq C_{t}(z_j) - C_{t}(z_{n_0}) - \alpha K_{t+1} \\
        & \geq C_{t}(z_j) - C_{t}(z_{n_0}) - K_{t} \\
        & \geq C_{t}(S^U_t) - C_{t}(z_{n_0}) - K_{t} >0.
    \end{align*}
    Therefore, $H_{t}(z_j) > H_{t}(z_{n_0})$,  $z_j \geq S^U_t$.

    Combined with Lemma \ref{Lem3.1}, it holds that $H_t(S_t) = \inf\limits_{z_j \in Z_{\theta}} H_t(z_j)$.

    Similar to the proof of Theorem 9.5.3 in Zipkin \cite{Zipkin00}, we have that $V_t(x)$ is a sub-$K_t$-convex function in $x$. 
Therefore, the proposition also holds for $t$. \qed
\end{proof}

 The algorithm for computing $s_t$ and $S_t$ is detailed below. From the definitions of $H_t(x)$ and $V_t(x)$, for $t=0,1,2,\cdots,T-2$,
\begin{equation}\label{Equ3.12}
    H_t(x) = \begin{cases} C_t(x) + \alpha [ H_{t+1}(S_{t+1}) + K_{t+1} ],& x<s_{t+1}-\theta,\\
     \begin{split} C_t(x) + \alpha [ H_{t+1}(S_{t+1}) + K_{t+1} ]\left( 1-F_t(z_{m+1}) \right) \\ + \alpha \sum_{n=-1}^{m}H_{t+1}(x-z_n)f_t(n),\end{split}& x\geq s_{t+1}-\theta,\end{cases}
\end{equation}
where $z_m \leq x-s_{t+1} \leq z_{m+1}$.

By Formula (\ref{Equ3.12}), we give the following Algorithm 3.1 to find all $s_t$ and $S_t$.

% Algorithm 3.1
\section*{Algorithm 3.1} 
\begin{description}
      \item[Step 0.] Compute all $C^m_t$ and $S^U_t$ ($0 \leq t \leq T-1$) by Formulas (\ref{Equ3.1}) and (\ref{Equ3.2}). Compute $s_{T-1}$ by (\ref{Equ3.4}) and $H_{T-1}(S_{T-1})$. Compute all $I_t$, $\bar{I}_t$ and $\bar{S}_t$ ($0 \leq t \leq T-2$) by (\ref{Equ3.5}), (\ref{Equ3.6}) and (\ref{Equ3.3}). $t \Leftarrow T-2$.
      \item[Step 1.] If $t<0$, stop. Otherwise, compute all $H_t(z_n)$ by (\ref{Equ3.12}), $\bar{I}_t \leq z_n \leq \bar{S}_t$, $z_n \in Z_{\theta}$. Compute $S_t$ and $s_t$ by (\ref{Equ3.8}) and (\ref{Equ3.9}). $t\Leftarrow t-1$, and go to Step 1.
    \end{description}

From $\{ s_t,S_t\}$ found by Algorithm 3.1, we can construct an ordering policy.
We call $\pi = \{ (s_t,S_t)|t=0,1,2,\cdots,T-1 \}$ a policy, which means that for $0 \leq t \leq T-1$, if $x_t=x<s_t$, order to $S_t$, and if $x \geq s_t$ do not order. The cost $v_t^{\pi} (x)$ denotes the total expected discounted cost from period $t$ to $T$ with an initial inventory level $x$ at the beginning of period $t$ using the policy $\pi$.

% The policy $\pi$ can be regarded as an approximation of the optimal policy $\pi^*$.

Example \ref{Exm3.1} below shows that the policy $\pi$ found by Algorithm 3.1  may be a bad policy. On the other hand, in the next section, we shall prove that the policies found by Algorithm 3.1 converge to an optimal policy as $\theta \rightarrow 0$. For that, we call the policy $\pi$  found by Algorithm 3.1 an approximate optimal policy(sometimes the policy $\pi$ is called an approximate policy for short).

\begin{example}\label{Exm3.1}
    Consider a finite horizon inventory system with $T=3$, the salvage value $c_3=1$ and the discounted factor $\alpha = 1$. Other parameters are listed in Table \ref{Table3.1}. In period $t$, the demand distribution function is given as
    \begin{equation*}
        F_t(x) \coloneqq \begin{cases} 0,& x<0,\\
         \bar{K}_t x,& 0\leq x\leq 1/\bar{K}_t, \\
         1,& x> 1/\bar{K}_t, \end{cases}
    \end{equation*}
    where $\bar{K}_0=8$, $\bar{K}_1=3$, and $\bar{K}_2=1$.

    \begin{table}
        \centering
        \caption{Cost parameters in the inventory system}
        \label{Table3.1}
        \begin{tabular}{lccc}
        \hline\noalign{\smallskip}
        Period $t$ & 0 & 1 & 2  \\
        \noalign{\smallskip}\hline\noalign{\smallskip}
        The unit ordering cost $c_t$ & 1 & 1 & 1\\
        The unit holding cost $h_t$ & 1 & 1 & 1\\
        The unit penalty cost $p_t$ & 9 & 4 & 3\\
        Setup cost $K_t$ & 1 & 1 & 1\\
        \noalign{\smallskip}\hline
        \end{tabular}
    \end{table}

    For $\theta = 0.3$, using Algorithm 3.1, we can find an approximate policy
    \begin{equation*}
        \pi = \{ (s_0,S_0),(s_1,S_1),(s_2,S_2) \} =\{(0.3, 0.3), (0.3, 0.6), (0.0429, 0.75)\}.
    \end{equation*}
    Adopting the policy $\pi$, the total expected discounted cost is $v_0^{\pi}(0)=3.994$, and the minimum total expected discounted cost is $v_0(0)= 3.2916$. Thus, the relative error between the costs $v_0^{\pi}(0)$ and $v_0(0)$ is
    \begin{equation*}
        R_{\pi}(0) = \left|\frac{v_0^{\pi}(0)-v_0(0)}{v_0(0)}\right| =21.3\%.
    \end{equation*}

    Consequently, for $\theta = 0.3$, the policy $\pi$ found by Algorithm 3.1  is a bad and unsatisfactory policy.
\end{example}

\section{Error Analysis between $v_t^{\pi}(x)$ and $v_t(x)$}
\label{SecErr}

In this section, we provide the analytical error bounds between the costs of the approximate optimal policy $\pi$ and the optimal policy, which is the main contribution in this paper. We also show the convergence of the approximate optimal policy $\pi$ to the optimal policy as $\theta \rightarrow 0$.

In this section, our method is completely different from the method in Geng et al. \cite{GengLiu12}.  

\begin{definition}\label{Def4.1}
    For $t=0,1,\cdots,T-2$ and $x\in \mathbb{R}$, define $\epsilon_{T-1}(x) \coloneqq 0$, and
    \begin{equation*}
        \epsilon_{t}(x) \coloneqq \begin{cases} \alpha A_{t+1}(S_t) + \alpha E[\epsilon_{t+1}(S_t-D_t)], &x<s_t,\\
             \alpha A_{t+1}(x) + \alpha E[\epsilon_{t+1}(x-D_t)], &x\geq s_t, \end{cases}
    \end{equation*}
    where
    \begin{equation*}
        A_{t}(x) \coloneqq E[V_{t}(x-D_{t-1})] - \sum_{n=-1}^{\infty} V_{t}(x-z_n)f_{t-1}(n), \ \ \ \ \ t=1,\cdots,T-1.
    \end{equation*}
\end{definition}

\begin{lemma}\label{Lem4.1}
    For $t=0,1,\cdots,T-1$ and $x\in \mathbb{R}$, $v_t^{\pi}(x)=V_t(x) - c_t x + \epsilon_t (x)$.
\end{lemma}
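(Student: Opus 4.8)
The plan is to prove the identity by backward induction on $t$, driving everything from the recursion that the policy $\pi$ imposes on $v_t^{\pi}$. For the base case $t=T-1$ I would start from the terminal condition $v_T^{\pi}(x)=v_T(x)=-c_Tx$ and write out $v_{T-1}^{\pi}(x)$ directly from the meaning of $\pi$: for $x<s_{T-1}$ the system orders up to $S_{T-1}$, paying $K_{T-1}+c_{T-1}(S_{T-1}-x)+G_{T-1}(S_{T-1})$ plus the discounted continuation, and for $x\ge s_{T-1}$ it does not order. Substituting $\alpha E[v_T^{\pi}(y-D_{T-1})]=-\alpha c_Ty+\alpha c_TE[D_{T-1}]$ and regrouping the linear and constant terms via the definition (\ref{Equ2.4}) of $C_{T-1}$, everything collapses into $C_{T-1}=H_{T-1}$, giving $v_{T-1}^{\pi}(x)=V_{T-1}(x)-c_{T-1}x$, which matches the claim since $\epsilon_{T-1}\equiv 0$.

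For the inductive step, assume $v_{t+1}^{\pi}(x)=V_{t+1}(x)-c_{t+1}x+\epsilon_{t+1}(x)$ for all $x$. The crucial observation is that both branches of $\pi$ reduce to evaluating the single quantity $G_t(y)+\alpha E[v_{t+1}^{\pi}(y-D_t)]$ at the order-up-to level ($y=S_t$ when $x<s_t$, and $y=x$ when $x\ge s_t$). I would compute this quantity once: insert the induction hypothesis, use the definition of $A_{t+1}$ in Definition \ref{Def4.1} to rewrite $E[V_{t+1}(y-D_t)]$ as $\sum_{n=-1}^{\infty}V_{t+1}(y-z_n)f_t(n)+A_{t+1}(y)$, and eliminate $G_t$ using (\ref{Equ2.4}). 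The discretized sum together with $C_t(y)$ reassembles exactly into $H_t(y)$ (Definition \ref{Def3.3}), while the residual terms collect into $-c_ty+\alpha A_{t+1}(y)+\alpha E[\epsilon_{t+1}(y-D_t)]$.

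The two cases then close the argument. For $x\ge s_t$ I set $y=x$, and since $V_t(x)=H_t(x)$ there, the residual is precisely $\epsilon_t(x)$, so $v_t^{\pi}(x)=H_t(x)-c_tx+\epsilon_t(x)=V_t(x)-c_tx+\epsilon_t(x)$. For $x<s_t$ I set $y=S_t$ and add the ordering charge $K_t+c_t(S_t-x)$; the two $c_tS_t$ contributions cancel, leaving $H_t(S_t)+K_t-c_tx+\alpha A_{t+1}(S_t)+\alpha E[\epsilon_{t+1}(S_t-D_t)]$, which is exactly $V_t(x)-c_tx+\epsilon_t(x)$ by the first branch of (\ref{Equ3.7}) and Definition \ref{Def4.1}.

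I expect the only real obstacle to be bookkeeping rather than anything conceptual: one must track the linear term $-(c_t-\alpha c_{t+1})y$ hidden inside $C_t$ together with the $-\alpha c_{t+1}y$ arising from $\alpha E[v_{t+1}^{\pi}(y-D_t)]$ through the induction hypothesis, and verify that they combine to exactly $-c_ty$ while the two $\alpha c_{t+1}E[D_t]$ constants cancel. Keeping these cancellations straight is what makes the $H_t$ and $V_t$ terms emerge cleanly and isolates $\epsilon_t$ exactly as defined in Definition \ref{Def4.1}. Note that no convexity or sub-$K$-convexity property is needed here, so Theorem \ref{Thm3.1} plays no role; the lemma is a purely algebraic unwinding of the policy recursion.
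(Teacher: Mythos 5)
Your proof is correct and follows exactly the route the paper intends: the paper's own proof of Lemma \ref{Lem4.1} consists solely of the remark ``this follows by the induction method,'' and your backward induction, with the $C_t$/$H_t$ bookkeeping and the splitting of $E[V_{t+1}(y-D_t)]$ into the discretized sum plus $A_{t+1}(y)$, is precisely the omitted computation. No discrepancy to report.
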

\begin{proof}
    This follows by the induction method. \qed
\end{proof}

From Lemma \ref{Lem4.1}, we have the following theorem.

\begin{theorem}\label{Thm4.1}
    For $t=0,1,\cdots,T-1$ and $x\in \mathbb{R}$,
    \begin{equation*}
        0 \leq v_t^{\pi}(x) - v_t(x) = V_t(x) - V^*_t(x) + \epsilon_t (x).
    \end{equation*}
\end{theorem}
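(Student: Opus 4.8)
The plan is to split the statement into its equality part and its inequality part and dispatch each separately. For the equality I would simply combine Lemma~\ref{Lem4.1} with the definition~(\ref{Equ2.3}) of $V^*_t$. Lemma~\ref{Lem4.1} gives $v_t^{\pi}(x) = V_t(x) - c_t x + \epsilon_t(x)$, while~(\ref{Equ2.3}) rearranges to $v_t(x) = V^*_t(x) - c_t x$. Subtracting the second from the first, the two $c_t x$ terms cancel and I obtain
\begin{equation*}
    v_t^{\pi}(x) - v_t(x) = V_t(x) - V^*_t(x) + \epsilon_t(x),
\end{equation*}
which is exactly the claimed identity. This needs no work beyond algebra once Lemma~\ref{Lem4.1} is in hand.

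The inequality $0 \leq v_t^{\pi}(x) - v_t(x)$ is the substantive part, and I would prove it by backward induction on $t$, exploiting that $v_t$ is the \emph{optimal} (minimal) cost while $v_t^{\pi}$ is the cost of one admissible policy. First observe that $\pi$ is admissible: at state $x$ it sets the post-order level to $S_t \geq s_t > x$ when $x < s_t$, and to $x$ otherwise, so the constraint $y \geq x$ in~(\ref{Equ2.1}) is always met. For the base case, $v_T^{\pi}(x) = -c_T x = v_T(x)$ by~(\ref{Equ2.2}), so the inequality holds with equality at $t = T$. For the inductive step, suppose $v_{t+1}^{\pi}(\cdot) \geq v_{t+1}(\cdot)$ and let $y_{\pi}$ denote the level chosen by $\pi$ at $x$ in period $t$. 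Writing the one-step cost decomposition
\begin{equation*}
    v_t^{\pi}(x) = K_t \delta(y_{\pi}-x) + c_t(y_{\pi}-x) + G_t(y_{\pi}) + \alpha E\left[ v_{t+1}^{\pi}(y_{\pi}-D_t) \right],
\end{equation*}
I would bound $E[v_{t+1}^{\pi}(y_{\pi}-D_t)] \geq E[v_{t+1}(y_{\pi}-D_t)]$ using the induction hypothesis, and then bound the resulting expression below by the minimum over all feasible $y \geq x$, which is precisely $v_t(x)$ by~(\ref{Equ2.1}). Hence $v_t^{\pi}(x) \geq v_t(x)$.

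The only point needing care is making the monotonicity step $v_{t+1}^{\pi} \geq v_{t+1}$ propagate through the expectation; this is immediate since the inequality holds pointwise and expectation is monotone. I do not anticipate a genuine obstacle: the result is essentially the statement that the cost of a fixed feasible policy cannot beat the optimal cost, combined with the bookkeeping identity of Lemma~\ref{Lem4.1}. The heavier lifting has already been done in establishing Lemma~\ref{Lem4.1} itself (the recursion defining $\epsilon_t$), so Theorem~\ref{Thm4.1} is mostly a repackaging of that lemma together with the standard optimality inequality.
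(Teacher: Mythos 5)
Your proposal is correct and matches the paper's (largely implicit) argument: the paper derives the equality exactly as you do, by combining Lemma~\ref{Lem4.1} with the definition~(\ref{Equ2.3}) of $V^*_t$, and treats the inequality $v_t^{\pi}(x)\geq v_t(x)$ as immediate from the fact that $v_t$ is the minimal cost while $\pi$ is an admissible policy. Your backward-induction verification of that inequality via the recursion~(\ref{Equ2.1}) simply spells out the standard step the paper leaves unstated, so there is no substantive difference.
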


From Theorem \ref{Thm4.1}, the error between $v_t^{\pi}(x)$ and the optimal value $v_t(x)$ can be divided into two parts: $V_t(x) - V^*_t(x)$ and $\epsilon_t (x)$. 
Some upper bounds of errors $V_t(x) - V^*_t(x)$ and $\epsilon_t (x)$ are given in subsections \ref{SubSecErr1} and \ref{SubSecErr2}, respectively.

%Some upper bounds of errors $V_t(x) - V^*_t(x)$ and $\epsilon_t (x)$ are given in %subsections \ref{SubSecErr1} and \ref{SubSecErr2}, respectively.

\subsection{The Upper Bounds of Error $V_t(x) - V^*_t(x)$}
\label{SubSecErr1}
In this subsection,
 we shall provide some upper bounds of  $V_t(x) - V^*_t(x)$ . The estimate functions $\psi_t $ and $\bar{\psi}_{t} $ are 
 introduced(see Definitions 4.2 and 4.3 ). Via  the estimate functions  $\psi_t $ and $\bar{\psi}_{t} $  , we introduce the function $\bar{\omega}_t $(see Definition 4.4).  Based on  the properties of functions $\psi_t $ , $\bar{\psi}_{t} $  and $\bar{\omega}_t $ , we prove that $\bar{\omega}_t(x) $ is an upper bound of the error $V_t(x) - V^*_t(x)$(see Theorem 4.2), and provide Algorithm 4.1 to find this upper bound.  Since the structure of  $\bar{\omega}_t(x) $ is rather complicate, we also provide two relatively simple upper bounds of  $V_t(x) - V^*_t(x)$(see Theorem 4.3).

\begin{condition}\label{Cond4.1}
    For any $t\;(0\leq t \leq T-1)$, there exists $\gamma_t \geq 0$ such that for any $x$ and $y$
    \begin{equation*}
        |C_t(x) - C_t(y)| \leq \gamma_t |x-y|.
    \end{equation*}
\end{condition}

We suppose that Condition \ref{Cond4.1} holds below.  Note that Condition \ref{Cond4.1} holds if
\begin{equation*}
    G_t(y) = E \left[ h_t ((y-D_t)^+) + p_t ((D_t-y)^+) \right],
\end{equation*}
where $h_t(\cdot)$ and $p_t(\cdot)$ are the holding cost function and penalty cost function, respectively.

The following functions $\psi_t$, $\varphi_t$, $\bar{\psi}_t$, and $\bar{\varphi}_t$ play key roles in the error analysis.

\begin{definition}\label{Def4.2}
    Define
    \begin{align*}
        \psi_{T-1}(x,y) & \coloneqq \gamma_{T-1}x, \quad \forall x,y,\\
        \varphi_{T-1}(x,y) & \coloneqq \begin{cases} 0,& y<s_{T-1}.\\
             \gamma_{T-1}x,& y\geq s_{T-1}. \end{cases}
    \end{align*}

    For $t=0,1,\cdots,T-2$, define
    \begin{align*}
        \psi_{t}(x,y) & \coloneqq \begin{cases} \gamma_{t}x,& y<s_{t+1}-\theta,\\
             \gamma_{t}x + \alpha \sum_{m=-1}^{n-1} \varphi_{t+1}(x,y-z_m)f_t(m),& y\geq s_{t+1}-\theta, \end{cases}\\
        &\quad \textrm{where} \;z_{n-1} \leq y-s_{t+1} < z_n,\\
        \varphi_{t}(x,y) & \coloneqq \begin{cases} 0,& y<s_t,\\
             \psi_t(y-s_t,y),& y\geq s_t \;\textrm{and}\; y-x < s_t,\\
             \psi_t(x,y),& y\geq s_t \;\textrm{and}\; y-x \geq s_t. \end{cases}
     \end{align*}
\end{definition}

\begin{definition}\label{Def4.3}
     Define
    \begin{align*}
        \bar{\psi}_{T-1}(x,y) & \coloneqq \psi_{T-1}(x,y), \quad \forall x,y,\\
        \bar{\varphi}_{T-1}(x,y) & \coloneqq \varphi_{T-1}(x,y), \quad \forall x,y,
    \end{align*}

    For $t=0,1,\cdots,T-2$, define
    \begin{align*}
        \bar{\psi}_{t}(x,y) & \coloneqq \begin{cases} \gamma_{t}x,& y<s_{t+1}-\theta,\\
             \gamma_{t}x + \alpha \sum_{m=-1}^{n-1} \bar{\varphi}_{t+1}(x,y-z_m)f_t(m),& y\geq s_{t+1}-\theta, \end{cases}\\
        & \quad \textrm{where}\; z_{n-1} \leq y-s_{t+1} < z_n, \\
        \bar{\varphi}_{t}(x,y) & \coloneqq \begin{cases} 0,& y<s_t,\\
             \bar{\psi}_t(y-s_t+\theta,y),& y\geq s_t \;\textrm{and}\; y-x < s_t,\\
             \bar{\psi}_t(x,y),& y\geq s_t \;\textrm{and}\; y-x \geq s_t. \end{cases}
    \end{align*}
\end{definition}

  Via functions ${\psi}_t $ and $\bar{\psi}_t $ , we introduce a function $\bar{\omega}_t $, which is an upper bound of the error $V_t(x) - V^*_t(x) $(see Definition 4.4 and Theorem 4.2).

It is easy to verify that the following lemma holds and the lemma characterizes the properties of functions $\psi_t $, ${\varphi}_{t} $, $\bar{\psi}_t $ and $\bar{\varphi}_{t} $.

\begin{lemma}\label{Lem4.2}
    (a) For $t=0,1,\cdots,T-2$,
    $$\psi_t (x-x',x)=\gamma_t(x-x') + \alpha \sum_{n=-1}^{\infty} \varphi_{t+1}(x-x',x-z_n)f_t(n),\quad \forall x,x', $$
    $$\bar{\psi}_t (x-x',x)=\gamma_t(x-x') + \alpha \sum_{n=-1}^{\infty} \bar{\varphi}_{t+1}(x-x',x-z_n)f_t(n),\quad \forall x,x'.$$

    (b) For $x'\leq x$ and $t=0,1,\cdots,T-1$,
    $$ \varphi_t(x-x',x) \geq 0,\qquad \psi_t(x-x',x)\geq 0,$$
    $$ \bar{\varphi}_t(x-x',x) \geq 0,\qquad \bar{\psi}_t(x-x',x)\geq 0.$$

    (c) (Monotonicity) For $x'\geq x''$, any $x$ and $t=0,1,\cdots,T-1$,
    $$ \varphi_t(x-x',x) \leq \varphi_t(x-x'',x),\qquad \psi_t(x-x',x)\leq \psi_t(x-x'',x),$$
    $$ \bar{\varphi}_t(x-x',x) \leq \bar{\varphi}_t(x-x'',x),\qquad \bar{\psi}_t(x-x',x)\leq \bar{\psi}_t(x-x'',x).$$

    (d) (Monotonicity)
    \begin{description}
      \item[(d1)] If $x\leq z$ and $x-x' \geq 0$, for $t=0,1,\cdots,T-1$,
        \begin{equation*}
            \varphi_t(x-x',x) \leq \varphi_t(x-x',z),\qquad \psi_t(x-x',x)\leq \psi_t(x-x',z),
        \end{equation*}
      \item[(d2)] If $x\leq z$ and $x-z_k \geq 0$, $z_k \in Z_{\theta}$, for $t=0,1,\cdots,T-1$,
        \begin{equation*}
            \bar{\varphi}_t(x-z_k,x) \leq \bar{\varphi}_t(x-z_k,z),\qquad \bar{\psi}_t(x-z_k,x)\leq \bar{\psi}_t(x-z_k,z).
        \end{equation*}
    \end{description}

    (e) For any $x$, any $x'$, and $t=0,1,\cdots,T-1$,
        \begin{equation*}
            \psi_t(x-x',x)\leq \bar{\psi}_t(x-x',x), \qquad \varphi_t(x-x',x) \leq \bar{\varphi}_t(x-x',x).
        \end{equation*}
\end{lemma}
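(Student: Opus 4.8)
The plan is to prove the claims by backward induction on $t$, from $t=T-1$ down to $t=0$. The definitions in Definitions \ref{Def4.2} and \ref{Def4.3} are nested in a fixed way: $\psi_t,\bar{\psi}_t$ are assembled from $\varphi_{t+1},\bar{\varphi}_{t+1}$ (index $t+1$, already available), whereas $\varphi_t,\bar{\varphi}_t$ are assembled from $\psi_t,\bar{\psi}_t$ (the same index $t$). Hence within each inductive step I would first establish every claim for $\psi_t,\bar{\psi}_t$ and only afterwards for $\varphi_t,\bar{\varphi}_t$; this layering dissolves the apparent circularity. The base case $t=T-1$ is immediate, since there all four functions equal $\gamma_{T-1}x$ or $0$ and $\gamma_{T-1}\geq 0$.

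The single most useful tool, and the content of part (a), is that the finite sum defining $\psi_t(x-x',x)$ may be replaced by $\sum_{n=-1}^{\infty}\varphi_{t+1}(x-x',x-z_n)f_t(n)$: for every index past the cutoff $z_{n-1}\leq x-s_{t+1}<z_n$ one has $x-z_n<s_{t+1}$, so $\varphi_{t+1}(x-x',x-z_n)=0$ by its first branch, and the case $x<s_{t+1}-\theta$ is covered identically since then $x-z_n\leq x+\theta<s_{t+1}$ for all $n$. The same holds verbatim for $\bar{\psi}_t$ and, in fact, for an arbitrary first argument, so I would use the infinite-sum form throughout. Parts (b), (c) and (e) then follow routinely: nonnegativity (b) and the comparison $\psi_t\leq\bar{\psi}_t$, $\varphi_t\leq\bar{\varphi}_t$ of (e) pass through the recursion because each summand carries the nonnegative weight $\alpha f_t(n)$, and the branches of $\varphi_t$ (resp. $\bar{\varphi}_t$) inherit the property from $\psi_t$ (resp. $\bar{\psi}_t$). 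The only point worth a word in (e) is the middle branch, where $\varphi_t=\psi_t(y-s_t,y)\leq\bar{\psi}_t(y-s_t,y)\leq\bar{\psi}_t(y-s_t+\theta,y)=\bar{\varphi}_t$, the last step being (c) for $\bar{\psi}_t$; and (c) transfers from $\psi_t$ to $\varphi_t$ by inspecting branches, using that on the capped branch the value is independent of the offset.

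The monotonicity in the second argument, part (d), is where the substance lies. For the unbarred functions (d1) is clean: in the infinite-sum form each summand $\varphi_{t+1}(x-x',x-z_n)$ is nondecreasing in its second argument by the induction hypothesis (the first argument fixed and nonnegative), so $\psi_t(x-x',\cdot)$ is nondecreasing; and $\varphi_t$ inherits this because, absent any $\theta$-shift, its middle and third branches agree at the boundary $y=s_t+(x-x')$ (both equal $\psi_t(x-x',s_t+(x-x'))$ there), so $\varphi_t(x-x',\cdot)$ is continuous across the branch change and nondecreasing on each piece by (c) and (d1) for $\psi_t$. The barred statement (d2) is the crux, and the restriction to lattice offsets $z_k\in Z_{\theta}$ is essential. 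At the $\bar{\psi}_t$ level it closes the induction: comparing $\bar{\psi}_t(x-z_k,x)$ with $\bar{\psi}_t(x-z_k,z)$ summand by summand, one applies (d2) for $\bar{\varphi}_{t+1}$ with common first argument $x-z_k=(x-z_n)-(z_k-z_n)$, which is legitimate precisely because $z_k-z_n\in Z_{\theta}$ and $x-z_k\geq 0$.

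The passage from $\bar{\psi}_t$ to $\bar{\varphi}_t$ in (d2) is the main obstacle and is where I would concentrate the effort. Unlike its unbarred counterpart, $\bar{\varphi}_t(x-z_k,\cdot)$ is genuinely discontinuous, dropping by the $+\theta$ of Definition \ref{Def4.3} at the boundary between its capped middle branch and its third branch, so pointwise monotonicity in the second argument is false and only the lattice-tied inequality of (d2) can hold. The device that saves it is that $s_t\in Z_{\theta}$ for $t\leq T-2$ (Definition \ref{Def3.3}): hence $s_t-\theta\in Z_{\theta}$, and whenever the left endpoint lies in the middle branch one has $z_k<s_t$ with both in $Z_{\theta}$, so $z_k\leq s_t-\theta$. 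In each branch I would write $\bar{\varphi}_t(x-z_k,x)=\bar{\psi}_t(\eta_1,x)$ and $\bar{\varphi}_t(x-z_k,z)=\bar{\psi}_t(\eta_2,z)$, where each offset $\eta_i$ is either $x-z_k$ or the capped value ($x-s_t+\theta$, resp. $z-s_t+\theta$); the above facts guarantee $\eta_1\geq 0$, $x-\eta_1\in Z_{\theta}$, and $\eta_1\leq\eta_2$. Then
\begin{equation*}
    \bar{\varphi}_t(x-z_k,x)=\bar{\psi}_t(\eta_1,x)\leq\bar{\psi}_t(\eta_1,z)\leq\bar{\psi}_t(\eta_2,z)=\bar{\varphi}_t(x-z_k,z),
\end{equation*}
the first inequality being (d2) for $\bar{\psi}_t$ (applicable since $x-\eta_1\in Z_{\theta}$) and the second being (c) for $\bar{\psi}_t$, while the case $x<s_t$ is dispatched by nonnegativity (b). I therefore expect all the genuine difficulty to sit in this reconciliation of the $+\theta$ capping with the lattice arithmetic, the remaining parts being routine propagation along the recursion.
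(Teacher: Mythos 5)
Your proposal is correct, but there is nothing in the paper to compare it against: the paper states Lemma 4.2 with only the remark ``It is easy to verify that the following lemma holds'' and supplies no proof at all. Your backward induction, layered within each step as $\psi_t,\bar{\psi}_t$ first (from $\varphi_{t+1},\bar{\varphi}_{t+1}$) and then $\varphi_t,\bar{\varphi}_t$ (from $\psi_t,\bar{\psi}_t$), is the natural and presumably intended argument, and all of your individual steps check out: the infinite-sum rewriting in (a) (terms past the cutoff vanish because their second argument falls below $s_{t+1}$), the summand-wise propagation of (b), (c), (e), the continuity of $\varphi_t(x-x',\cdot)$ at the branch boundary $y=s_t+(x-x')$ for (d1), and the summand-wise application of the induction hypothesis in (d2) with lattice offset $z_k-z_n$. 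In particular, your treatment of (d2) identifies the one genuinely delicate point that the paper's ``easy to verify'' glosses over: $\bar{\varphi}_t(x-z_k,\cdot)$ is not pointwise monotone because of the $+\theta$ cap, and the lemma's lattice restriction is exactly what rescues it, via $s_t\in Z_{\theta}$ for $t\leq T-2$ (so $z_k<s_t$ forces $z_k\leq s_t-\theta$, hence the capped offset $x-s_t+\theta$ does not exceed $x-z_k$), while the case $t=T-1$, where $s_{T-1}\notin Z_{\theta}$ in general, is harmless since $\bar{\varphi}_{T-1}$ has no cap. This reconstruction is complete and, if anything, documents more than the authors chose to record.
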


It is easy to see that
\begin{equation}\label{Equ4.1}
    \psi_{T-2} (\theta,z_j) = \begin{cases} \gamma_{T-2}\theta,& n-1<-1,\\
     \gamma_{T-2}\theta +\alpha \sum_{m=-1}^{n-1} \psi_{T-1}(\theta,z_j-z_m)f_{T-2}(m)  ,& n-1\geq -1,\end{cases}
\end{equation}
where $z_{n-1} \leq z_j - s_{T-1} < z_n$, $z_j \in Z_{\theta}$.

From Lemma \ref{Lem4.2} (a),
\begin{equation}\label{Equ4.2}
    \psi_{t} (\theta,z_j) = \begin{cases} \gamma_{t}\theta,& n-2<-1,\\
     \gamma_{t}\theta +\alpha \sum_{m=-1}^{n-2} \psi_{t+1}(\theta,z_j-z_m)f_{t}(m),& n-2\geq -1,\end{cases}
\end{equation}
where $z_{n-2} = z_j - s_{t+1} -\theta$, $0\leq t \leq T-3$, $z_j \in Z_{\theta}$, and
\begin{equation}\label{Equ4.3}
    \bar{\psi}_{t} (\theta,z_j) = \begin{cases} \gamma_{t}\theta,& n-1<-1,\\
     \gamma_{t}\theta +\alpha \sum_{m=-1}^{n-1} \bar{\psi}_{t+1}(\theta,z_j-z_m)f_{t}(m),& n-1\geq -1,\end{cases}
\end{equation}
where $z_{n-1} \leq z_j - s_{t+1} < z_n$, $0\leq t \leq T-2$, $z_j \in Z_{\theta}$.

\begin{lemma}\label{Lem4.3}
    For $x'\leq x$ and $t=0,1,\cdots,T-1$,
    \begin{align*}
        H_t(x) - H_t(x') & \leq \psi_t(x-x',x), \\
        V_t(x) - V_t(x') & \leq \varphi_t(x-x',x).
    \end{align*}
\end{lemma}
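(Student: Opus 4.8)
The plan is to prove the two inequalities simultaneously by backward induction on $t$, descending from $t=T-1$ to $t=0$. The two bounds are coupled in precisely the way the induction requires: the bound on $H_t$ relies on the bound on $V_{t+1}$, whereas the bound on $V_t$ relies on the bound on $H_t$ at the same stage. Hence at each stage I would first establish the $H_t$-inequality and then deduce the $V_t$-inequality from it. For the base case $t=T-1$, the first inequality is immediate because $H_{T-1}=C_{T-1}$, so $H_{T-1}(x)-H_{T-1}(x')\le\gamma_{T-1}(x-x')=\psi_{T-1}(x-x',x)$ by Condition \ref{Cond4.1}. For the second I would split on the position of $x,x'$ relative to $s_{T-1}$: if $x<s_{T-1}$ both values equal the constant $H_{T-1}(S_{T-1})+K_{T-1}$ and the difference is $0=\varphi_{T-1}(x-x',x)$; if $x,x'\ge s_{T-1}$ the difference is $H_{T-1}(x)-H_{T-1}(x')$, already bounded; and in the mixed case $x\ge s_{T-1}>x'$ I would invoke $H_{T-1}(s_{T-1})=H_{T-1}(S_{T-1})+K_{T-1}$ (from \ref{Equ3.4} and Definition \ref{Def3.3}) together with the monotonicity step $\gamma_{T-1}(x-s_{T-1})\le\gamma_{T-1}(x-x')$.

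For the inductive step, assume the $V$-inequality holds at $t+1$. To obtain the $H_t$-inequality I would expand
\[
  H_t(x)-H_t(x') = \bigl[C_t(x)-C_t(x')\bigr] + \alpha\sum_{n=-1}^{\infty}\bigl[V_{t+1}(x-z_n)-V_{t+1}(x'-z_n)\bigr]f_t(n),
\]
bound the bracketed $C_t$-term by $\gamma_t(x-x')$ via Condition \ref{Cond4.1}, and bound each summand by $\varphi_{t+1}(x-x',x-z_n)$ via the induction hypothesis (the arguments $x-z_n$ and $x'-z_n$ differ by exactly $x-x'$), using $f_t(n)\ge 0$. By Lemma \ref{Lem4.2}(a) the resulting expression is exactly $\psi_t(x-x',x)$, which gives the first inequality at stage $t$.

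Given the $H_t$-inequality, I would deduce the $V_t$-inequality by the same three-way split, now matching the three branches of $\varphi_t$. When $x<s_t$ the difference is $0$; when $x,x'\ge s_t$ it equals $H_t(x)-H_t(x')\le\psi_t(x-x',x)$; and in the mixed case $x\ge s_t>x'$ I would use $H_t(s_t)\le H_t(S_t)+K_t$, read off from \ref{Equ3.9}, to write $V_t(x)-V_t(x')=H_t(x)-[H_t(S_t)+K_t]\le H_t(x)-H_t(s_t)\le\psi_t(x-s_t,x)=\varphi_t(x-x',x)$.

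The step I expect to be the main obstacle is this mixed case of the $V_t$-deduction: the value $V_t(x')=H_t(S_t)+K_t$ is not itself an $H_t$-value, so I must extract the inequality $H_t(s_t)\le H_t(S_t)+K_t$ from the definition \ref{Equ3.9} of $s_t$ (checking both $K_t=0$ and $K_t>0$) and verify that the resulting $\psi_t(x-s_t,x)$ coincides with the middle branch of $\varphi_t(x-x',x)$. The only other point needing care is confirming, through Lemma \ref{Lem4.2}(a), that the full series may replace the finite sum appearing in the definition of $\psi_t$; this holds because $\varphi_{t+1}(\cdot,x-z_n)$ vanishes once $x-z_n<s_{t+1}$.
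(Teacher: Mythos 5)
Your proof is correct and is essentially the paper's intended argument: the paper's proof of Lemma \ref{Lem4.3} is a one-line citation of Condition \ref{Cond4.1}, Lemma \ref{Lem4.2} (a), (b) and induction, and your backward induction (bounding $H_t$ via Condition \ref{Cond4.1} and the $V_{t+1}$-hypothesis, then deducing the $V_t$-bound through the three branches of $\varphi_t$, with $H_t(s_t)\le H_t(S_t)+K_t$ from (\ref{Equ3.9}) handling the mixed case) is exactly the expansion of that citation.
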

\begin{proof}
    This follows by Condition 4.1, Lemma \ref{Lem4.2} (a),  (b) and the induction method. \qed
\end{proof}

\begin{lemma}\label{Lem4.4}
    For $t=0,1,\cdots,T-2$, $H_t(\bar{I}_t - \theta) > H_t(I_t) + K_t$.
\end{lemma}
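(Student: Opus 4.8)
The plan is to observe that both points $\bar{I}_t - \theta$ and $I_t$ lie in the region where $H_t$ collapses to its simplest form, namely the first branch of Formula~(\ref{Equ3.12}), so that the difference $H_t(\bar{I}_t - \theta) - H_t(I_t)$ reduces to a pure difference of $C_t$-values; the desired strict inequality then follows directly from the defining property of $\bar{I}_t$.

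First I would localize the two arguments relative to $s_{t+1} - \theta$. Definition~(\ref{Equ3.5}) gives $I_t < \min(\bar{I}_{t+1} - \theta,\, C^m_t) \le \bar{I}_{t+1} - \theta$, and since the approximate reorder point satisfies $\bar{I}_{t+1} \le s_{t+1}$ (the inequality already used in the proof of Lemma~\ref{Lem3.1}, which follows from (\ref{Equ3.9}) together with $\bar{I}_{t+1} \le I_{t+1} < C^m_{t+1} \le S^U_{t+1}$), we get $I_t < s_{t+1} - \theta$. As $\bar{I}_t \le I_t$, a fortiori $\bar{I}_t - \theta < s_{t+1} - \theta$. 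Hence both $I_t$ and $\bar{I}_t - \theta$ satisfy the condition $x < s_{t+1} - \theta$ of the first case in (\ref{Equ3.12}).

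Applying that first branch to each argument yields
$$H_t(\bar{I}_t - \theta) = C_t(\bar{I}_t - \theta) + \alpha\bigl[H_{t+1}(S_{t+1}) + K_{t+1}\bigr], \qquad H_t(I_t) = C_t(I_t) + \alpha\bigl[H_{t+1}(S_{t+1}) + K_{t+1}\bigr],$$
so the common term $\alpha[H_{t+1}(S_{t+1}) + K_{t+1}]$ cancels and $H_t(\bar{I}_t - \theta) - H_t(I_t) = C_t(\bar{I}_t - \theta) - C_t(I_t)$. By definition (\ref{Equ3.6}), $\bar{I}_t - \theta$ is the largest grid point $z_m \le I_t$ with $C_t(z_m) > C_t(I_t) + K_t$ (the defining set is nonempty since $C_t$ is coercive by Assumption~\ref{Asmp2.1}(a), so $\bar{I}_t$ is well defined), giving $C_t(\bar{I}_t - \theta) > C_t(I_t) + K_t$. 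Combining, $H_t(\bar{I}_t - \theta) - H_t(I_t) > K_t$, which is the claim.

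The argument is mostly bookkeeping with the piecewise definitions; the one step deserving care is the localization, i.e.\ confirming $\bar{I}_{t+1} \le s_{t+1}$ so that both arguments truly fall in the simple branch of (\ref{Equ3.12}). Once that placement is secured, the cancellation of the $H_{t+1}$ term and the use of the defining inequality for $\bar{I}_t$ are immediate.
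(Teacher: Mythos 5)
Your proof is correct and follows essentially the same route as the paper's: both localize $\bar{I}_t-\theta$ and $I_t$ below $s_{t+1}-\theta$ via $I_t < \bar{I}_{t+1}-\theta \leq s_{t+1}-\theta$, reduce $H_t$ at both points to $C_t(\cdot) + \alpha[H_{t+1}(S_{t+1})+K_{t+1}]$ so the recursive term cancels, and then invoke the defining inequality $C_t(\bar{I}_t-\theta) > C_t(I_t)+K_t$ from (\ref{Equ3.6}). Your version merely spells out the justification of $\bar{I}_{t+1}\leq s_{t+1}$ and the well-definedness of $\bar{I}_t$, which the paper treats as obvious.
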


\begin{proof}
    Let $z_m = \bar{I}_t - \theta$. Obviously, $z_m \leq I_t < \bar{I}_{t+1} - \theta \leq s_{t+1}-\theta$. Thus,
    \begin{align*}
        H_t(z_m) & = C_t(z_m) + \alpha [H_{t+1}(S_{t+1}) + K_{t+1}],\\
        H_t(I_t) & = C_t(I_t) + \alpha [H_{t+1}(S_{t+1}) + K_{t+1}].
    \end{align*}

    From the definition of $\bar{I}_t$,
    \begin{equation*}
        H_t(\bar{I}_t - \theta) - H_t(I_t) = H_t(z_m) - H_t(I_t) = C_t(z_m) - C_t(I_t) > C_t(I_t) + K_t - C_t(I_t) = K_t.
    \end{equation*}
    \qed
\end{proof}

\begin{corollary}\label{Coro4.1}
    For $t=0,1,\cdots,T-1$, $H_t(s_t - \theta) \geq H_t(S_t) + K_t$.
\end{corollary}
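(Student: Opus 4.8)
The plan is to prove the corollary by separating the index $t=T-1$ from the indices $0\le t\le T-2$, and within the latter range splitting on whether $K_t=0$ or $K_t>0$; in each case the desired inequality reduces either to convexity of $C_{T-1}$, to the global-minimality property in Theorem~\ref{Thm3.1}(b), to the minimality built into the definition (\ref{Equ3.9}) of $s_t$, or to Lemma~\ref{Lem4.4}.

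For the terminal index $t=T-1$, recall $H_{T-1}=C_{T-1}$, $S_{T-1}=C^m_{T-1}$, and by (\ref{Equ3.4}) that $s_{T-1}=\bar{Y}_{T-1}\le C^m_{T-1}$ with $C_{T-1}(s_{T-1})=C_{T-1}(C^m_{T-1})+K_{T-1}$. Since $C_{T-1}$ is convex (Assumption~\ref{Asmp2.1}) and $C^m_{T-1}$ is its smallest minimizer, $C_{T-1}$ is non-increasing on $(-\infty,C^m_{T-1}]$; as $s_{T-1}-\theta\le s_{T-1}\le C^m_{T-1}$, I would conclude $H_{T-1}(s_{T-1}-\theta)=C_{T-1}(s_{T-1}-\theta)\ge C_{T-1}(s_{T-1})=H_{T-1}(S_{T-1})+K_{T-1}$.

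For $0\le t\le T-2$ I would argue as follows. If $K_t=0$, then (\ref{Equ3.9}) gives $s_t=S_t$, so $s_t-\theta=S_t-\theta\in Z_\theta$, and Theorem~\ref{Thm3.1}(b) yields $H_t(s_t-\theta)\ge \inf_{z_j\in Z_\theta}H_t(z_j)=H_t(S_t)=H_t(S_t)+K_t$. If $K_t>0$, then by (\ref{Equ3.9}) $s_t$ is the smallest point of $Z_\theta$ in $[\bar{I}_t,S_t]$ satisfying $H_t(\cdot)\le H_t(S_t)+K_t$ (this range is nonempty since $\bar{I}_t\le I_t\le S_t$, and $S_t$ itself qualifies). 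I split on the location of $s_t$: when $s_t>\bar{I}_t$, the point $s_t-\theta$ still lies in $[\bar{I}_t,S_t]$ but strictly below $s_t$, so by minimality it violates the defining inequality, giving $H_t(s_t-\theta)>H_t(S_t)+K_t$; when $s_t=\bar{I}_t$, I would invoke Lemma~\ref{Lem4.4} to get $H_t(s_t-\theta)=H_t(\bar{I}_t-\theta)>H_t(I_t)+K_t$, and then use Theorem~\ref{Thm3.1}(b) in the form $H_t(I_t)\ge\inf_{z_j\in Z_\theta}H_t(z_j)=H_t(S_t)$ to conclude $H_t(s_t-\theta)>H_t(S_t)+K_t$.

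The main obstacle I anticipate is the boundary subcase $s_t=\bar{I}_t$ with $K_t>0$: there the pure minimality argument fails because there is no grid point of $Z_\theta$ in $[\bar{I}_t,S_t]$ strictly below $s_t$, so the bound on $H_t(s_t-\theta)=H_t(\bar{I}_t-\theta)$ must come from outside the definition of $s_t$, namely from Lemma~\ref{Lem4.4} chained with the global lower bound $H_t(S_t)\le H_t(I_t)$. The only bookkeeping to verify carefully is that all relevant points ($S_t-\theta$, $s_t-\theta$, $I_t$, $\bar{I}_t-\theta$) lie in $Z_\theta$ so that Theorem~\ref{Thm3.1}(b) applies, and that the case split is exhaustive.
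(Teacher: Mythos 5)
Your proof is correct and takes essentially the same approach as the paper: the paper's own (one-line) proof cites exactly Lemma~\ref{Lem4.4} and Theorem~\ref{Thm3.1}, which---together with the minimality built into the definition (\ref{Equ3.9}) of $s_t$ and the convexity of $C_{T-1}$ at the terminal period---are precisely the ingredients you assemble. Your case analysis ($t=T-1$; $K_t=0$; $K_t>0$ with $s_t>\bar{I}_t$ or $s_t=\bar{I}_t$) is a faithful, fully detailed expansion of that citation.
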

\begin{proof}
    This follows from Lemma \ref{Lem4.4} and Theorem \ref{Thm3.1}. \qed
\end{proof}

\begin{lemma}\label{Lem4.5}
    For $x'\leq x$ and $t=0,1,\cdots,T-1$,
    \begin{align*}
        H_t(x') - H_t(x) & \leq \bar{\psi}_t (x-x',x), \\
        V_t(x') - V_t(x) & \leq \bar{\varphi}_t (x-x',x).
    \end{align*}
\end{lemma}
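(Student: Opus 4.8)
The plan is to prove the two inequalities simultaneously by backward induction on $t$, running from $t=T-1$ down to $t=0$ and treating the $H$-inequality first within each step, then feeding it into the $V$-inequality. This mirrors the proof of Lemma~\ref{Lem4.3}, but with every quantity replaced by its barred analogue and with Corollary~\ref{Coro4.1} invoked at the one genuinely new place. For the base case $t=T-1$, since $H_{T-1}=C_{T-1}$, Condition~\ref{Cond4.1} immediately gives $H_{T-1}(x')-H_{T-1}(x)\le \gamma_{T-1}(x-x')=\bar{\psi}_{T-1}(x-x',x)$. For the $V$-part I would use that Definitions~\ref{Def3.2} and \ref{Def3.3} make the relation exact, namely $H_{T-1}(S_{T-1})+K_{T-1}=C_{T-1}(C^m_{T-1})+K_{T-1}=C_{T-1}(s_{T-1})$; hence in the only nontrivial configuration $x'<s_{T-1}\le x$ the difference $V_{T-1}(x')-V_{T-1}(x)$ equals $C_{T-1}(s_{T-1})-C_{T-1}(x)$, which Condition~\ref{Cond4.1} bounds by $\gamma_{T-1}(x-s_{T-1})\le\gamma_{T-1}(x-x')=\bar{\varphi}_{T-1}(x-x',x)$.

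For the inductive step, assume both inequalities hold at $t+1$. For the $H$-part I would write $H_t(x')-H_t(x)=\bigl[C_t(x')-C_t(x)\bigr]+\alpha\sum_{n=-1}^{\infty}\bigl[V_{t+1}(x'-z_n)-V_{t+1}(x-z_n)\bigr]f_t(n)$, bound the $C_t$-difference by $\gamma_t(x-x')$ via Condition~\ref{Cond4.1}, and bound each summand by $\bar{\varphi}_{t+1}(x-x',x-z_n)$ by applying the induction hypothesis for $V$ to the ordered pair $x'-z_n\le x-z_n$. By Lemma~\ref{Lem4.2}(a) the resulting sum is exactly $\bar{\psi}_t(x-x',x)$, which closes the $H$-part.

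For the $V$-part I would split on the location of $x$ and $x'$ relative to $s_t$, matching the three branches of $\bar{\varphi}_t$. If $x<s_t$, both $V_t$ values equal the flat level $H_t(S_t)+K_t$, so the difference is $0=\bar{\varphi}_t(x-x',x)$. If $x'\ge s_t$, the difference reduces to $H_t(x')-H_t(x)\le\bar{\psi}_t(x-x',x)=\bar{\varphi}_t(x-x',x)$ by the $H$-part just proved. The remaining case $x'<s_t\le x$ is the crux: here $V_t(x')-V_t(x)=\bigl[H_t(S_t)+K_t\bigr]-H_t(x)$, and I would use Corollary~\ref{Coro4.1} to replace $H_t(S_t)+K_t$ by the larger quantity $H_t(s_t-\theta)$, then apply the $H$-inequality with $s_t-\theta\le x$ to get $H_t(s_t-\theta)-H_t(x)\le\bar{\psi}_t(x-s_t+\theta,x)$, which is precisely the value $\bar{\varphi}_t(x-x',x)$ takes in this branch.

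The main obstacle is exactly this last case, where $V_t$ has a downward jump at $s_t$ and the inequality $s_t<S_t$ need not be tight. The $+\theta$ shift built into the second branch of $\bar{\varphi}_t$ is what absorbs the discretization gap between $s_t$ and the grid point $s_t-\theta$ at which Corollary~\ref{Coro4.1} (via Lemma~\ref{Lem4.4} and Theorem~\ref{Thm3.1}) controls the jump; confirming that $H_t(S_t)+K_t\le H_t(s_t-\theta)$ and that $s_t-\theta\le x$ so the $H$-inequality is legitimately applicable is the delicate bookkeeping. Everything else, including the convergence of $\sum_{n=-1}^{\infty}f_t(n)=1$ and the purely mechanical case checking, is routine.
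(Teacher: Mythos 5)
Your proof is correct and follows essentially the same route as the paper: the paper's own proof is just the citation ``Condition~\ref{Cond4.1}, Lemma~\ref{Lem4.2} (a), (b), Corollary~\ref{Coro4.1} and the induction method,'' and your argument is a faithful expansion of exactly these ingredients, correctly identifying the jump case $x'<s_t\le x$ as the place where Corollary~\ref{Coro4.1} and the $+\theta$ shift in $\bar{\varphi}_t$ are needed.
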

\begin{proof}
    This follows from Condition 4.1, Lemma \ref{Lem4.2} (a),  (b), Corollary \ref{Coro4.1} and the induction method. \qed
\end{proof}

\begin{definition}\label{Def4.4}
    For any $x$, define
    \begin{align*}
        \omega_{T-1} (x) & \coloneqq 0,\\
        \bar{\omega}_{T-1} (x) & \coloneqq 0.
    \end{align*}

    For $t=0,1,\cdots,T-2$, define
    \begin{align}
        \omega_t(x) & \coloneqq \psi_t(\theta,x) - \gamma_t \theta + \alpha \sum_{n=-1}^{\infty}
            \bar{\omega}_{t+1}(x-z_{n})f_t(n), \quad \forall x,\nonumber\\
        \bar{\omega}_t(x) & \coloneqq \begin{cases} \eta_t,& x\leq S^U_t,\nonumber\\
             \max(\eta_t, \omega_t(x)),& x>S^U_t, \end{cases}\nonumber\\
        & \quad \textrm{where} \; \eta_{T-1} \coloneqq 0, \; \eta_t \coloneqq \bar{\psi}_t(\theta,S^U_t)+\omega_t(S^U_t). \label{Equ4.4}
    \end{align}
\end{definition}

We can define $\omega_{T-2}(x)$, $\bar{\omega}_{T-2}(x)$, $\omega_{T-3}(x)$, $\bar{\omega}_{T-3}(x)$, $\cdots$, $\omega_{0}(x)$ and $\bar{\omega}_{0}(x)$ in order. The above $\bar{\omega}_t $ is an upper bound of the error $V_t(x) - V^*_t(x) $(see Theorem 4.2).

\begin{lemma}\label{Lem4.6}
    For $t=0,1,\cdots,T-1$, $\omega_t(x)$ and $\bar{\omega}_t(x)$ are increasing functions in $x$.
\end{lemma}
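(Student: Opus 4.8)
The plan is to prove both monotonicity statements simultaneously by downward induction on $t$, from $t=T-1$ to $t=0$, since $\omega_t$ and $\bar\omega_t$ are defined in terms of $\omega_{t+1}$ and $\bar\omega_{t+1}$. The base case $t=T-1$ is immediate: both $\omega_{T-1}(x)\equiv 0$ and $\bar\omega_{T-1}(x)\equiv 0$ are constant, hence (weakly) increasing.

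For the inductive step, I would assume $\omega_{t+1}$ and $\bar\omega_{t+1}$ are increasing and establish the claim for $\omega_t$ first, then for $\bar\omega_t$. Writing $\omega_t(x)=\bigl[\psi_t(\theta,x)-\gamma_t\theta\bigr]+\alpha\sum_{n=-1}^{\infty}\bar\omega_{t+1}(x-z_n)f_t(n)$, I would handle the two summands separately. The sum $\sum_{n=-1}^{\infty}\bar\omega_{t+1}(x-z_n)f_t(n)$ is increasing in $x$ termwise: each $\bar\omega_{t+1}(x-z_n)$ is increasing in $x$ by the induction hypothesis, the weights $f_t(n)=F_t(z_{n+1})-F_t(z_n)\geq 0$ are nonnegative, so the nonnegative combination preserves monotonicity. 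For the first summand I must show $\psi_t(\theta,x)$ is increasing in $x$; here I would invoke the monotonicity properties of $\psi_t$ already recorded in Lemma 4.2. Specifically, applying Lemma 4.2(d1) with the first argument fixed at $\theta$ (so $x-x'=\theta\geq 0$) gives $\psi_t(\theta,x)\leq\psi_t(\theta,z)$ whenever $x\leq z$, which is exactly the monotonicity in the second coordinate that I need. Adding the two increasing pieces yields that $\omega_t$ is increasing.

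It then remains to show $\bar\omega_t$ is increasing. By definition $\bar\omega_t(x)=\eta_t$ for $x\leq S^U_t$ and $\bar\omega_t(x)=\max(\eta_t,\omega_t(x))$ for $x>S^U_t$, where $\eta_t$ is a constant. On $(-\infty,S^U_t]$ the function is constant, hence increasing; on $(S^U_t,\infty)$ the function $\max(\eta_t,\omega_t(x))$ is the pointwise maximum of a constant and the increasing function $\omega_t$ (just established), hence increasing there as well. The only point requiring care is the junction at $x=S^U_t$: I must verify that the value $\eta_t$ on the left does not exceed $\lim_{x\downarrow S^U_t}\max(\eta_t,\omega_t(x))\geq\eta_t$, which is automatic since the right-hand branch is bounded below by $\eta_t$ by construction of the maximum. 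This is the main (and only real) obstacle — confirming there is no downward jump at $S^U_t$ — and it is resolved purely by the $\max$ with $\eta_t$ guaranteeing the right branch never falls below the left value. Combining the three regions gives that $\bar\omega_t$ is increasing, completing the induction.
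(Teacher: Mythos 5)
Your proof is correct and follows essentially the same route as the paper, which proves the lemma by induction on $t$ combined with the monotonicity properties in Lemma 4.2(d): the base case $\omega_{T-1}=\bar{\omega}_{T-1}\equiv 0$ is trivial, the inductive step for $\omega_t$ uses Lemma 4.2(d1) for $\psi_t(\theta,\cdot)$ together with the nonnegative weights $f_t(n)$ and the induction hypothesis on $\bar{\omega}_{t+1}$, and $\bar{\omega}_t$ inherits monotonicity as the maximum of the constant $\eta_t$ and the increasing $\omega_t$. Your additional check that there is no downward jump at $x=S^U_t$ is a worthwhile detail that the paper's one-line proof leaves implicit.
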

\begin{proof}
    This follows from Lemma \ref{Lem4.2} (d) and the induction method. \qed
\end{proof}

From Definition \ref{Def4.4},
\begin{equation}\label{Equ4.5}
    \omega_t(x) = \begin{cases} \begin{split}\psi_t(\theta,x) - \gamma_t \theta + \alpha \sum_{m=-1}^{n-1}
     \max ( \eta_{t+1},\; \omega_{t+1}(x-z_m) )f_t(m) \\+ \alpha \eta_{t+1}(1-F_t(z_n)),\end{split}& n-1\geq -1,\\
     \psi_t(\theta,x) - \gamma_t \theta + \alpha \eta_{t+1},& n-1< -1, \end{cases}
\end{equation}
where $z_{n-1} < x-S^U_{t+1} \leq z_n$, $0\leq t \leq T-2$.

\begin{lemma}\label{Lem4.7}
    For $t=0,1,\cdots,T-1$, $S^*_t \leq S^U_t$.
\end{lemma}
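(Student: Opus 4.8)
The plan is to prove the stronger statement that $H^*_t(y) > H^*_t(z_{n_0})$ for every $y \ge S^U_t$, where $z_{n_0}$ is the grid point with $z_{n_0} < C^m_t \le z_{n_0+1}$ appearing in Definition~\ref{Def3.1}. Since $z_{n_0} < C^m_t \le S^U_t \le y$, this says that no point of $[S^U_t,\infty)$ can be a global minimizer of $H^*_t$; because the minimizer set of $H^*_t$ is then contained in $(-\infty, S^U_t)$, its smallest element $S^*_t$ defined in (\ref{Equ2.5}) satisfies $S^*_t < S^U_t$, which is what we want. For the terminal index $t = T-1$ the claim is immediate: $V^*_T \equiv 0$ gives $H^*_{T-1} = C_{T-1}$, hence $S^*_{T-1} = C^m_{T-1} < S^U_{T-1}$ by (\ref{Equ3.1})--(\ref{Equ3.2}); so I only need to treat $0 \le t \le T-2$.

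Two ingredients drive the estimate. First, since $C_t$ is convex with smallest minimizer $C^m_t$, it is non-decreasing on $[C^m_t,\infty)$, so $C_t(y) \ge C_t(S^U_t)$ for all $y \ge S^U_t \ge C^m_t$; combined with the defining inequality $C_t(S^U_t) > C_t(z_{n_0}) + K_t$ from (\ref{Equ3.2}), this yields $C_t(y) - C_t(z_{n_0}) > K_t$. Second, I would establish the analogue of (\ref{Equ3.11}) for the exact value function, namely $V^*_{t+1}(u) - V^*_{t+1}(w) \ge -K_{t+1}$ whenever $u \ge w$. Writing $H^*_t(y) - H^*_t(z_{n_0}) = [C_t(y) - C_t(z_{n_0})] + \alpha E[V^*_{t+1}(y - D_t) - V^*_{t+1}(z_{n_0} - D_t)]$ and using $y > z_{n_0}$ (so that $y - D_t \ge z_{n_0} - D_t$ for every realization), the two ingredients give $H^*_t(y) - H^*_t(z_{n_0}) > K_t - \alpha K_{t+1}$, which is $\ge 0$ by Assumption~\ref{Asmp2.1}(b). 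This is the required strict inequality, and it then suffices to recall that the minimizer set of $H^*_t$ is nonempty (as $H^*_t \to +\infty$) to conclude $S^*_t < S^U_t$.

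The main work is the decrement bound $V^*_{t+1}(u) - V^*_{t+1}(w) \ge -K_{t+1}$. I would derive it from the $(s^*_{t+1}, S^*_{t+1})$-representation in Theorem~\ref{Thm2.1}(b) together with the $K_{t+1}$-convexity of $H^*_{t+1}$ in Theorem~\ref{Thm2.1}(a), splitting into cases according to the positions of $u$ and $w$ relative to $s^*_{t+1}$ and $S^*_{t+1}$: on $(-\infty, s^*_{t+1})$ the function $V^*_{t+1}$ is the constant $H^*_{t+1}(S^*_{t+1}) + K_{t+1}$; for $s^*_{t+1} \le w \le S^*_{t+1}$ one uses $H^*_{t+1}(w) \le H^*_{t+1}(S^*_{t+1}) + K_{t+1}$ (a consequence of $K_{t+1}$-convexity and the definition (\ref{Equ2.6}) of $s^*_{t+1}$) against $H^*_{t+1}(u) \ge \min H^*_{t+1}$; and for $S^*_{t+1} \le w \le u$ one applies $K_{t+1}$-convexity directly to the triple $(S^*_{t+1}, w, u)$. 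In every case the drop is at most $K_{t+1}$. I expect this case analysis, together with the bookkeeping that keeps the comparison point $z_{n_0}$ strictly to the left of $C^m_t$, to be the only delicate part — the rest is a short chain of inequalities. The crucial structural observation that makes the argument close is to compare $H^*_t(y)$ against a point to the left of $C^m_t$ simultaneously for all $y \ge S^U_t$, exploiting the monotonicity of $C_t$, rather than invoking the $K_t$-convexity of $H^*_t$ directly; the latter would only deliver the weaker margin $(1-\tau)K_t$ and would fail to close when $\alpha K_{t+1} > 0$.
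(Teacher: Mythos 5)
Your proof is correct, and it supplies exactly what the paper leaves out: the paper's entire proof of Lemma~\ref{Lem4.7} is the one-line remark that it is ``similar to that of Theorem 9.5.6 in Zipkin \cite{Zipkin00}'', with no argument spelled out. Your two ingredients are precisely the exact-model analogues of the ones the paper uses to prove Theorem~\ref{Thm3.1} for the approximate functions: there, the decrement bound (\ref{Equ3.11}), $V_{t+1}(z_j)-V_{t+1}(z_n)\ge -K_{t+1}$, is combined with the convexity bound (\ref{Equ3.10}), the defining inequality (\ref{Equ3.2}) of $S^U_t$, and Assumption~\ref{Asmp2.1}(b) to conclude $H_t(z_j)>H_t(z_{n_0})$ for all grid points $z_j\ge S^U_t$. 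You run the same chain for $H^*_t$, replacing (\ref{Equ3.11}) by the bound $V^*_{t+1}(u)-V^*_{t+1}(w)\ge -K_{t+1}$ for $u\ge w$, which you correctly derive from the $(s^*_{t+1},S^*_{t+1})$ representation and the $K_{t+1}$-convexity in Theorem~\ref{Thm2.1}, together with $H^*_{t+1}(s^*_{t+1})=H^*_{t+1}(S^*_{t+1})+K_{t+1}$ from (\ref{Equ2.6}); your case analysis ($w<s^*_{t+1}$, then $s^*_{t+1}\le w\le S^*_{t+1}$, then $S^*_{t+1}\le w\le u$) is complete and caps the drop at $K_{t+1}$ in every case. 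Two features of your write-up are worth noting: unlike the induction needed for Theorem~\ref{Thm3.1}, your argument requires no induction on $t$, since Theorem~\ref{Thm2.1} already furnishes the structure of $V^*_{t+1}$ at every stage (its guarantee that the minimum in (\ref{Equ2.5}) is attained is the only external fact needed to pass from ``no minimizer of $H^*_t$ lies in $[S^U_t,\infty)$'' to the conclusion); and you in fact obtain the strict inequality $S^*_t<S^U_t$. Your closing observation is also apt: the margin $K_t$ coming from the strict inequality in (\ref{Equ3.2}) is exactly what absorbs the $\alpha K_{t+1}$ loss from the expectation term, which a direct appeal to the $K_t$-convexity of $H^*_t$ could not provide.
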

\begin{proof}
    The proof is similar to that of Theorem 9.5.6 in Zipkin \cite{Zipkin00}. \qed
\end{proof}

The following theorem gives an upper bound of the error $V_t(x) - V^*_t(x)$.

\begin{theorem}\label{Thm4.2}
    (one of the main results). For $t=0,1,\cdots,T-1$ and any $x$,
    \begin{align*}
        H_t(x) - H^*_t(x) & \leq \omega_t(x),\\
        V_t(x) - V^*_t(x) & \leq \bar{\omega}_t(x).
    \end{align*}
\end{theorem}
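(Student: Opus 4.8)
The plan is to prove both inequalities simultaneously by backward induction on $t$. The base case $t=T-1$ is immediate: since $V^*_T\equiv 0$ we have $H_{T-1}=H^*_{T-1}=C_{T-1}$, and because $S_{T-1}=C^m_{T-1}=S^*_{T-1}$ and $s_{T-1}=\bar{Y}_{T-1}=s^*_{T-1}$, one gets $V_{T-1}=V^*_{T-1}$; both differences vanish, matching $\omega_{T-1}=\bar{\omega}_{T-1}=0$. So fix $t\le T-2$ and assume $H_{t+1}-H^*_{t+1}\le\omega_{t+1}$ and $V_{t+1}-V^*_{t+1}\le\bar{\omega}_{t+1}$.

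For the first inequality I would write $H_t(x)-H^*_t(x)=\alpha[\sum_{n\ge-1}V_{t+1}(x-z_n)f_t(n)-E\,V^*_{t+1}(x-D_t)]$ and condition on the events $\{z_n<D_t\le z_{n+1}\}$, on each of which $x-z_n\ge x-D_t$ and $D_t-z_n\le\theta$. Splitting $V_{t+1}(x-z_n)-V^*_{t+1}(x-D_t)$ as $[V_{t+1}(x-z_n)-V_{t+1}(x-D_t)]+[V_{t+1}(x-D_t)-V^*_{t+1}(x-D_t)]$, the first bracket is a pure discretization error, bounded by $\varphi_{t+1}(\theta,x-z_n)$ via Lemma~\ref{Lem4.3} and the monotonicity of Lemma~\ref{Lem4.2}(c); the second is the induction error, at most $\bar{\omega}_{t+1}(x-D_t)\le\bar{\omega}_{t+1}(x-z_n)$ by the hypothesis and Lemma~\ref{Lem4.6}. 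Summing against $f_t(n)$ and using the identity $\psi_t(\theta,x)-\gamma_t\theta=\alpha\sum_n\varphi_{t+1}(\theta,x-z_n)f_t(n)$ from Lemma~\ref{Lem4.2}(a) reproduces exactly the definition of $\omega_t(x)$, giving $H_t(x)-H^*_t(x)\le\omega_t(x)$.

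For the second inequality I would first dispose of $x>S^U_t$: there $x$ exceeds $S^U_t\ge\max(S_t,S^*_t)$, so neither policy orders and $V_t(x)-V^*_t(x)=H_t(x)-H^*_t(x)\le\omega_t(x)\le\bar{\omega}_t(x)$. For $x\le S^U_t$ the target is the flat bound $\eta_t=\bar{\psi}_t(\theta,S^U_t)+\omega_t(S^U_t)$, and I would split according to the two ordering decisions. When both policies order, $V_t-V^*_t=H_t(S_t)-H^*_t(S^*_t)$; using that $S_t$ is the grid minimizer of $H_t$ (Theorem~\ref{Thm3.1}(b)) and $S^*_t\le S^U_t$ (Lemma~\ref{Lem4.7}), I bound $H_t(S_t)$ by $H_t$ at the nearest grid point $\ge S^*_t$, cross from $S^*_t$ to that point with the increase bound (Lemma~\ref{Lem4.3}), and lift everything to $S^U_t$ by the monotonicity and the comparison $\psi_t\le\bar{\psi}_t$ of Lemma~\ref{Lem4.2}(c),(d),(e); this yields $\le\psi_t(\theta,S^U_t)+\omega_t(S^U_t)\le\eta_t$. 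When neither orders, $V_t-V^*_t=H_t(x)-H^*_t(x)\le\omega_t(x)\le\omega_t(S^U_t)\le\eta_t$.

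The hard part will be the two mixed regions where exactly one policy orders. If $x\in[s^*_t,s_t)$ (approximate orders, optimal does not) the error is $H_t(S_t)+K_t-H^*_t(x)$; here I expect a clean argument, bounding $H^*_t(x)\ge H_t(x)-\omega_t(x)$ by the first inequality, then using that the grid point just below $x$ lies below $s_t$ so (by the definition of $s_t$) exceeds $H_t(S_t)+K_t$, and stepping back up over a span below $\theta$ with the decrease bound of Lemma~\ref{Lem4.5}; this collapses the error to $\bar{\psi}_t(\theta,x)+\omega_t(x)\le\eta_t$. The genuinely delicate region is $x\in[s_t,s^*_t)$ (approximate does not order, optimal does), where the error is $H_t(x)-H^*_t(S^*_t)-K_t$ and, for $x$ above $S_t$, one cannot simply invoke the both-order bound. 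The obstacle is to show that $H_t(x)\le H_t(S_t)+K_t$ up to an $O(\theta)$ correction throughout $[s_t,S^U_t]$: at grid points this follows from sub-$K_t$-convexity (Theorem~\ref{Thm3.1}(a)), Corollary~\ref{Coro4.1} and the construction of $S^U_t$, while at intermediate points the downward jumps of $H_t$ created by the discretization have size $O(\theta)$ (each carries only the single-cell mass $f_t(n)$), so a $\theta$-span estimate via Lemmas~\ref{Lem4.3}/\ref{Lem4.5} controls them. Reducing this region to the both-order bound plus a single $\theta$-span slack is exactly why $\eta_t$ is defined with the summand $\bar{\psi}_t(\theta,S^U_t)$, and it is the step I expect to require the most care.
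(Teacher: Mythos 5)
Your overall architecture coincides with the paper's: backward induction, the identical discretization-error/induction-error split for $H_t(x)-H^*_t(x)\leq\omega_t(x)$, and a case analysis on the two ordering decisions with target $\eta_t$ for $x\leq S^U_t$. Your first inequality, your both-order case, your neither-order cases, and (up to a citation issue noted below) your region $x\in[s^*_t,s_t)$ are essentially the paper's appendix cases (a), (b) and (d). The genuine gap is exactly where you predicted it: the region $x\in[s_t,s^*_t)$, and your proposed repair cannot close it for budget reasons. For $x\leq S^U_t$ the target is $\bar{\omega}_t(x)=\eta_t=\bar{\psi}_t(\theta,S^U_t)+\omega_t(S^U_t)$, and the both-order bound already consumes \emph{all} of $\eta_t$: since $S^*_t$ need not be a grid point, bounding $H_t(S_t)-H^*_t(S^*_t)$ requires bridging from $S^*_t$ to a neighboring grid point, which costs the full $\bar{\psi}_t(\theta,S^U_t)$ on top of $\omega_t(S^U_t)$ (this is why $\eta_t$ has that summand — it is not spare slack). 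Consequently "the both-order bound plus a single $\theta$-span slack" yields at best $\eta_t+O(\theta)$, which strictly exceeds $\bar{\omega}_t(x)=\eta_t$, so the induction step fails. Moreover, the ingredient you propose — $H_t(x)\leq H_t(S_t)+K_t$ up to $O(\theta)$ on all of $[s_t,S^U_t]$, proved at grid points and extended by controlling the downward jumps of $H_t$ at the points $s_{t+1}+z_n$ — is circular in effort (those jumps are themselves controlled only through the same $\bar{\psi}$-machinery) and, as just argued, insufficient even if true.

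The observation you are missing is that sub-$K_t$-convexity (Definition~\ref{Def3.4}, Theorem~\ref{Thm3.1}(a)) constrains only the \emph{rightmost} of the three points to lie on the grid; the left and middle points may be arbitrary reals. The paper takes $z_{k+1}$ to be the grid point just above $S^*_t$ (so $z_{k+1}\leq S^U_t$ by Lemma~\ref{Lem4.7}); since $s_t\leq x<s^*_t\leq S^*_t\leq z_{k+1}$, sub-$K_t$-convexity together with $H_t(s_t)\leq H_t(S_t)+K_t\leq H_t(z_{k+1})+K_t$ (definition of $s_t$ and Theorem~\ref{Thm3.1}(b)) gives the \emph{exact} inequality $H_t(x)\leq H_t(z_{k+1})+K_t$, with no $O(\theta)$ correction and no jump analysis. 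Then
$V_t(x)-V^*_t(x)=H_t(x)-H^*_t(S^*_t)-K_t\leq \bigl[H_t(z_{k+1})-H_t(S^*_t)\bigr]+\bigl[H_t(S^*_t)-H^*_t(S^*_t)\bigr]\leq \psi_t(z_{k+1}-S^*_t,z_{k+1})+\omega_t(S^*_t)\leq\psi_t(\theta,S^U_t)+\omega_t(S^U_t)\leq\eta_t$,
using Lemma~\ref{Lem4.3}, the already-established $H$-inequality, and Lemma~\ref{Lem4.2}(c),(d),(e) — the extra $\theta$-span cost lands on the $\psi_t$ term, not on top of $\eta_t$. A smaller instance of the same oversight occurs in your region $[s^*_t,s_t)$: you justify $H_t>H_t(S_t)+K_t$ at the grid point below $x$ "by the definition of $s_t$", but (\ref{Equ3.9}) only governs grid points in $[\bar{I}_t,S_t]$; for points below $\bar{I}_t$ you need Corollary~\ref{Coro4.1} combined with sub-$K_t$-convexity anchored at the grid point $S_t$, which is precisely the paper's case (b1).
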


The proof of Theorem 4.2 is long. See Appendix.

From Theorem \ref{Thm4.2} and Lemma \ref{Lem4.6}, for $z_{j-1} < x\leq z_j$,
\begin{equation*}
    V_0(x) - V^*_0(x) \leq \bar{\omega}_0(x) \leq \bar{\omega}_0(z_j).
\end{equation*}
That is, $\bar{\omega}_0(z_j)$ is an upper bound of the error $V_0(x) - V^*_0(x)$. For any $z_j \in Z_{\theta}$, we give the following Algorithm 4.1 finds $\bar{\omega}_0(z_j)$.

%\begin{algorithm}\label{Alg4.1}
 \section*{Algorithm 4.1} 
  \begin{description}
      \item[Step 0.]  Compute $L_0=z_j$, $L_m=\max(L_{m-1},S^U_{m-1})+\theta$, $m=1,2,\cdots,T-2$.  Compute $\bar{L}_1 = S^U_0 + \theta$, $\bar{L}_m = \max(\bar{L}_{m-1},S^U_{m-1})+\theta$, $m=2,3,\cdots,T-2$. \  $t \Leftarrow T-2$.
      \item[Step 1.] If $t<1$, go to Step 5. Otherwise, go to Step 2.
      \item[Step 2.] If $s_t+\theta \leq L_t$, compute all $\psi_t(\theta, z_k)$,  $s_t+\theta \leq z_k \leq L_t$, by (\ref{Equ4.1}) and (\ref{Equ4.2}). Compute $\psi_t(\theta,S^U_t)$ and go to Step 3.

          If $s_t+\theta > L_t$, compute $\psi_t(\theta,S^U_t)$ and go to Step 3.
      \item[Step 3.] If $s_t \leq \bar{L}_t$, compute all $\bar{\psi}_t(\theta,z_k)$ by  (\ref{Equ4.3}),  $s_t \leq z_k \leq \bar{L}_t$. Compute $\bar{\psi}_t(\theta,S^U_t)$ and $\eta_{t+1}$ (by (\ref{Equ4.4})). Go to Step 4.

          If $s_t > \bar{L}_t$, compute $\bar{\psi}_t(\theta,S^U_t)$ and $\eta_{t+1}$. Go to Step 4.
      \item[Step 4.] If $S^U_t+\theta \leq L_t$, compute all $\omega_t(z_k)$ by  (\ref{Equ4.5}),  $S^U_t +\theta \leq z_k \leq L_t$. Compute $\omega_t(S^U_t)$. $t \Leftarrow t-1$. Go to Step 1.

          If $S^U_t+\theta > L_t$, compute $\omega_t(S^U_t)$. $t \Leftarrow t-1$. Go to Step 1.
      \item[Step 5.] If $z_j \leq S^U_0$, compute $\psi_0(\theta,S^U_0)$, $\bar{\psi}_0(\theta,S^U_0)$ and $\eta_1$. Compute $\omega_0(S^U_0)$, $\eta_0$ and $\bar{\omega}_0(z_j)$ and then stop.

          If $z_j > S^U_0$, compute $\psi_0(\theta,z_j)$, $\psi_0(\theta,S^U_0)$, $\bar{\psi}_0(\theta,S^U_0)$ and $\eta_1$. Compute $\omega_0(S^U_0)$, $\omega_0(z_j)$, $\eta_0$ and $\bar{\omega}_0(z_j)$ and then stop.
    \end{description}
 %\end{algorithm}

It is rather complicated to find $\bar{\omega}_0(z_j)$ by Algorithm 4.1. Below we give two relatively simple upper bounds of $V_t(x) - V^*_t(x)$.

\begin{lemma}\label{Lem4.8}
    For $t=0,1,2,\cdots,T-2$ and $x \in \mathbb{R}$,
    \begin{equation*}
        \bar{\omega}_t(x) \leq \bar{\psi}_t(\theta,S^U_t) + \bar{\psi}_t(\theta,\max(x,S^U_t)) - \gamma_t \theta + \alpha \bar{\omega}_{t+1} (\theta + \max(x,S^U_t)).
    \end{equation*}
\end{lemma}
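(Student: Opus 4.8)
The plan is to reduce the whole estimate to a single pointwise inequality for $\omega_t$ and then dispatch the two branches in the definition of $\bar{\omega}_t$ (Definition \ref{Def4.4}) by monotonicity. The core claim I would isolate first is that, for every $y$,
\begin{equation*}
\omega_t(y) \leq \bar{\psi}_t(\theta,y) - \gamma_t\theta + \alpha\,\bar{\omega}_{t+1}(\theta+y). \tag{$\star$}
\end{equation*}
To prove $(\star)$ I would start from $\omega_t(y)=\psi_t(\theta,y)-\gamma_t\theta+\alpha\sum_{n=-1}^{\infty}\bar{\omega}_{t+1}(y-z_n)f_t(n)$ and bound the two variable terms separately: replace $\psi_t(\theta,y)$ by $\bar{\psi}_t(\theta,y)$ using Lemma \ref{Lem4.2}(e), and bound the infinite sum by $\bar{\omega}_{t+1}(\theta+y)$. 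The latter is the heart of the argument: since $z_n=n\theta\geq z_{-1}=-\theta$ for every $n\geq-1$, we have $y-z_n\leq y+\theta$, so the monotonicity of $\bar{\omega}_{t+1}$ (Lemma \ref{Lem4.6}) gives $\bar{\omega}_{t+1}(y-z_n)\leq\bar{\omega}_{t+1}(\theta+y)$ for all $n$; and since the $f_t(n)$ telescope to $\sum_{n=-1}^{\infty}f_t(n)=1-F_t(-\theta)=1$ (because $D_t\geq0$), the weighted sum is at most $\bar{\omega}_{t+1}(\theta+y)$.

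With $(\star)$ in hand, I would split on the two cases of Definition \ref{Def4.4}. If $x\leq S^U_t$, then $\bar{\omega}_t(x)=\eta_t=\bar{\psi}_t(\theta,S^U_t)+\omega_t(S^U_t)$ and $\max(x,S^U_t)=S^U_t$, so applying $(\star)$ at $y=S^U_t$ yields exactly the claimed bound. If $x>S^U_t$, then $\bar{\omega}_t(x)=\max(\eta_t,\omega_t(x))$ and $\max(x,S^U_t)=x$, so I must check that the right-hand side dominates both $\omega_t(x)$ and $\eta_t$. For $\omega_t(x)$, inequality $(\star)$ at $y=x$ gives $\omega_t(x)\leq\bar{\psi}_t(\theta,x)-\gamma_t\theta+\alpha\bar{\omega}_{t+1}(\theta+x)$, and the extra summand $\bar{\psi}_t(\theta,S^U_t)\geq0$ (Lemma \ref{Lem4.2}(b), taking $x'=S^U_t-\theta$) only helps. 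For $\eta_t$, I would first bound $\omega_t(S^U_t)$ by $(\star)$ at $y=S^U_t$, and then push $S^U_t$ up to $x$ using the second-argument monotonicity of $\bar{\psi}_t(\theta,\cdot)$ (Lemma \ref{Lem4.2}(d2), with $z_k=S^U_t-\theta\in Z_\theta$) together with that of $\bar{\omega}_{t+1}$, which is legitimate precisely because $S^U_t<x$ in this branch.

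The only genuinely delicate point is the sum bound, and there the subtlety is the lower index $n=-1$: one must notice that $z_{-1}=-\theta$ is the smallest node, so $y-z_n$ is maximized at $n=-1$ rather than at $n=0$, which is exactly what produces the $+\theta$ shift appearing in the statement. Everything else is bookkeeping with the estimate functions: $\psi_t\leq\bar{\psi}_t$, non-negativity and second-argument monotonicity of $\bar{\psi}_t$, and monotonicity of $\bar{\omega}_{t+1}$, all supplied by Lemmas \ref{Lem4.2} and \ref{Lem4.6}. No induction on $t$ is required here, since the bound relates level $t$ directly to $\bar{\omega}_{t+1}$ and serves only as a building block for the later convergence analysis.
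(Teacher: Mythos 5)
Your proposal is correct and follows essentially the same route as the paper: your inequality $(\star)$ is exactly the paper's opening estimate (obtained from Lemma \ref{Lem4.2}~(e), the monotonicity in Lemma \ref{Lem4.6}, and $\sum_{n\geq -1}f_t(n)=1$), and the rest is the same bookkeeping with non-negativity and monotonicity. The only cosmetic difference is ordering: the paper first dominates both $\eta_t$ and $\omega_t(x)$ by $\bar{\psi}_t(\theta,S^U_t)+\omega_t(\max(x,S^U_t))$ using the monotonicity of $\omega_t$ itself and then applies $(\star)$ once, whereas you apply $(\star)$ pointwise and push up afterwards via Lemma \ref{Lem4.2}~(d2) and Lemma \ref{Lem4.6} -- both are valid non-inductive arguments.
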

\begin{proof}
    By Lemmas \ref{Lem4.6} and \ref{Lem4.2} (e), for $x \in \mathbb{R}$
    \begin{equation*}
        \omega_t(x) = \psi_t(\theta,x) - \gamma_t \theta + \alpha \sum_{n=-1}^{\infty} \bar{\omega}_{t+1}(x-z_n)f_t(n)
        \leq \bar{\psi}_t(\theta,x) - \gamma_t \theta + \alpha \bar{\omega}_{t+1}(x+\theta).
    \end{equation*}

    By Lemma \ref{Lem4.6}, for $x \in \mathbb{R}$
    \begin{equation*}
        \eta_t = \bar{\psi}_t(\theta,S^U_t) + \omega_t(S^U_t) \leq \bar{\psi}_t(\theta,S^U_t) + \omega_t (\max(x,S^U_t)).
    \end{equation*}

    By Lemmas \ref{Lem4.2} (b) and \ref{Lem4.6}, for $x \in \mathbb{R}$
    \begin{equation*}
        \omega_t(x) \leq \bar{\psi}_t(\theta,S^U_t) + \omega_t (\max(x,S^U_t)),
    \end{equation*}
    Thus, for $x \in \mathbb{R}$
    \begin{align*}
        \bar{\omega}_t(x) &\leq \bar{\psi}_t(\theta,S^U_t) + \omega_t (\max(x,S^U_t)) \\
        &\leq \bar{\psi}_t(\theta,S^U_t) + \bar{\psi}_t(\theta,\max(x,S^U_t)) - \gamma_t \theta + \alpha \bar{\omega}_{t+1} (\theta + \max(x,S^U_t)).
    \end{align*}
    \qed
\end{proof}

\begin{lemma}\label{Lem4.9}
    For $z_j \in Z_{\theta}$ and $t=0,1,2,\cdots,T-2$,
    \begin{equation*}
        \bar{\psi}_t (\theta,z_j) \leq \gamma_t \theta + \theta \sum_{n=1}^{T-1-t} \alpha^n \gamma_{t+n} \prod_{m=0}^{n-1} F_{t+m}(z_j + (m+1)\theta - s_{t+m+1}).
    \end{equation*}
\end{lemma}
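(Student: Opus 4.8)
The plan is to prove the bound by backward induction on $t$, running from $t=T-1$ down to $t=0$, driven entirely by the recursion (\ref{Equ4.3}). Write $B_t(z)$ for the right-hand side of the claimed inequality, with the convention that the empty sum gives $B_{T-1}(z)=\gamma_{T-1}\theta$; then the claim is exactly $\bar{\psi}_t(\theta,z_j)\le B_t(z_j)$, and it is convenient to also establish it for $t=T-1$ so that the induction has a clean base. For $t=T-1$, Definition \ref{Def4.3} gives $\bar{\psi}_{T-1}(\theta,z_j)=\gamma_{T-1}\theta=B_{T-1}(z_j)$, so the base case is immediate. For the induction step I fix $t\le T-2$, assume the bound at level $t+1$, and distinguish two cases. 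When $z_j<s_{t+1}-\theta$, (\ref{Equ4.3}) gives $\bar{\psi}_t(\theta,z_j)=\gamma_t\theta$, and since $\gamma_{t+n}\ge 0$ and every cumulative-distribution factor is nonnegative, $B_t(z_j)\ge\gamma_t\theta$, so the inequality holds trivially.

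The substantive case is $z_j\ge s_{t+1}-\theta$, where (\ref{Equ4.3}) gives $\bar{\psi}_t(\theta,z_j)=\gamma_t\theta+\alpha\sum_{m=-1}^{N-1}\bar{\psi}_{t+1}(\theta,z_j-z_m)f_t(m)$ with $z_{N-1}\le z_j-s_{t+1}<z_N$. I would insert the induction hypothesis $\bar{\psi}_{t+1}(\theta,z_j-z_m)\le B_{t+1}(z_j-z_m)$ and split the resulting expression into the $\gamma_{t+1}\theta$-part and the double-sum part. Two elementary facts do all the work. First, the demand is nonnegative, so $F_t(-\theta)=0$ and the probabilities telescope: $\sum_{m=-1}^{N-1}f_t(m)=F_t(z_N)-F_t(-\theta)=F_t(z_N)$; moreover $z_N=z_{N-1}+\theta\le z_j+\theta-s_{t+1}$, so monotonicity of $F_t$ gives $\sum_{m=-1}^{N-1}f_t(m)\le F_t(z_j+\theta-s_{t+1})$. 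Applied to the $\gamma_{t+1}\theta$-part this produces exactly $\gamma_t\theta$ together with the $n=1$ term of $B_t(z_j)$.

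For the double-sum part I would use the range of summation, $-1\le m\le N-1$, which forces $z_m\ge z_{-1}=-\theta$ and hence $z_j-z_m\le z_j+\theta$. Consequently each factor $F_{t+1+p}\bigl((z_j-z_m)+(p+1)\theta-s_{t+2+p}\bigr)$ is dominated, by monotonicity of the distribution functions, by its $m$-independent value $F_{t+1+p}\bigl(z_j+(p+2)\theta-s_{t+2+p}\bigr)$; since all factors lie in $[0,1]$ the whole product is dominated by the corresponding $m$-free product. Pulling that product out of the sum over $m$ and applying the same telescoping bound $\sum_{m=-1}^{N-1}f_t(m)\le F_t(z_j+\theta-s_{t+1})$ leaves a single factor $F_t(z_j+\theta-s_{t+1})$ in front. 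Re-indexing the inner product ($m=p+1$) then matches these terms exactly with the $n\ge 2$ terms of $B_t(z_j)$, so adding the two parts gives $\bar{\psi}_t(\theta,z_j)\le B_t(z_j)$ and closes the induction.

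I expect the only real difficulty to be the bookkeeping in the last step: keeping the three index families aligned (the outer horizon-length sum, the product over $p$, and the probability sum over $m$) while shifting $z_j-z_m$ up to $z_j+\theta$ and re-indexing the product so that its leading factor becomes $F_t(z_j+\theta-s_{t+1})$. There is no analytic subtlety beyond the nonnegativity $F_t(-\theta)=0$, monotonicity of the $F$'s, and the fact that products of numbers in $[0,1]$ respect termwise inequalities; the care lies entirely in the index manipulation, and Lemma \ref{Lem4.2} is not needed once (\ref{Equ4.3}) is granted.
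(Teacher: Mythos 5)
Your proof is correct and follows exactly the route the paper intends: the paper's own proof of Lemma \ref{Lem4.9} is the one-line remark that it ``follows from (\ref{Equ4.3}) and the induction method,'' and your backward induction --- using $F_t(z_{-1})=F_t(-\theta)=0$ to telescope $\sum_{m=-1}^{N-1}f_t(m)=F_t(z_N)\leq F_t(z_j+\theta-s_{t+1})$, bounding $z_j-z_m\leq z_j+\theta$ inside the inner products, and re-indexing to match the terms of the claimed bound --- is precisely the bookkeeping the paper leaves to the reader. No difference in approach; the details check out.
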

\begin{proof}
    This follows from (\ref{Equ4.3}) and the induction method. \qed
\end{proof}

 Two relatively simple upper bounds of the error $V_t(x) - V^*_t(x) $ are given in the following Theorem 4.3.

\begin{theorem}\label{Thm4.3}
    For $z_j \in Z_{\theta}$ and $t=0,1,2,\cdots,T-2$,
    \begin{equation*}
        \bar{\omega}_t(z_j) \leq U_t^{\omega}(z_j) \leq \bar{U}_t^{\omega}(\theta),
    \end{equation*}
    where
    \begin{align*}
        U_t^{\omega}(z_j) & \coloneqq \theta \sum_{i=0}^{T-2-t} \alpha^i \gamma_{t+i} + \theta \sum_{i=0}^{T-2-t} \sum_{n=1}^{T-1-t-i} \alpha^{n+i} \gamma_{t+i+n} \\&\quad \cdot \Bigg[ \prod_{m=0}^{n-1} F_{t+i+m}(S^U_{t+i} + (m+1)\theta - s_{t+i+m+1}) \\&\quad + \prod_{m=0}^{n-1} F_{t+i+m}(\max (\mu_t(i),S^U_{t+i}) + (m+1)\theta - s_{t+i+m+1}) \Bigg],
    \end{align*}
    $\mu_t(0) \coloneqq z_j$, $\mu_t(i) \coloneqq \max(\mu_t(i-1),S^U_{t+i-1})+\theta$, \    $i=1,2,\cdots,T-2-t$,
    and
    \begin{equation*}
        \bar{U}_t^{\omega}(\theta) \coloneqq \theta \sum_{i=0}^{T-2-t}\alpha^i \gamma_{t+i} + 2\theta \sum_{i=0}^{T-2-t} \sum_{n=1}^{T-1-t-i} \alpha^{n+i} \gamma_{t+i+n}.
    \end{equation*}
\end{theorem}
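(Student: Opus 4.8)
The plan is to establish the first inequality $\bar{\omega}_t(z_j) \leq U_t^{\omega}(z_j)$ by unfolding the recursion in Lemma \ref{Lem4.8} down to the terminal index $T-1$, then substituting the explicit estimate from Lemma \ref{Lem4.9} for every $\bar{\psi}$ term that appears; the second inequality $U_t^{\omega}(z_j) \leq \bar{U}_t^{\omega}(\theta)$ will then follow at once by bounding each product of distribution functions by $1$.

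First I would iterate Lemma \ref{Lem4.8}. The key observation is that the argument $\theta + \max(x,S^U_t)$ fed into $\bar{\omega}_{t+1}$ is exactly the recursively defined quantity $\mu_t(1)$; more generally, since $\mu_t(i) = \max(\mu_t(i-1),S^U_{t+i-1}) + \theta$ with $\mu_t(0)=z_j$, applying Lemma \ref{Lem4.8} at stage $i$ with argument $\mu_t(i)$ produces the next argument $\theta + \max(\mu_t(i),S^U_{t+i}) = \mu_t(i+1)$. A short induction on $i$, terminating because $\bar{\omega}_{T-1}\equiv 0$, therefore yields
\begin{equation*}
    \bar{\omega}_t(z_j) \leq \sum_{i=0}^{T-2-t} \alpha^i \left[ \bar{\psi}_{t+i}(\theta,S^U_{t+i}) + \bar{\psi}_{t+i}(\theta,\max(\mu_t(i),S^U_{t+i})) - \gamma_{t+i}\theta \right].
\end{equation*}
I would also record that $z_j, S^U_{t+i} \in Z_{\theta}$ and that the recursion keeps every $\mu_t(i)$ in $Z_{\theta}$, so Lemma \ref{Lem4.9} is applicable to both $\bar{\psi}$ terms.

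Next I substitute Lemma \ref{Lem4.9} into each $\bar{\psi}$ term. Each substitution contributes a leading $\gamma_{t+i}\theta$ together with a sum of products of distribution functions; the two leading terms combine with the $-\gamma_{t+i}\theta$ to leave a single $\gamma_{t+i}\theta$, which after summing over $i$ gives the first summand $\theta\sum_{i=0}^{T-2-t}\alpha^i\gamma_{t+i}$ of $U_t^{\omega}(z_j)$. The remaining product-sums, one evaluated at $S^U_{t+i}$ and one at $\max(\mu_t(i),S^U_{t+i})$, assemble precisely into the bracketed expression in the definition of $U_t^{\omega}(z_j)$ once the inner index $n$ is read off as ranging from $1$ to $T-1-t-i$ and the $\alpha$-powers coalesce into $\alpha^{n+i}$. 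This establishes the first inequality.

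For the second inequality I use that each $F_{t+i+m}(\cdot) \in [0,1]$, so every product $\prod_{m=0}^{n-1} F_{t+i+m}(\cdot) \leq 1$; hence the bracket in $U_t^{\omega}(z_j)$ is at most $2$, and replacing it by $2$ gives exactly $\bar{U}_t^{\omega}(\theta)$. I expect the main obstacle to be organizational rather than conceptual: keeping the double index $(i,n)$ bookkeeping straight while verifying both that the iterated arguments match the definition of $\mu_t(i)$ and that the inner summation range produced by Lemma \ref{Lem4.9} aligns with the stated form of $U_t^{\omega}(z_j)$.
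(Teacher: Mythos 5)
Your proposal is correct and follows essentially the same route as the paper, whose proof is simply the statement that the result ``follows from Lemmas \ref{Lem4.8} and \ref{Lem4.9} and the induction method''; your iteration of Lemma \ref{Lem4.8} along the arguments $\mu_t(i)$ (noting they stay in $Z_{\theta}$), substitution of Lemma \ref{Lem4.9}, and bounding the distribution-function products by $1$ is exactly the intended argument, spelled out in full.
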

\begin{proof}
    This follows from Lemmas \ref{Lem4.8} and \ref{Lem4.9} and the induction method. \qed
\end{proof}

From Theorem \ref{Thm4.2}, Lemma \ref{Lem4.6} and Theorem \ref{Thm4.3}, for $z_{j-1} < x\leq z_j$,
\begin{equation*}
    V_0(x) - V^*_0(x) \leq \bar{\omega}_0(x) \leq \bar{\omega}_0(z_j) \leq U_0^{\omega}(z_j) \leq \bar{U}_0^{\omega}(\theta)
\end{equation*}
That is, $U_0^{\omega}(z_j)$ and $\bar{U}_0^{\omega}(\theta)$ are two rather simple upper bounds of the error $V_0(x) - V^*_0(x)$. Furthermore, $\bar{U}_0^{\omega}(\theta)$ is independent of $x$ and $\bar{U}_0^{\omega}(\theta) \rightarrow 0$ as $\theta \rightarrow 0$. The latter will help us to a convergence result for the approximate optimal policy $\pi$.

\subsection{The Upper Bounds of Error $\epsilon_t (x)$}
\label{SubSecErr2}

In this subsection,
 we shall provide some upper bounds of $\epsilon_t (x)$ . We introduce the function $\bar{\epsilon}_t(x) $(see Definition 4.5).  Based on  the properties of $\bar{\psi}_{t} $  and $\bar{\epsilon}_t $   , we prove that $\bar{\epsilon}_t(x) $  is an upper bound of  $\epsilon_t (x)$(see Theorem 4.4), and provide Algorithm 4.2 to find this upper bound.  Similar to Section 4.1, we also provide two relatively simple upper bounds of $\epsilon_t (x)$(see Theorem 4.5).  
\begin{lemma}\label{Lem4.10}
    If $0 \leq x-x' \leq \theta$, $z_{k-1} < x \leq z_k$, and $t=0,1,2,\cdots,T-1$, then
    \begin{equation*}
        V_t(x') - V_t(x) \leq 2 \bar{\varphi}_t (\theta,z_k).
    \end{equation*}
\end{lemma}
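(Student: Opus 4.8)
The plan is to start from the one-sided bound already available. By Lemma~\ref{Lem4.5}, since $x'\le x$ we have $V_t(x')-V_t(x)\le \bar\varphi_t(x-x',x)$, and because $0\le x-x'\le\theta$ the monotonicity of $\bar\varphi_t$ in its first argument (Lemma~\ref{Lem4.2}(c), comparing against the point $x-\theta$) gives $\bar\varphi_t(x-x',x)\le\bar\varphi_t(\theta,x)$. Thus the whole statement reduces to the purely geometric inequality
\[
\bar\varphi_t(\theta,x)\le 2\,\bar\varphi_t(\theta,z_k),\qquad z_{k-1}<x\le z_k,
\]
i.e. to replacing the off-grid second argument $x$ by its grid ceiling $z_k$ at the cost of a factor $2$. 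I would prove this by downward induction on $t$, the base case $t=T-1$ being immediate: $\bar\varphi_{T-1}=\varphi_{T-1}$ equals $\gamma_{T-1}\theta$ when $x\ge s_{T-1}$ and $0$ otherwise, and $x\ge s_{T-1}$ forces $z_k\ge s_{T-1}$, so the inequality holds even with constant $1$.

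For the inductive step I would carry, besides the target inequality (call it $(\mathrm A_t)$: $\bar\varphi_t(\theta,w)\le 2\bar\varphi_t(\theta,\lceil w\rceil)$), three companion estimates proven simultaneously: the same doubling for $\bar\psi_t$, namely $\bar\psi_t(\theta,w)\le 2\bar\psi_t(\theta,\lceil w\rceil)$, and the two grid-point doublings $\bar\psi_t(2\theta,z)\le 2\bar\psi_t(\theta,z)$ and $\bar\varphi_t(2\theta,z)\le 2\bar\varphi_t(\theta,z)$ for $z\in Z_\theta$. Their dependencies close downward: the $\bar\psi$-doubling at level $t$ follows from $(\mathrm A_{t+1})$ by inserting it termwise into the recursion of Definition~\ref{Def4.3} and using that the sum defining $\bar\psi_t(\cdot,z_k)$ has at least as many terms as the one for $x$, the extra terms being nonnegative by Lemma~\ref{Lem4.2}(b); the two grid-point doublings at level $t$ follow from each other and from the level-$(t+1)$ grid doubling, again termwise through the recursion. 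Finally $(\mathrm A_t)$ itself comes from the case split in Definition~\ref{Def4.3}: the branch $x<s_t$ is trivial, the branch $x\ge s_t+\theta$ gives $\bar\varphi_t(\theta,x)=\bar\psi_t(\theta,x)$ and is exactly the $\bar\psi$-doubling, and the boundary branch $s_t\le x<s_t+\theta$ is handled using $s_t\in Z_\theta$ (valid for $t\le T-2$ by (\ref{Equ3.8})--(\ref{Equ3.9})) together with Lemma~\ref{Lem4.2}(d2) and the grid-point doubling.

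The hard part is precisely this boundary branch $s_t\le x<s_t+\theta$, and it is where the constant $2$ is genuinely needed. There $\bar\varphi_t(\theta,x)=\bar\psi_t(x-s_t+\theta,x)$, whose first argument $x-s_t+\theta$ has been inflated into $[\theta,2\theta)$; one cannot simply push the second argument up to $z_k$, because the monotonicity of $\bar\psi_t$ in the second variable (Lemma~\ref{Lem4.2}(d2)) is available only when the first argument is a grid offset of the second. The observation that unlocks it is that $s_t$ is a grid point, so $x-s_t+\theta=x-z_{k-2}$ is such a grid offset; Lemma~\ref{Lem4.2}(d2) then raises the second argument from $x$ to $z_k$, monotonicity in the first argument raises $x-z_{k-2}$ up to $2\theta$, and the grid-point doubling $\bar\psi_t(2\theta,z_k)\le 2\bar\psi_t(\theta,z_k)=2\bar\varphi_t(\theta,z_k)$ finishes the estimate. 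I expect the bookkeeping of which sub-branch occurs (whether $z_{k-1}<s_t$, which gives the cleaner factor $1$, or $z_{k-1}=s_t$, which forces the factor $2$) to be the most delicate step, with the grid-alignment of $s_t$ being the crucial structural input; without it the naive pivoting argument would only yield a factor $3$.
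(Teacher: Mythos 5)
Your proposal is correct --- the reduction via Lemma~\ref{Lem4.5} and Lemma~\ref{Lem4.2}(c), the four-statement simultaneous downward induction, and the grid-alignment bookkeeping in the boundary branch all check out --- but it is a genuinely different argument from the paper's. The paper needs no new induction at all: it telescopes through the intermediate grid point $z_{k-1}$, writing $V_t(x')-V_t(x)=\bigl[V_t(x')-V_t(z_{k-1})\bigr]+\bigl[V_t(z_{k-1})-V_t(x)\bigr]$ with a case split on $x'\le z_{k-1}$ versus $x'>z_{k-1}$, and bounds each bracket separately by $\bar{\varphi}_t(\theta,z_k)$, the factor $2$ being simply the sum of the two bounds. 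The purpose of splitting at $z_{k-1}$ is precisely the obstruction you identified, namely that Lemma~\ref{Lem4.2}(d2) requires a grid-offset first argument: after the split, the piece $V_t(z_{k-1})-V_t(x)$ is bounded via Lemma~\ref{Lem4.5} by $\bar{\varphi}_t(x-z_{k-1},x)$, whose first argument \emph{is} a grid offset, so (d2) applies directly; and the piece $V_t(x')-V_t(z_{k-1})$ either has the grid point $z_{k-1}$ as its second argument (case $x'\le z_{k-1}$) or is an upward difference (case $x'>z_{k-1}$), which Lemma~\ref{Lem4.3} bounds by the \emph{unbarred} $\varphi_t$, for which the unrestricted second-argument monotonicity (d1) holds, with (e) converting back to $\bar{\varphi}_t$. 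So the paper evades the obstruction by alternating between the barred and unbarred estimate functions rather than by proving a doubling inequality. By comparison, your route is substantially longer and relies on the structural fact $s_t\in Z_\theta$ for $t\le T-2$ (an output property of Algorithm 3.1), which the paper's proof never uses; in exchange, it isolates a reusable grid-rounding principle, $\bar{\varphi}_t(\theta,x)\le 2\bar{\varphi}_t(\theta,z_k)$ for $z_{k-1}<x\le z_k$, which the paper's telescoping argument does not yield.
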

\begin{proof}
    Let $x'\leq z_{k-1}$. From Lemmas \ref{Lem4.5}, \ref{Lem4.2} (c) and \ref{Lem4.2} (d),
    \begin{align*}
        V_t(x') - V_t(x) &= V_t(x') - V_t(z_{k-1}) + V_t(z_{k-1}) - V_t(x)\\
        & \leq \bar{\varphi}_t(z_{k-1}-x',z_{k-1}) + \bar{\varphi}_t(x-z_{k-1},x)\\
        & \leq \bar{\varphi}_t(\theta,z_{k-1}) + \bar{\varphi}_t(x-z_{k-1},z_k)\\
        & \leq \bar{\varphi}_t(\theta,z_{k}) + \bar{\varphi}_t(\theta,z_k) = 2 \bar{\varphi}_t (\theta,z_k).
    \end{align*}

    Let $x' > z_{k-1}$. From Lemmas \ref{Lem4.3}, \ref{Lem4.5}, \ref{Lem4.2} (c), \ref{Lem4.2} (d) and \ref{Lem4.2} (e),
    \begin{align*}
        V_t(x') - V_t(x) &= V_t(x') - V_t(z_{k-1}) + V_t(z_{k-1}) - V_t(x)\\
        & \leq \varphi_t(x'-z_{k-1},x') + \bar{\varphi}_t(x-z_{k-1},x)\\
        & \leq \varphi_t(\theta,x') + \bar{\varphi}_t(x-z_{k-1},z_k)\\
        & \leq \varphi_t(\theta,z_{k}) + \bar{\varphi}_t(\theta,z_k) \\
        & \leq \bar{\varphi}_t(\theta,z_{k}) + \bar{\varphi}_t(\theta,z_k) = 2 \bar{\varphi}_t (\theta,z_k).
    \end{align*}
    \qed
\end{proof}

\begin{lemma}\label{Lem4.11}
    For $z_{k-1} < x \leq z_k$ and $t=0,1,2,\cdots,T-2$,
    \begin{equation*}
        \alpha A_{t+1}(x) \leq 2\bar{\psi}_t(\theta,z_k) - 2\gamma_t \theta.
    \end{equation*}
\end{lemma}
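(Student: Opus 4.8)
The plan is to recognize $A_{t+1}(x)$ as the discretization error of the expectation $E[V_{t+1}(x-D_t)]$ and to control it term by term using the Lipschitz-type bound of Lemma~\ref{Lem4.10}. First I would split the nonnegative support of $D_t$ into the grid intervals $(z_n,z_{n+1}]$, $n\ge -1$. Since $f_t(n)=F_t(z_{n+1})-F_t(z_n)=P(z_n<D_t\le z_{n+1})$, the discrete term $V_{t+1}(x-z_n)f_t(n)$ equals $E[V_{t+1}(x-z_n)\mathbf 1_{\{z_n<D_t\le z_{n+1}\}}]$, so that
\begin{equation*}
A_{t+1}(x)=\sum_{n=-1}^{\infty}E\bigl[\bigl(V_{t+1}(x-D_t)-V_{t+1}(x-z_n)\bigr)\mathbf 1_{\{z_n<D_t\le z_{n+1}\}}\bigr].
\end{equation*}

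On the event $\{z_n<D_t\le z_{n+1}\}$ one has $x-D_t\le x-z_n$ and $0\le (x-z_n)-(x-D_t)=D_t-z_n\le\theta$, and, because $z_{k-1}<x\le z_k$, the point $x-z_n$ lies in $(z_{k-n-1},z_{k-n}]$. Hence Lemma~\ref{Lem4.10} applied at index $t+1$ yields, pointwise on that event,
\begin{equation*}
V_{t+1}(x-D_t)-V_{t+1}(x-z_n)\le 2\bar{\varphi}_{t+1}(\theta,z_{k-n}).
\end{equation*}
Since this bound is constant in $D_t$ on the event, taking expectations and summing gives $A_{t+1}(x)\le 2\sum_{n=-1}^{\infty}\bar{\varphi}_{t+1}(\theta,z_{k-n})f_t(n)$.

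To finish, I would collapse this sum back into $\bar{\psi}_t$. Using $z_{k-n}=z_k-z_n$ together with Lemma~\ref{Lem4.2}(a) evaluated at $x=z_k$ and $x-x'=\theta$,
\begin{equation*}
\bar{\psi}_t(\theta,z_k)=\gamma_t\theta+\alpha\sum_{n=-1}^{\infty}\bar{\varphi}_{t+1}(\theta,z_k-z_n)f_t(n),
\end{equation*}
so that $\sum_{n=-1}^{\infty}\bar{\varphi}_{t+1}(\theta,z_{k-n})f_t(n)=\alpha^{-1}\bigl(\bar{\psi}_t(\theta,z_k)-\gamma_t\theta\bigr)$. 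Substituting and multiplying through by $\alpha$ gives exactly $\alpha A_{t+1}(x)\le 2\bar{\psi}_t(\theta,z_k)-2\gamma_t\theta$, and the factor $2$ is inherited directly from Lemma~\ref{Lem4.10}.

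The steps are conceptually straightforward, so the only genuinely delicate points are bookkeeping. I must check that the intervals $(z_n,z_{n+1}]$, $n\ge -1$, exhaust the support of $D_t$ so that the $f_t(n)$ sum to one and the conditioning identity above is exact; justify interchanging expectation with the infinite sum (monotone convergence suffices, since $\bar{\varphi}_{t+1}\ge 0$ by Lemma~\ref{Lem4.2}(b)); and carefully verify the index shift $x-z_n\in(z_{k-n-1},z_{k-n}]$ so that Lemma~\ref{Lem4.10} is invoked with the correct grid cell $z_{k-n}$. Once these alignments are in place the estimate is immediate.
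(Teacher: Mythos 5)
Your proof is correct and follows essentially the same route as the paper, whose own proof of Lemma~\ref{Lem4.11} is simply the citation of Lemmas~\ref{Lem4.10} and~\ref{Lem4.2}~(a): you decompose $A_{t+1}(x)$ over the grid cells $(z_n,z_{n+1}]$, apply Lemma~\ref{Lem4.10} at index $t+1$ on each cell (with the correctly shifted grid point $z_{k-n}$), and then collapse the resulting sum via the identity of Lemma~\ref{Lem4.2}~(a) at $(\theta,z_k)$. Your bookkeeping of the index shift and the interchange of sum and expectation is exactly the detail the paper leaves implicit, and it checks out.
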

\begin{proof}
    This follows from Lemmas \ref{Lem4.10} and \ref{Lem4.2} (a). \qed
\end{proof}

\begin{definition}\label{Def4.5}
    For any $z_k \in Z_{\theta}$, define
    \begin{equation*}
        \bar{\epsilon}_t (z_k) \coloneqq \begin{cases} \alpha \gamma_{T-1} \theta,& t=T-2,\\
         \begin{split} &\alpha^{T-1-t} \gamma_{T-1} \theta + \sum_{i=0}^{T-3-t} 2\alpha^i \\&\quad \cdot [ \bar{\psi}_{t+i} ( \theta,\max(\lambda_t(i),S_{t+i}) ) - \gamma_{t+i} \theta ],\end{split}& t=0,1,2,\cdots,T-3, \end{cases}
    \end{equation*}
    where $\lambda_t(0) \coloneqq z_k$, $\lambda_t(i) \coloneqq \max(\lambda_t(i-1),S_{t+i-1})+\theta$,  $i=1,2,\cdots,T-3-t$.
\end{definition}

\begin{lemma}\label{Lem4.12}
    For $z_i \leq z_j$ and $t=0,1,2,\cdots,T-2$,
    \begin{equation*}
        \bar{\epsilon}_t(z_i) \leq \bar{\epsilon}_t(z_j).
    \end{equation*}
\end{lemma}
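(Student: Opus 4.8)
The plan is to treat $\bar{\epsilon}_t(z_k)$ as an explicit function of the grid point $z_k$ (Definition \ref{Def4.5} is not recursive in $t$) and to show that every ingredient depending on $z_k$ is nondecreasing. The case $t=T-2$ is immediate, since $\bar{\epsilon}_{T-2}(z_k)=\alpha\gamma_{T-1}\theta$ does not depend on $z_k$. So I would fix $t\le T-3$ and note that the only dependence on $z_k$ enters through the quantities $\lambda_t(i)$, which are generated from $\lambda_t(0)=z_k$ by the recursion $\lambda_t(i)=\max(\lambda_t(i-1),S_{t+i-1})+\theta$.

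First I would prove, by induction on $i$, that $\lambda_t(i)$ is nondecreasing in the starting value $z_k$. The base case $i=0$ is the identity $\lambda_t(0)=z_k$, and the inductive step follows because $u\mapsto\max(u,S_{t+i-1})+\theta$ is nondecreasing, so a larger $z_k$ yields a larger $\lambda_t(i-1)$ and hence a larger $\lambda_t(i)$. In particular, if $z_i\le z_j$ then the value of $\lambda_t(i)$ computed from $z_i$ is at most the value computed from $z_j$, whence $\max(\lambda_t(i),S_{t+i})$ is also nondecreasing in the starting point. I would record here the grid fact needed below: since $z_k\in Z_\theta$ and $S_m\in Z_\theta$ for $m\le T-2$ (by (\ref{Equ3.8})), an easy induction gives $\lambda_t(i)\in Z_\theta$, and therefore $\max(\lambda_t(i),S_{t+i})\in Z_\theta$; note that all indices occurring in $\bar{\epsilon}_t$ are $\le T-3$, so the exceptional value $S_{T-1}$ (which need not lie in $Z_\theta$) never appears.

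Next I would convert the monotonicity of the arguments into monotonicity of the summands. Writing $a$ and $b$ for the two values of $\max(\lambda_t(i),S_{t+i})$ obtained from $z_i$ and $z_j$, the previous step gives $a\le b$ with $a,b\in Z_\theta$. Applying Lemma \ref{Lem4.2}(d2) with $x=a\in Z_\theta$, grid point $w=a-\theta\in Z_\theta$, and $z=b$ (so that $x-w=\theta\ge 0$ and $x\le z$) yields $\bar{\psi}_{t+i}(\theta,a)\le\bar{\psi}_{t+i}(\theta,b)$. Hence every bracketed term $\bar{\psi}_{t+i}(\theta,\max(\lambda_t(i),S_{t+i}))-\gamma_{t+i}\theta$ in Definition \ref{Def4.5} is nondecreasing in $z_k$.

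Finally, since the weights $2\alpha^i$ are nonnegative and the leading term $\alpha^{T-1-t}\gamma_{T-1}\theta$ is independent of $z_k$, summing the nondecreasing terms preserves the inequality and gives $\bar{\epsilon}_t(z_i)\le\bar{\epsilon}_t(z_j)$. The only genuine care is the bookkeeping in the first two steps, namely verifying that all arguments $\max(\lambda_t(i),S_{t+i})$ remain on the grid $Z_\theta$ so that Lemma \ref{Lem4.2}(d2) (which asserts monotonicity of $\bar{\psi}_{t+i}(\theta,\cdot)$ only along grid points) is applicable; this is where I expect the main, though routine, obstacle to lie.
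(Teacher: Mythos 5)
Your proposal is correct and takes essentially the same approach as the paper, whose entire proof is the one line ``this follows by Lemma \ref{Lem4.2} (d)''. You have simply made explicit the bookkeeping the paper leaves implicit: that $\lambda_t(i)$ is nondecreasing in $z_k$ and remains on the grid $Z_\theta$ (only $S_m$ with $m\leq T-3$ appear, so the non-grid value $S_{T-1}$ never enters), which is exactly what is needed to apply Lemma \ref{Lem4.2} (d2) termwise to the sum in Definition \ref{Def4.5}.
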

\begin{proof}
    This follows by Lemma \ref{Lem4.2} (d). \qed
\end{proof}

The following theorem gives an upper bound of the error $\epsilon_t(x)$.
\begin{theorem}\label{Thm4.4}
    (one of the main results). For $z_{k-1} <x \leq z_k$ and $t=0,1,2,\cdots,T-2$,
    \begin{equation*}
        \epsilon_t(x) \leq \bar{\epsilon}_t(z_k).
    \end{equation*}
\end{theorem}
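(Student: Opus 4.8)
The plan is to prove $\epsilon_t(x)\le\bar\epsilon_t(z_k)$ by downward induction on $t$, from $t=T-2$ down to $t=0$. The structural fact I would exploit is that the recursion for $\epsilon_t$ in Definition~\ref{Def4.1} splits the error into a one-step numerical-integration error $\alpha A_{t+1}(\cdot)$ and a propagated error $\alpha E[\epsilon_{t+1}(\cdot-D_t)]$; I would bound the former by $2[\bar\psi_t(\theta,\max(z_k,S_t))-\gamma_t\theta]$ and the latter by $\alpha\bar\epsilon_{t+1}(\max(z_k,S_t)+\theta)$, so that their sum is exactly $\bar\epsilon_t(z_k)$. Since $\epsilon_t$ is evaluated at $S_t$ when $x<s_t$ and at $x$ when $x\ge s_t$, I would treat both cases uniformly by always passing to the grid point $\max(z_k,S_t)\in Z_\theta$, using $z_k\ge x$ and $S_t\in Z_\theta$.

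For the base case $t=T-2$ we have $\epsilon_{T-1}\equiv0$, so $\epsilon_{T-2}(x)$ reduces to $\alpha A_{T-1}(x)$ (or $\alpha A_{T-1}(S_{T-2})$ when $x<s_{T-2}$). The essential point here is that $V_{T-1}$ is globally $\gamma_{T-1}$-Lipschitz: on $[s_{T-1},\infty)$ it equals $C_{T-1}$, which is $\gamma_{T-1}$-Lipschitz by Condition~\ref{Cond4.1}; it is constant on $(-\infty,s_{T-1})$; and it is continuous at $s_{T-1}$ because $s_{T-1}=\bar Y_{T-1}$ was chosen in Definitions~\ref{Def3.2}--\ref{Def3.3} so that $C_{T-1}(\bar Y_{T-1})=C_{T-1}(C^m_{T-1})+K_{T-1}=H_{T-1}(S_{T-1})+K_{T-1}$. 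Writing $A_{T-1}(x)=\sum_n\int_{(z_n,z_{n+1}]}[V_{T-1}(x-u)-V_{T-1}(x-z_n)]\,dF_{T-2}(u)$ and using $0\le u-z_n\le\theta$ on each cell then gives $|A_{T-1}(x)|\le\gamma_{T-1}\theta$, so $\epsilon_{T-2}(x)\le\alpha\gamma_{T-1}\theta=\bar\epsilon_{T-2}(z_k)$.

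For the inductive step $t\le T-3$ I would assume the bound at level $t+1$. The integration term is handled by Lemma~\ref{Lem4.11}, giving $\alpha A_{t+1}(x)\le2\bar\psi_t(\theta,z_k)-2\gamma_t\theta$ when $x\ge s_t$ and, applied at the grid point $S_t$, $\alpha A_{t+1}(S_t)\le2\bar\psi_t(\theta,S_t)-2\gamma_t\theta$ when $x<s_t$; in both cases the monotonicity of $\bar\psi_t(\theta,\cdot)$ from Lemma~\ref{Lem4.2}(d) upgrades this to $2[\bar\psi_t(\theta,\max(z_k,S_t))-\gamma_t\theta]$. For the propagated term, the induction hypothesis gives $\epsilon_{t+1}(x-u)\le\bar\epsilon_{t+1}(\lceil x-u\rceil_\theta)$, and since $x-u\le z_k$ the monotonicity of $\bar\epsilon_{t+1}$ (Lemma~\ref{Lem4.12}) yields $\epsilon_{t+1}(x-u)\le\bar\epsilon_{t+1}(z_k)$ pointwise; integrating, $\alpha E[\epsilon_{t+1}(x-D_t)]\le\alpha\bar\epsilon_{t+1}(z_k)\le\alpha\bar\epsilon_{t+1}(\max(z_k,S_t)+\theta)$, and likewise with $S_t$ replacing $x$ when $x<s_t$. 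It then remains to verify that $2[\bar\psi_t(\theta,\max(z_k,S_t))-\gamma_t\theta]+\alpha\bar\epsilon_{t+1}(\max(z_k,S_t)+\theta)=\bar\epsilon_t(z_k)$, which is a direct unfolding of Definition~\ref{Def4.5} once one checks the reindexing identity $\lambda_{t+1}(i-1)=\lambda_t(i)$, valid exactly when the chain at level $t+1$ is started from $\max(z_k,S_t)+\theta=\lambda_t(1)$.

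I expect the main obstacle to be the base case rather than the induction. Lemma~\ref{Lem4.11} alone only delivers the factor-$2$ estimate $2\bar\psi_{T-2}(\theta,z_k)-2\gamma_{T-2}\theta$, which can be as large as $2\alpha\gamma_{T-1}\theta$ and so fails to reach the sharper value $\alpha\gamma_{T-1}\theta$ defining $\bar\epsilon_{T-2}$; obtaining the correct constant genuinely requires the Lipschitz continuity of $V_{T-1}$, hence the continuity at $s_{T-1}$ built into Definitions~\ref{Def3.2}--\ref{Def3.3}. For $t\le T-3$ the factor $2$ is unavoidable because $V_{t+1}$ has true jumps governed by the setup costs, which is precisely why the two-sided estimates $\psi_t$ and $\bar\psi_t$ are needed. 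The remaining delicate point is the purely combinatorial bookkeeping identity $\lambda_{t+1}(i-1)=\lambda_t(i)$ that collapses the two one-step estimates back into the closed form of $\bar\epsilon_t(z_k)$.
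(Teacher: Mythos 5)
Your proposal is correct and follows essentially the same route as the paper's proof: downward induction, with Lemma~\ref{Lem4.11} plus Lemma~\ref{Lem4.2}~(d) bounding the one-step term $\alpha A_{t+1}(\cdot)$ by $2[\bar{\psi}_t(\theta,\max(z_k,S_t))-\gamma_t\theta]$, the induction hypothesis plus Lemma~\ref{Lem4.12} bounding the propagated term by $\alpha\bar{\epsilon}_{t+1}(\max(z_k,S_t)+\theta)$, and the unfolding of Definition~\ref{Def4.5} closing the recursion. Your base-case argument via the global $\gamma_{T-1}$-Lipschitz continuity of $V_{T-1}$ is just a repackaging of the paper's use of Lemma~\ref{Lem4.5} with $\bar{\varphi}_{T-1}(\theta,\cdot)\leq\gamma_{T-1}\theta$ (both rest on the continuity of $V_{T-1}$ at $s_{T-1}$ built into Definition~\ref{Def3.2}), and your observation that the factor $2$ cannot be avoided for $t\leq T-3$ correctly explains why the sharper constant appears only at $t=T-2$.
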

 \begin{proof}
    We use the induction method. From Lemmas \ref{Lem4.5} and \ref{Lem4.2} (c), for $y \in \mathbb{R}$,
    \begin{align*}
        \alpha A_{T-1}(y) &= \alpha \sum_{n=-1}^{\infty} \int_{z_n}^{z_{n+1}} [ V_{T-1}(y-\xi) - V_{T-1}(y-z_n) ] dF_{T-2}(\xi) \\
        & \leq \alpha \sum_{n=-1}^{\infty} \int_{z_n}^{z_{n+1}} \bar{\varphi}_{T-1}(\xi -z_n, y-z_n) dF_{T-2}(\xi)\\
        & \leq \alpha \sum_{n=-1}^{\infty} \bar{\varphi}_{T-1}(\theta, y-z_n) f_{T-2}(n)\\
        & \leq \alpha \sum_{n=-1}^{\infty} \gamma_{T-1} \theta f_{T-2}(n) = \alpha \gamma_{T-1} \theta.
    \end{align*}

    Thus, for $z_{k-1} <x \leq z_k$
    \begin{align*}
        \epsilon_{T-2}(x) &= \begin{cases} \alpha A_{T-1}(S_{T-2}),& x<s_{T-2},\\
         \alpha A_{T-1}(x),& x\geq s_{T-2}, \end{cases}\\
         & \leq \alpha \gamma_{T-1} \theta = \bar{\epsilon}_{T-2}(z_k).
    \end{align*}

    That is, the proposition holds for $t=T-2$. Suppose the proposition holds for $t\;(0<t\leq T-2)$. Let $z_{k-1} <x \leq z_k$.

    From Lemma \ref{Lem4.11},
    \begin{align*}
        \epsilon_{t-1}(x) &= \begin{cases} \alpha A_{t}(S_{t-1}) + \alpha E[ \epsilon_t(S_{t-1}-D_{t-1}) ],& x<s_{t-1},\\
         \alpha A_{t}(x) + \alpha E[ \epsilon_t(x-D_{t-1})],& x\geq s_{t-1}, \end{cases}\\
         & \leq \begin{cases} 2\bar{\psi}_{t-1}(\theta,S_{t-1}) - 2 \gamma_{t-1} \theta + \alpha E[ \epsilon_t(S_{t-1}-D_{t-1}) ],& x<s_{t-1},\\
         2 \bar{\psi}_{t-1}(\theta,z_k) - 2 \gamma_{t-1} \theta + \alpha E[ \epsilon_t(x-D_{t-1})],& x\geq s_{t-1}. \end{cases}
    \end{align*}

    From the induction hypothesis and Lemma \ref{Lem4.12}, it is easy to verify that
    \begin{equation*}
        E[ \epsilon_t(S_{t-1}-D_{t-1}) ] \leq \bar{\epsilon}_t (S_{t-1}+\theta) \leq \bar{\epsilon}_t (\max(z_k,S_{t-1})+\theta),
    \end{equation*}
    and
    \begin{equation*}
        E[ \epsilon_t(x-D_{t-1}) ] \leq \bar{\epsilon}_t (z_k+\theta) \leq \bar{\epsilon}_t (\max(z_k,S_{t-1})+\theta),
    \end{equation*}

    Thus, from Lemma \ref{Lem4.2} (d),
    \begin{equation*}
        \epsilon_{t-1}(x) \leq 2\bar{\psi}_{t-1} (\theta, \max(z_k,S_{t-1})) - 2\gamma_{t-1} \theta + \alpha \bar{\epsilon}_t (\max(z_k,S_{t-1})+\theta) = \bar{\epsilon}_{t-1}(z_k).
    \end{equation*}

    That is, the proposition also holds for $t-1$.  \qed
\end{proof}

From Theorem \ref{Thm4.4}, $\epsilon_{0}(x) \leq \bar{\epsilon}_{0}(z_k)$,  $z_{k-1} < x \leq z_k$. That is, $\bar{\epsilon}_{0}(z_k)$ is an upper bound of the error $\epsilon_{0}(x)$. For any $z_k \in Z_{\theta}$, we give the following Algorithm 4.2 finds $\bar{\epsilon}_{0}(z_k)$.

%\section*{Algorithm 4.2}
%\begin{algorithm}\label{Alg4.2}
 \section*{Algorithm 4.2} 
  \begin{description}
   \item[Step 0.] By Definition \ref{Def4.5}, compute $\lambda_0(0)=z_k$, $\lambda_0(1)$, $\lambda_0(2)$, $\cdots$, $\lambda_0(T-3)$. $t \Leftarrow T-3$.
      \item[Step 1.] If $t<0$, go to Step 4. Otherwise, go to Step 2.
      \item[Step 2.] If $s_{t+1} > \max(\lambda_0(t), S_t)+ \theta$, go to Step3.

      If $s_{t+1} \leq \max(\lambda_0(t), S_t)+ \theta$, compute all $\bar{\psi}_{t+1}(\theta, z_j)$ by (\ref{Equ4.3}),   $s_{t+1} \leq z_j \leq \max(\lambda_0(t), S_t)+ \theta$. Go to Step3.
      \item[Step 3.] If $t<T-3$, compute $\bar{\psi}_{t+1}(\theta, \max(\lambda_0(t+1), S_{t+1}))$ by (\ref{Equ4.3}). $t \Leftarrow t-1$ and go to Step 1.

          If $t=T-3$, then $t \Leftarrow t-1$ and go to Step 1.
      \item[Step 4.] Compute $\bar{\psi}_{0}(\theta, \max(\lambda_0(0), S_{0}))$ by  (\ref{Equ4.3}) and $\bar{\epsilon}_0(z_k)$ by Definition \ref{Def4.5}. Stop.
    \end{description}
%\end{algorithm}

It is rather complicated to find $\bar{\epsilon}_0(z_k)$ by Algorithm 4.2. Similar to Subsection \ref{SubSecErr1}, we give two relatively simple upper bounds of $\epsilon_t(x)$.

\begin{theorem}\label{Thm4.5}
    For $z_k \in Z_{\theta}$ and $t=0,1,2,\cdots,T-3$,
    \begin{equation*}
        \bar{\epsilon}_t(z_k) \leq U_t^{\epsilon}(z_k) \leq \bar{U}_t^{\epsilon}(\theta),
    \end{equation*}
    where
    \begin{align*}
        U_t^{\epsilon}(z_k) & \coloneqq \alpha^{T-1-t} \gamma_{T-1} \theta + 2\theta \sum_{i=0}^{T-3-t} \sum_{n=1}^{T-1-t-i} \alpha^{n+i} \gamma_{t+i+n} \\ & \quad \prod_{m=0}^{n-1} F_{t+i+m} (\max(\lambda_t(i),S_{t+i}) + (m+1)\theta - s_{t+i+m+1}),
    \end{align*}
    ( $\lambda_t(i)$ has been defined in Definition \ref{Def4.5}.) and
    \begin{equation*}
        \bar{U}_t^{\epsilon}(\theta) \coloneqq \alpha^{T-1-t} \gamma_{T-1} \theta + 2\theta \sum_{i=0}^{T-3-t} \sum_{n=1}^{T-1-t-i} \alpha^{n+i} \gamma_{t+i+n}.
    \end{equation*}
\end{theorem}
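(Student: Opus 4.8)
The plan is to prove the two inequalities in Theorem~\ref{Thm4.5} by induction on $t$ running backward from $t=T-3$ down to $t=0$, using the recursive structure of $\bar{\epsilon}_t(z_k)$ from Definition~\ref{Def4.5} together with the estimate of Lemma~\ref{Lem4.9}. The first inequality $\bar{\epsilon}_t(z_k) \leq U_t^{\epsilon}(z_k)$ is the substantive one; the second inequality $U_t^{\epsilon}(z_k) \leq \bar{U}_t^{\epsilon}(\theta)$ is immediate since every cumulative distribution function $F_{t+i+m}(\cdot)$ is bounded above by $1$, so each product $\prod_{m=0}^{n-1} F_{t+i+m}(\cdots) \leq 1$, and replacing every such product by $1$ turns $U_t^{\epsilon}(z_k)$ into $\bar{U}_t^{\epsilon}(\theta)$.

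For the first inequality, I would start from the explicit form of $\bar{\epsilon}_t(z_k)$ in Definition~\ref{Def4.5}, namely
\begin{equation*}
    \bar{\epsilon}_t(z_k) = \alpha^{T-1-t}\gamma_{T-1}\theta + \sum_{i=0}^{T-3-t} 2\alpha^i \left[ \bar{\psi}_{t+i}\bigl(\theta, \max(\lambda_t(i), S_{t+i})\bigr) - \gamma_{t+i}\theta \right].
\end{equation*}
The key step is to bound each term $\bar{\psi}_{t+i}(\theta, \max(\lambda_t(i), S_{t+i})) - \gamma_{t+i}\theta$ using Lemma~\ref{Lem4.9} evaluated at $z_j = \max(\lambda_t(i), S_{t+i})$. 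That lemma gives
\begin{equation*}
    \bar{\psi}_{t+i}(\theta, z_j) - \gamma_{t+i}\theta \leq \theta \sum_{n=1}^{T-1-(t+i)} \alpha^n \gamma_{t+i+n} \prod_{m=0}^{n-1} F_{t+i+m}\bigl(z_j + (m+1)\theta - s_{t+i+m+1}\bigr).
\end{equation*}
Substituting $z_j = \max(\lambda_t(i), S_{t+i})$ and carrying the factor $2\alpha^i$ through the sum over $i$ reproduces exactly the double-sum term in $U_t^{\epsilon}(z_k)$, while the leading $\alpha^{T-1-t}\gamma_{T-1}\theta$ term matches verbatim. Since the products run with $n$ from $1$ to $T-1-t-i$ and the indices $\lambda_t(i)$ are precisely those defined in Definition~\ref{Def4.5}, the two expressions line up term by term.

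The main obstacle, such as it is, will be bookkeeping: verifying that the nested-maximum recursion $\lambda_t(i) = \max(\lambda_t(i-1), S_{t+i-1}) + \theta$ is consistent across the definition of $\bar{\epsilon}_t$ and the statement of $U_t^{\epsilon}$, and confirming that the summation ranges and the product index $m$ match up when Lemma~\ref{Lem4.9} is applied at each $i$. Although the theorem statement mentions ``the induction method,'' the inequality in fact follows directly by substituting the Lemma~\ref{Lem4.9} bound into the closed form of Definition~\ref{Def4.5}; the monotonicity in Lemma~\ref{Lem4.12} guarantees the estimates are applied at the correct (largest) arguments. Hence, once the index alignment is checked, both inequalities follow without further difficulty.
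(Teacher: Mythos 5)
Your proof is correct and is essentially the paper's own argument: the paper's proof is simply ``This follows from Lemma \ref{Lem4.9},'' and your substitution of the Lemma \ref{Lem4.9} bound (at $z_j = \max(\lambda_t(i), S_{t+i})$, which lies in $Z_\theta$ since each $S_{t+i}$ with $t+i \leq T-3$ and each $\lambda_t(i)$ is a grid point) into the closed form of Definition \ref{Def4.5}, together with $F \leq 1$ for the second inequality, fills in exactly those details. Your closing observation that no induction is needed, only direct term-by-term substitution, matches the paper; the appeal to Lemma \ref{Lem4.12} is harmless but unnecessary.
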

\begin{proof}
    This follows from Lemma \ref{Lem4.9}. \qed
\end{proof}

From Theorems \ref{Thm4.4} and \ref{Thm4.5}, for $z_{k-1} < x \leq z_k$,
\begin{equation*}
    \epsilon_0(x) \leq \bar{\epsilon}_0(z_k) \leq U_0^{\epsilon}(z_k) \leq \bar{U}_0^{\epsilon}(\theta).
\end{equation*}
That is, $U_0^{\epsilon}(z_k)$ and $\bar{U}_0^{\epsilon}(\theta)$ are the two rather simple upper bounds of the error $\epsilon_0(x)$. Furthermore, $\bar{U}_0^{\epsilon}(\theta)$ is independent of $x$ and $\bar{U}_0^{\epsilon}(\theta) \rightarrow 0$ as $\theta \rightarrow 0$. The latter will help us to a convergence result for the approximate optimal policy $\pi$.

\subsection{An Upper Bound of the Relative Error between $v^{\pi}_0(x)$ and $v_0(x)$}
\label{SubSecErr3}

In this subsection,
 the error bounds between $v^{\pi}_0(x)$  and the optimal cost $v_0(x)$  are given by Theorem 4.6,   which is the main contribution in this paper. From the results of Sections 4.1 and 4.2 and  Theorem 4.6, we prove that the approximate policy $\pi$  found by Algorithm 3.1 converges to an optimal policy.  An upper bound of the relative error is given in Formula (\ref{Equ4.9}), and  an approximation of the optimal cost  $v_0(x)$   can be found by Algorithm 4.3.  Furthermore,   we can obtain an upper bound of the relative error. In the end, several numerical examples show that the performance of several algorithms in this paper is satisfactory(see Examples 4.1, 4.2 and 4.3).

\begin{theorem}\label{Thm4.6}
     (one of the main results). For $z_{k-1} < x \leq z_k$,
     \begin{equation}\label{Equ4.7}
        \left| v^{\pi}_0(x) - v_0(x) \right| \leq \bar{\omega}_0(z_k) + \bar{\epsilon}_0(z_k) \leq U^{\omega}_0(z_k) + U^{\epsilon}_0(z_k) \leq \bar{U}^{\omega}_0(\theta) + \bar{U}^{\epsilon}_0(\theta).
     \end{equation}
\end{theorem}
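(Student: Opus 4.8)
The plan is to assemble the statement from the error decomposition and the upper bounds already established at the single time index $t=0$, so the proof is essentially a chain of inequalities. First I would invoke Theorem~\ref{Thm4.1}, which supplies the exact identity $v_0^{\pi}(x) - v_0(x) = V_0(x) - V^*_0(x) + \epsilon_0(x)$ together with the non-negativity $v_0^{\pi}(x) - v_0(x) \geq 0$. The non-negativity lets me discard the absolute value immediately, so that $|v_0^{\pi}(x) - v_0(x)| = \bigl(V_0(x) - V^*_0(x)\bigr) + \epsilon_0(x)$, reducing the problem to bounding the two summands separately.

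Next I would bound each summand by its estimate evaluated at the grid point $z_k$. For the first summand, Theorem~\ref{Thm4.2} gives $V_0(x) - V^*_0(x) \leq \bar{\omega}_0(x)$; since $x \leq z_k$ and $\bar{\omega}_0$ is increasing by Lemma~\ref{Lem4.6}, I may replace $\bar{\omega}_0(x)$ by $\bar{\omega}_0(z_k)$. For the second summand, Theorem~\ref{Thm4.4}, applied with the same $k$ fixed by $z_{k-1} < x \leq z_k$, yields $\epsilon_0(x) \leq \bar{\epsilon}_0(z_k)$ directly. Adding these two gives the first inequality $|v_0^{\pi}(x) - v_0(x)| \leq \bar{\omega}_0(z_k) + \bar{\epsilon}_0(z_k)$. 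The remaining two inequalities are then immediate: Theorem~\ref{Thm4.3} supplies $\bar{\omega}_0(z_k) \leq U^{\omega}_0(z_k) \leq \bar{U}^{\omega}_0(\theta)$ and Theorem~\ref{Thm4.5} supplies $\bar{\epsilon}_0(z_k) \leq U^{\epsilon}_0(z_k) \leq \bar{U}^{\epsilon}_0(\theta)$, and summing termwise reproduces the full chain in (\ref{Equ4.7}).

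I do not expect a genuine obstacle inside this particular argument, because the analytic difficulty has been front-loaded into the results it cites, especially Theorem~\ref{Thm4.2} (whose proof is deferred to the Appendix) and the recursive construction of the estimate functions $\psi_t, \bar{\psi}_t, \bar{\omega}_t, \bar{\epsilon}_t$. Within the present proof the only point requiring care is the monotonicity passage from $x$ to the grid point $z_k$, which is precisely what Lemma~\ref{Lem4.6} provides; one must also ensure that the same $z_k$ determined by $z_{k-1} < x \leq z_k$ is used consistently when invoking Theorems~\ref{Thm4.2}, \ref{Thm4.4}, \ref{Thm4.3} and \ref{Thm4.5}, and that $T \geq 3$ so that Theorem~\ref{Thm4.5} is available at $t=0$.
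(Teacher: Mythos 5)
Your proposal is correct and follows exactly the same route as the paper, whose proof of Theorem~\ref{Thm4.6} is simply the citation ``This follows from Theorems \ref{Thm4.1}--\ref{Thm4.5} and Lemma \ref{Lem4.6}'' — i.e., the decomposition from Theorem~\ref{Thm4.1}, the bounds from Theorems~\ref{Thm4.2} and \ref{Thm4.4} (with Lemma~\ref{Lem4.6} handling the passage from $x$ to $z_k$), and the chains from Theorems~\ref{Thm4.3} and \ref{Thm4.5}. Your added remarks on using a consistent $z_k$ and on needing $T\geq 3$ for Theorem~\ref{Thm4.5} at $t=0$ are careful observations the paper leaves implicit.
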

\begin{proof}
    This follows from Theorems \ref{Thm4.1} - \ref{Thm4.5} and Lemma \ref{Lem4.6}. \qed
\end{proof}

Because $\bar{U}^{\omega}_0(\theta) \rightarrow 0$ and $\bar{U}^{\epsilon}_0(\theta) \rightarrow 0$ as $\theta \rightarrow 0$, by Theorem \ref{Thm4.6}, the approximate optimal policy $\pi$ converges to an optimal policy as $\theta \rightarrow 0$.

The relative error between $v^{\pi}_0(x)$ and the optimal cost $v_0(x)$ is defined as
\begin{equation}\label{Equ4.8}
    R_{\pi}(x) \coloneqq \left| \frac{v^{\pi}_0(x) - v_0(x)}{v_0(x)} \right|, \  v_0(x) \neq 0,  x \in \mathbb{R}.
\end{equation}

Roughly, if $R_{\pi}(x)$ is very small, the policy $\pi$ is a good approximation of the optimal policy.

From (\ref{Equ4.7}) and (\ref{Equ4.8}),
\begin{equation}\label{Equ4.9}
    R_{\pi}(x) \leq \frac{\bar{\omega}_0(z_k) + \bar{\epsilon}_0(z_k)}{|v_0(x)|}, \   v_0(x) \neq 0, \  z_{k-1} <x \leq z_k.
\end{equation}

An upper bound of the relative error  $R_{\pi}(x) $ is given by (\ref{Equ4.9}).

From Algorithms 4.1 and 4.2, we can obtain $\bar{\omega}_0(z_k)$ and $\bar{\epsilon}_0(z_k)$. If $v_0(x)$ is known, we can get an upper bound of the relative error $R_{\pi}(x)$. The following theorem gives an approximation of $v_0(x)$.

\begin{theorem}\label{Thm4.7}
    For $z_{k-1} <x \leq z_k$,
    \begin{equation*}
        -\bar{\omega}_0(z_k) \leq v_0(x) - (V_0(x) -c_0 x) \leq \bar{\epsilon}_0(z_k).
    \end{equation*}
\end{theorem}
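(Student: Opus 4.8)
The plan is to obtain both inequalities directly from the identities and bounds already established, handling the two sides separately. The key observation is that the quantity $v_0(x) - (V_0(x) - c_0 x)$ admits two complementary rewritings: one expressed through the approximate cost $v_0^{\pi}$, which is suited to the upper bound, and one expressed through the optimal-value function $V^*_0$, which is suited to the lower bound. Once these rewritings are in place, each bound reduces to a single application of a prior theorem together with the monotonicity of the majorants.

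For the upper bound, I would invoke Lemma \ref{Lem4.1} at $t=0$, which gives $V_0(x) - c_0 x = v_0^{\pi}(x) - \epsilon_0(x)$, so that $v_0(x) - (V_0(x) - c_0 x) = \bigl(v_0(x) - v_0^{\pi}(x)\bigr) + \epsilon_0(x)$. Theorem \ref{Thm4.1} asserts $v_0^{\pi}(x) - v_0(x) \geq 0$, hence the first bracket is nonpositive and $v_0(x) - (V_0(x) - c_0 x) \leq \epsilon_0(x)$. Since $z_{k-1} < x \leq z_k$, Theorem \ref{Thm4.4} yields $\epsilon_0(x) \leq \bar{\epsilon}_0(z_k)$, which closes the right-hand inequality.

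For the lower bound, I would use the definition (\ref{Equ2.3}), $V^*_0(x) = v_0(x) + c_0 x$, to write $v_0(x) - (V_0(x) - c_0 x) = V^*_0(x) - V_0(x) = -\bigl(V_0(x) - V^*_0(x)\bigr)$. Theorem \ref{Thm4.2} gives $V_0(x) - V^*_0(x) \leq \bar{\omega}_0(x)$, so the expression is at least $-\bar{\omega}_0(x)$; the monotonicity of $\bar{\omega}_0$ established in Lemma \ref{Lem4.6}, together with $x \leq z_k$, gives $\bar{\omega}_0(x) \leq \bar{\omega}_0(z_k)$, and therefore $v_0(x) - (V_0(x) - c_0 x) \geq -\bar{\omega}_0(z_k)$, which is the left-hand inequality.

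I do not expect any genuine obstacle: the statement is essentially a corollary that packages Theorems \ref{Thm4.1}, \ref{Thm4.2} and \ref{Thm4.4} at the single time index $t=0$. The only care required is bookkeeping, namely aligning the cell $z_{k-1} < x \leq z_k$ with the monotone majorants $\bar{\omega}_0$ and $\bar{\epsilon}_0$ evaluated at $z_k$, and tracking the sign change when passing from $V_0 - V^*_0$ to its negative. The whole argument therefore amounts to two short chains of inequalities and introduces no new estimates.
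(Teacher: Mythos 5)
Your proposal is correct and matches the paper's own argument, which likewise derives the upper bound from Lemma \ref{Lem4.1} (via the nonnegativity of $v_0^{\pi}-v_0$) together with Theorem \ref{Thm4.4}, and the lower bound from the identity $V^*_0(x)=v_0(x)+c_0x$ together with Theorem \ref{Thm4.2} and the monotonicity in Lemma \ref{Lem4.6}. The only cosmetic difference is that you cite Theorem \ref{Thm4.1} for $v_0 \leq v_0^{\pi}$, which the paper treats as implicit optimality.
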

\begin{proof}
    This follows from Lemma \ref{Lem4.1}, Theorem \ref{Thm4.4}, Theorem \ref{Thm4.2} and Lemma \ref{Lem4.6}. \qed
\end{proof}

The estimate of the error between $v_0(x)$ and $V_0(x) -c_0 x$ is given in Theorem \ref{Thm4.7}. Thus, we can get the approximate value $V_0(x) -c_0 x$ of $v_0(x)$ by computing $V_0(x)$. Furthermore, an upper bound of the relative error $R_{\pi}(x)$ can be found. We give an algorithm  to find $V_0(x)$ below.  Note that $H_t(S_t)$ has been obtained in Algorithm 3.1 for $t=0,1,2,\cdots,T-1$.

% Algorithm 4.3
%\begin{algorithm}\label{Alg4.3}
%     (compute $V_0(x)$).
%\section*{Algorithm 4.3}
%     (compute $V_0(x)$).
 \section*{Algorithm 4.3} 
  \begin{description}
    \item[Step 1.] If $x<s_0$, by (\ref{Equ3.7}), $V_0(x) = H_0(S_0) + K_0$ and stop. If $x \geq s_0$, go to Step 2.
       \item[Step 2.] Compute $\sigma_1$, $\sigma_2$, $\cdots$, $\sigma_{T-1}$, where
           \begin{equation*}
            \sigma_1 \coloneqq \begin{cases} 1,& x \geq s_1 -\theta,\\
             0,& x < s_1 -\theta,\end{cases}
           \end{equation*}
           and for $k=1,2,\cdots,T-2$,
           \begin{equation*}
            \sigma_{k+1} \coloneqq \begin{cases} 1,& \sigma_k=1 \; \textrm{and} \; x \geq s_{k+1} - z_{k+1},\\
             0,& \textrm{otherwise},\end{cases}
           \end{equation*}

           (Note that if $\sigma_k=0$, then $\sigma_m=0$, \   $m\geq k$.)

           $t \Leftarrow T-1$ and go to Step 3.
       \item[Step 3.] If $t=0$, go to Step 4. If $t>0$ and $\sigma_t=0$, $t \Leftarrow t-1$ and go to Step 3.

           If $t>0$ and $\sigma_t=1$, compute all $H_t(x-z_n)$ by (\ref{Equ3.12}), \  $-z_t \leq z_n \leq z_m$, where $z_m \leq x-s_t < z_{m+1}$. $t \Leftarrow t-1$ and go to Step 3.
       \item[Step 4.] Compute $H_0(x)$ by (\ref{Equ3.12}), then $V_0(x) = H_0(x)$. Stop.
     \end{description}
%\end{algorithm}

Examples below exhibit the performance of  Algorithms in this paper. In the examples, the demand distribution functions are generated from the normal, uniform and gamma distributions, respectively.

%Example \ref{Exm4.1} below exhibits how to find an upper bound of the relative %error $R_{\pi}(x)$ using the above algorithms.

\begin{example}\label{Exm4.1}
    Consider a finite horizon inventory system with $T \leq 30 $, $c_{T}=5 $ and the discounted factor $\alpha =1$. The system parameters are as follows: the unit ordering cost $c_t=5$, the unit holding cost $h_t=0.5$, the unit penalty cost $p_t=12$, and the setup cost $K_t=48$,
 for $t=0,1,2,\cdots,T-1$.

    In period $t$, the demand distribution function is
    \begin{equation*}
        F_t(x) \coloneqq \begin{cases} 0,& x<0,\\
         \left(\bar{F}_t(x) - \bar{F}_t(0)\right)/\left(1 - \bar{F}_t(0)\right),& x\geq 0,\end{cases}
     \end{equation*}
    where $\bar{F}_t(x)$ is the normal distribution function with mean $\mu_t$ and standard deviation $\sigma_t$. The mean $\mu_t$ is given in Table 2, and $\sigma_t = \mu_t/5$. 
    
     % \ref{Table4.1}.
     
      \begin{table}
    \centering
    \caption{Mean demand}
   % \label{Table4.5}
    \begin{tabular}{cccccccccccccccc}
    \hline\noalign{\smallskip}
    $t$ & 0 & 1 & 2 & 3 & 4 & 5 & 6 & 7 & 8 & 9 & 10 & 11 & 12 & 13 & 14 \\
    \noalign{\smallskip}\hline\noalign{\smallskip}
    $\mu_t$ & 110 & 40 & 10 & 62 & 12 & 80 & 122 & 130 & 123 & 32 & 13 & 61 & 15 & 87 & 120 \\
    \noalign{\smallskip}\hline\noalign{\smallskip}
    $t$ & 15 & 16 & 17 & 18 & 19 & 20 & 21 & 22 & 23 & 24 & 25 & 26 & 27 & 28 & 29\\
    \noalign{\smallskip}\hline\noalign{\smallskip}
    $\mu_t$ & 115 & 119 & 38 & 14 & 70 & 14 & 86 & 112 & 127 & 123 & 52 & 8 & 73 & 11 & 75 \\
    \noalign{\smallskip}\hline
    \end{tabular}
    \end{table}

    For $T=10 $ and $\theta=0.1$, we can find an approximate optimal policy $\pi$ (see Table 3) by Algorithm 3.1. For an initial inventory level $x_0=0$, we can find $\bar{\omega}_0(0)= 46.501 $, $\bar{\epsilon}_0(0) = 24.463 $ and $V_0(0) =4112.9 $ by Algorithms 4.1, 4.2 and 4.3, respectively.

    \begin{table}
    \centering
    \caption{The approximate optimal policy $\pi = \{ (s_t,S_t)|t=0,1,2,\cdots,9 \}$}
    \label{Table4.2}
    \begin{tabular}{ccccccccccc}
    \hline\noalign{\smallskip}
    $t$ & 0 & 1 & 2 & 3 & 4 & 5 & 6 & 7 & 8 & 9 \\
    \noalign{\smallskip}\hline\noalign{\smallskip}
    $s_t$ & 123.1 & 41.6 & 6.5 & 66.7 & 7.9 & 82.8 & 132 & 141.6 & 138.9 & 28.718 \\
    $S_t$ & 166.3 & 59.7 & 94.9 & 88.3 & 16.2 & 108 & 164.7 & 175.5 & 174.7 & 43.204 \\
    \noalign{\smallskip}\hline
    \end{tabular}
    \end{table}

    By Theorem \ref{Thm4.7},
    \begin{equation}\label{Equ4.10}
        v_0(0) \geq V_0(0) - \bar{\omega}_0(0) =4066.399.
    \end{equation}

    From (\ref{Equ4.9}) and (\ref{Equ4.10}), an upper bound of the relative error $R_{\pi}(0)$ is given as
    \begin{equation*}
        R_{\pi}(0) \leq \frac{\bar{\omega}_0(0) + \bar{\epsilon}_0(0)}{V_0(0)-\bar{\omega}_0(0)} = 1.75\%.
    \end{equation*}

    Similarly, we can obtain the approximate policies and the upper bounds of the relative error for different values of $T$ and $\theta$. Results are given  in Tables 4 and 5.  
   %(see Table \ref{Table4.3}).
 \end{example}

\begin{example}\label{Exm4.2}
    Consider a finite horizon inventory system, whose parameters are the same as the inventory system in Example 4.1.
    
     In period $t$, the demand distribution function is

\begin{equation*}
        F_t(x) \coloneqq \begin{cases} 0,& x<0,\\
         x/(2\mu_t),& 0\leq x\leq 2\mu_t,\\
         1,& x>2\mu_t,\end{cases}
    \end{equation*}
    where $\mu_t$ is given in Table 2. Obviously, $F_t(x)$ is a uniform distribution function, and the mean demand is $\mu_t $.

Similarly to Example 4.1, we can obtain the approximate policies and the upper bounds of the relative error for different values of $T$ and $\theta$. Results are given  in Tables 4 and 5.
\end{example}

\begin{example}\label{Exm4.3}
    Consider a finite horizon inventory system, whose parameters are the same as the inventory system in Example 4.1.
    
     In period $t$, the demand distribution function $F_t(x)$ is a gamma distribution function with parameters $\alpha_t$ and $\beta_t$, where $\alpha_t$=25, $\beta_t$=25/$\mu_t$  and $\mu_t$  is given in Table 2. Obviously, the mean demand is $\mu_t $.  Results are given  in Tables 4 and 5.

\end{example}

\begin{table}
    \centering
    \caption{Results for different values of $T$ and $\theta$ }
    \label{Table4.12}
    \begin{tabular}{lccccc}
    \hline\noalign{\smallskip}
    Example 4.1 \ \ \ \ \ \ \ \ \ \ \ \ \ \ T= & 10 & 15 & 20 & 25 & 30\\
    \noalign{\smallskip}\hline\noalign{\smallskip}
    \ \ \ \ \ \ \ \ \ \ \ \ \ \ \ \ \ \ $\theta$  & 0.1 & 0.09 & 0.09 & 0.1 & 0.09\\
    CPU time(second) for computing  \\the approximate policy & 4.3 & 7.8 & 10.7 & 11.6 & 16.8\\
   % \noalign{\smallskip}\hline\noalign{\smallskip}
    The upper bound of the relative error & 1.75\% & 2\% & 1.87\% & 1.93\% & 1.97\% \\
    CPU time(second) for computing \\the upper bound of the relative error & 4.1 & 8.4 & 11.3 & 12.1 & 17.1 \\
    \noalign{\smallskip}\hline
    Example 4.2 \ \ \ \ \ \ \ \ \ \ \ \ \ \ T= & 10 & 15 & 20 & 25 & 30\\
    \noalign{\smallskip}\hline\noalign{\smallskip}
    \ \ \ \ \ \ \ \ \ \ \ \ \ \ \ \ \ \ $\theta$ & 0.1 & 0.09 & 0.09 & 0.1 & 0.09 \\
    CPU time(second) for computing \\the approximate policy & 1.8 & 3.7 & 5 & 5.3 & 7.9\\
    The upper bound of the relative error & 1.65\% & 1.98\% & 1.9\% & 1.97\% & 2\% \\
    CPU time(second) for computing \\the upper bound of the relative error & 1.6 & 3.9 & 5.8 & 4.9  & 9.5 \\
    \noalign{\smallskip}\hline
    Example 4.3 \ \ \ \ \ \ \ \ \ \ \ \ \ \ T= & 10 & 15 & 20 & 25 & 30\\
    \noalign{\smallskip}\hline\noalign{\smallskip}
    \ \ \ \ \ \ \ \ \ \ \ \ \ \ \ \ \ \ $\theta$ & 0.1 & 0.09 & 0.09 & 0.1 & 0.09 \\
    CPU time(second) for computing  \\the approximate policy & 3.4 & 6.1 & 8.4 & 8.9 & 13.4 \\
    The upper bound of the relative error & 1.77\% & 2\% & 1.91\% & 1.97\% & 2\% \\
    CPU time(second) for computing \\the upper bound of the relative error & 1.8 & 3.5 & 4.9 & 4.1 & 7.8 \\
    \noalign{\smallskip}\hline
    \end{tabular}
    \end{table}

 \begin{table}
    \centering
    \caption{Results for $T=30 $}
    \label{Table4.13}
    \begin{tabular}{lccccc}
    \hline\noalign{\smallskip}
    Example 4.1 & $\theta=1$ & $\theta=0.7$ & $\theta=0.5$ & $\theta=0.3$ & $\theta=0.09$ \\
    \noalign{\smallskip}\hline\noalign{\smallskip}
    CPU time(second) for computing  \\the approximate policy & 1.5 & 2.1 & 2.8 & 4.6 & 16.8 \\
    The upper bound of the relative error & 25.69\% & 16.97\% & 11.72\% & 6.8\% & 1.97\%\\
    CPU time(second) for computing \\the upper bound of the relative error & 1.6 & 2.4 & 3 & 4.9 & 17.1 \\
    \noalign{\smallskip}\hline
    Example 4.2 & $\theta=1$ & $\theta=0.7$ & $\theta=0.5$ & $\theta=0.3$ & $\theta=0.09$ \\
    \noalign{\smallskip}\hline\noalign{\smallskip}
    CPU time(second) for computing  \\the approximate policy & 0.5 & 0.6 & 0.9 & 1.5 & 8.1 \\
    The upper bound of the relative error & 25.8\% & 17.18\% & 11.88\% & 6.91\% & 2\%\\
    CPU time(second) for computing \\the upper bound of the relative error & 0.3 & 0.4 & 0.5 & 0.8 & 9.4 \\
    \noalign{\smallskip}\hline
    Example 4.3 & $\theta=1$ & $\theta=0.7$ & $\theta=0.5$ & $\theta=0.3$ & $\theta=0.09$ \\
    \noalign{\smallskip}\hline\noalign{\smallskip}
    CPU time(second) for computing  \\the approximate policy & 1.1 & 1.7 & 2.1 & 3.5 & 13.4 \\
    The upper bound of the relative error & 26.27\% & 17.39\% & 11.98\% & 6.95\% & 2\%\\
    CPU time(second) for computing \\the upper bound of the relative error & 0.5 & 0.7 & 0.9 & 1.5 & 7.8 \\
    \noalign{\smallskip}\hline
    \end{tabular}
    \end{table}

 Examples 4.1, 4.2 and 4.3 show that the computing time increases as T getting longer(the upper bounds of the relative error do not exceed $2\% $. See Table 4). For T=30, as $\theta$ getting smaller, the computing time is getting longer and the upper bounds of the relative error are getting smaller( tend to 0. See Table 5). We note that the longest computation time is: 16.8 seconds( time for computing the approximate policy) and 17.1 seconds(time for computing the upper bound of the relative error )(see results for T=30 and $\theta$=0.09 in Example 4.1). These examples show that the performance of  Algorithms in this paper is satisfactory.

Our computer conguration is as follows. CPU: Intel(R) Core(TM) i7-4720HQ CPU 2.60GHz; Memory:8GB; and Software: Matlab 2014b.

\section{Conclusions}
\label{SecConc}

In this paper, we consider a finite horizon non-stationary inventory system with backlogging and setup costs. In Section 3, we detail Algorithm 3.1 to find an approximate policy. However, Example \ref{Exm3.1} shows that the approximate policy found by Algorithm 3.1 may be unsatisfactory.

In Section 4, we focus on the error analysis of the approximate policy $\pi$ found by Algorithm 3.1. Using the estimate functions $\psi_t$ and $\bar{\psi}_t$ (see Section \ref{SubSecErr1}), we provide the analytical (mathematical) error bounds, which converge to zero, between the cost $v^{\pi}_0(x)$ of the approximate policy $\pi$ and the optimal cost $v_0(x)$ (Theorems 4.2,4.4 and 4.6), which are the main contributions of this paper. To the best of our knowledge, the error bound results are not found in the literature on finite horizon inventory systems with setup costs.
From the error analysis results, we prove that the approximate policy $\pi$  converges to an optimal policy(Section \ref{SubSecErr3}).

Based on the above results of the error analysis, we can get an upper
bound of the relative error between the cost $v^{\pi}_0(x)$ of the approximate policy $\pi$ found by Algorithm 3.1 and the optimal cost $v_0(x)$. Furthermore, we must be able to obtain a satisfactory approximate optimal policy as $\theta $ getting smaller. Examples 4.1, 4.2 and 4.3 reflect these facts. These examples show that the performance of  Algorithms in this paper is satisfactory.

\section*{Appendix}

\section*{{\rm {\bf The proof of Theorem 4.2}}} 
%    The proof of Theorem 4.2
\begin{proof} 
% \begin{The proof of Theorem 4.2}  

% \begin{The proof of Theorem 4.2}  

    We use the induction method. It is easy to verify that the proposition holds for $t=T-1$. Suppose the proposition holds for $t+1\;(0\leq t\leq T-2)$.
    \begin{align*}
        H_t(x) - H^*_t(x) &= \alpha \sum_{n=-1}^{\infty} V_{t+1}(x-z_n)f_t(n) - \alpha E[V^*_{t+1}(x-D_t)]\\
        &= \alpha \sum_{n=-1}^{\infty} \int_{z_n}^{z_{n+1}} \left[ V_{t+1}(x-z_n)-V_{t+1}(x-\xi) \right] dF_t(\xi)\\
        &\qquad + \alpha \sum_{n=-1}^{\infty} \int_{z_n}^{z_{n+1}} \left[ V_{t+1}(x-\xi) - V^*_{t+1}(x-\xi) \right] dF_t(\xi).
    \end{align*}

    From Lemmas \ref{Lem4.3} and \ref{Lem4.2} (c), it holds that for $z_n \leq \xi \leq z_{n+1}$,
    \begin{equation*}
        V_{t+1}(x-z_n)-V_{t+1}(x-\xi) \leq \varphi_{t+1} (\xi-z_n,x-z_n) \leq \varphi_{t+1}(\theta,x-z_n).
    \end{equation*}

    Thus, by Lemma \ref{Lem4.2} (a),
    \begin{align*}
        & \alpha \sum_{n=-1}^{\infty} \int_{z_n}^{z_{n+1}} \left[ V_{t+1}(x-z_n)-V_{t+1}(x-\xi) \right] dF_t(\xi)\\
        & \quad \leq \alpha \sum_{n=-1}^{\infty} \varphi_{t+1}(\theta,x-z_n)f_t(n) \\
        & \quad = \psi_t(\theta,x)-\gamma_t \theta.
    \end{align*}

    From the induction hypothesis and Lemma \ref{Lem4.6}, it holds that for $\xi \geq z_n$
    \begin{equation*}
        V_{t+1}(x-\xi) - V^*_{t+1}(x-\xi) \leq \bar{\omega}_{t+1}(x-\xi) \leq \bar{\omega}_{t+1}(x-z_n).
    \end{equation*}

    Thus,
    \begin{equation*}
        \alpha \sum_{n=-1}^{\infty} \int_{z_n}^{z_{n+1}} \left[ V_{t+1}(x-\xi) - V^*_{t+1}(x-\xi) \right] dF_t(\xi)
        \leq \alpha \sum_{n=-1}^{\infty} \bar{\omega}_{t+1}(x-z_n) f_t(n),
    \end{equation*}
    and
    \begin{equation}\label{Equ4.6}
        H_t(x) - H^*_t(x) \leq \psi_t(\theta,x)-\gamma_t \theta + \alpha \sum_{n=-1}^{\infty} \bar{\omega}_{t+1}(x-z_n) f_t(n) = \omega_t(x).
    \end{equation}
  
   Below we shall prove that

   \begin{align*}
               V_t(x) - V^*_t(x) & \leq \bar{\omega}_t(x).
    \end{align*}

    Four cases are discussed below.

    (a) Let $x<s_t$ and $x<s^*_t$. Let $z_k < S^*_t \leq z_{k+1}$. From Theorems \ref{Thm2.1} and \ref{Thm3.1}, Lemma \ref{Lem4.5}, Formula (\ref{Equ4.6}), Lemmas \ref{Lem4.6}, \ref{Lem4.7}, \ref{Lem4.2} (d) and \ref{Lem4.2} (c),
    \begin{align*}
        V_t(x) - V^*_t(x) &= H_t(S_t) - H^*_t(S^*_t) \\& \leq H_t(z_k) - H^*_t(S^*_t) \\
        &=H_t(z_k) - H_t(S^*_t) + H_t(S^*_t) - H^*_t(S^*_t)\\
        &\leq \bar{\psi}_t(S^*_t-z_k,S^*_t) + \omega_t(S^*_t)\\
        &\leq \bar{\psi}_t(S^*_t-z_k,S^U_t) + \omega_t(S^U_t)\\
        &\leq \bar{\psi}_t(\theta,S^U_t) + \omega_t(S^U_t)\\
        &= \bar{\omega}_t (x).
    \end{align*}

    (b) Let $x<s_t$ and $x\geq s^*_t$. Two cases are discussed below.

    (b1) Let $x<z_{m_0}=s_t-\theta$. Because $H_t(y)$ is a sub-$K_t$-convex function of $y$ (Theorem \ref{Thm3.1}),
% From Theorem \ref{Thm3.1},
    \begin{equation*}
        H_t(z_{m_0}) \leq \tau H_t(x) + (1-\tau) [H_t(S_t)+K_t], \quad 0< \tau \leq 1.
    \end{equation*}

    By Corollary \ref{Coro4.1},
    \begin{equation*}
        H_t(S_t)+K_t \leq H_t(z_{m_0}) \leq \tau H_t(x) + (1-\tau) [H_t(S_t)+K_t].
    \end{equation*}

    Thus,
    \begin{equation*}
        H_t(S_t)+K_t \leq H_t(x).
    \end{equation*}

    From Theorem \ref{Thm2.1}, (\ref{Equ4.6}) and Lemmas \ref{Lem4.6} and \ref{Lem4.2} (b),
    \begin{equation*}
        V_t(x) - V^*_t(x) = H_t(S_t)+K_t - H^*_t(x) \leq H_t(x)-H^*_t(x) \leq \omega_t(x) \leq \omega_t(S^U_t) \leq \bar{\omega}_t(x).
    \end{equation*}

    (b2) Let $z_{m_0} \leq x < s_t = z_{m_0}+\theta$. From Theorem \ref{Thm2.1}, Corollary \ref{Coro4.1}, Lemma \ref{Lem4.5}, (\ref{Equ4.6}), and Lemmas \ref{Lem4.2} (d), \ref{Lem4.6}, and \ref{Lem4.2} (c),
    \begin{align*}
        V_t(x) - V^*_t(x) &= H_t(S_t) + K_t - H^*_t(x) \\
        & \leq H_t(z_{m_0}) - H^*_t(x) \\
        &=H_t(z_{m_0}) - H_t(x) + H_t(x) - H^*_t(x)\\
        &\leq \bar{\psi}_t(x-z_{m_0},x) + \omega_t(x)\\
        &\leq \bar{\psi}_t(x-z_{m_0},S^U_t) + \omega_t(S^U_t)\\
        &\leq \bar{\psi}_t(\theta,S^U_t) + \omega_t(S^U_t)\\
        &= \bar{\omega}_t (x).
    \end{align*}

    (c) Let $x\geq s_t$ and $x< s^*_t$. Let $z_k < S^*_t \leq z_{k+1}$. From Theorem \ref{Thm3.1}, $H_t(S_t) \leq H_t(z_{k+1})$. Because $H_t(y)$ is a sub-$K_t$-convex function of $y$,
% From Theorem \ref{Thm3.1},

    \begin{align*}
        H_t(x) &\leq \tau H_t(s_t) + (1-\tau) [H_t(z_{k+1})+K_t] \\
        & \leq \tau [H_t(S_t)+K_t] + (1-\tau) [H_t(z_{k+1})+K_t] \\
        & \leq \tau [H_t(z_{k+1})+K_t] + (1-\tau) [H_t(z_{k+1})+K_t]\\
        & = H_t(z_{k+1})+K_t, \qquad 0\leq \tau \leq 1.
    \end{align*}

    From Theorem \ref{Thm2.1}, Lemma \ref{Lem4.3}, (\ref{Equ4.6}), and Lemmas \ref{Lem4.2} (d), \ref{Lem4.7}, \ref{Lem4.6}, \ref{Lem4.2} (c) and \ref{Lem4.2} (e),
    \begin{align*}
        V_t(x) - V^*_t(x) &= H_t(x) - H^*_t(S^*_t) - K_t\\
        & \leq H_t(z_{k+1}) - H^*_t(S^*_t) \\
        & = H_t(z_{k+1}) - H_t(S^*_t) + H_t(S^*_t) - H^*_t(S^*_t)\\
        & \leq \psi_t(z_{k+1}-S^*_t,z_{k+1}) + \omega_t(S^*_t)\\
        & \leq \psi_t(z_{k+1}-S^*_t,S^U_t) + \omega_t(S^U_t)\\
        & \leq \psi_t(\theta,S^U_t) + \omega_t(S^U_t)\\
        & \leq \eta_t \leq \bar{\omega}_t (x).
    \end{align*}

    (d) Let $x \geq s_t$ and $x \geq s^*_t$.

    (d1) Let $x\leq S^U_t$. From Theorem \ref{Thm2.1}, (\ref{Equ4.6}), and Lemmas \ref{Lem4.6} and \ref{Lem4.2} (b),
    \begin{equation*}
        V_t(x) - V^*_t(x) = H_t(x) - H^*_t(x) \leq \omega_t(x) \leq \omega_t(S^U_t) \leq \bar{\psi}_t (\theta,S^U_t) + \omega_t(S^U_t) = \bar{\omega}_t(x).
    \end{equation*}

    (d2) Let $x > S^U_t$. From Theorem \ref{Thm2.1} and (\ref{Equ4.6}),
    \begin{equation*}
        V_t(x) - V^*_t(x) = H_t(x) - H^*_t(x) \leq \omega_t(x) \leq \bar{\omega}_t(x).
    \end{equation*}

    To sum up, the proposition holds for $t$.
    \qed
\end{proof}
% \end{The proof of Theorem 4.2}

% BibTeX users please use one of
%\bibliographystyle{spbasic}      % basic style, author-year citations
%\bibliographystyle{spmpsci}      % mathematics and physical sciences
%\bibliographystyle{spphys}       % APS-like style for physics
%\bibliography{}   % name your BibTeX data base

% Non-BibTeX users please use

\end{document}